\numberwithin{equation}{section}
\newtheorem{maintheorem}{Theorem}
\newtheorem{theorem}{Theorem}[section]
\newtheorem{lemma}[theorem]{Lemma}
\newtheorem{proposition}[theorem]{Proposition}
\newtheorem{corollary}[theorem]{Corollary}
\newtheorem{definition}[theorem]{Definition}
\newtheorem{remark}[theorem]{Remark}
\newtheorem{claim}[theorem]{Claim}
\newtheorem{question}[theorem]{Question}
\renewcommand{\Pr}{ \mathrm P}
\newcommand{\ga}{\alpha}
\newcommand{\G}{{\cal G}}
\newcommand{\N}{\mathbb N}
\newcommand{\R}{\mathbb R}
\newcommand{\Z}{\mathbb Z}
\newcommand{\ehl}{\color{black}}
\begin{document}

% Title Page
%\begin{titlepage}
\title{Infinite and Giant Components in the Layers Percolation Model}
\author{Jonathan Hermon
\thanks{Department of Statistics, UC Berkeley. E-mail: {\tt jonathan.hermon@berkeley.edu}.}}
%\thanks{
%Department of Computer Science and Applied Mathematics, Weizmann Institute %of Science, Rehovot 76100,
%Israel. E-mail: {\tt daniel.reichman@weizmann.ac.il}.}
%}
\date{}
%\date{\today}
\maketitle

\begin{abstract}
 In this work we continue the investigation launched in \cite{feige2013layers} of the structural properties of the structural properties of the \emph{Layers model}, a dependent percolation model. Given an undirected graph $G=(V,E)$ and an integer $k$, let $T_k(G)$
denote the random vertex-induced subgraph of $G$, generated by ordering $V$
according to Uniform$[0,1]$ $\mathrm{i.i.d.}$ clocks and including in $T_k(G)$ those vertices
with at most $k-1$ of their neighbors having a faster clock. The distribution
of subgraphs sampled in this manner is called the \emph{layers model with
parameter} $k$. The layers model has found applications in the study of $\ell$-degenerate
subgraphs, the design of algorithms for the maximum independent set problem
and in the study of bootstrap percolation.

We prove that every infinite locally finite  tree $T$ with no leaves, satisfying that the degree of the vertices grow sub-exponentially in their distance from the root, $T_3(T)$ $\mathrm{a.s.}$ has an infinite connected component. In contrast, we show that for any locally finite graph $G$, $\mathrm{a.s.}$ every connected component of $T_2(G)$ is finite.

We also consider random graphs with a given degree sequence and show that if the minimal degree is at least 3 and the maximal degree is bounded, then $\mathrm{w.h.p.}$ $T_3$ has a giant component. Finally, we also consider ${\Z}^{d}$ and show that if $d$ is sufficiently large, then $\mathrm{a.s.}$ $T_4(\Z^d)$ contains an infinite cluster.
\ehl
\end{abstract}

%Keywords
\paragraph*{\bf Keywords: Layers model, dependent percolation, random graphs, EIT.}
{\small
}
%\end{titlepage}
\newpage

% Body

%%%%%%%%%%%%%%%%%%%%%%%%%%%
% Section 1: Introduction %
%%%%%%%%%%%%%%%%%%%%%%%%%%%
\section{Introduction}
Consider the following percolation model. Given a  graph $G=(V,E)$, every vertex $v$ of
$G$ selects independently at random an ``age" $X_v$ from the uniform distribution
$\mathrm{Uniform}[0,1]$. For any $k \in \N$, define $L_k(G)$ to be the set of
those vertices that have exactly $k-1$ younger neighbors. For an integer $k \ge 1$, we call $L_k(G)$ the $k$th \emph{layer} of $G$. The union of the first $k$ layers
is denoted by $T_k(G): = \bigcup_{i=1}^k L_i(G)$. By a slight abuse
of notation we refer to the subgraph induced on $T_k$ also by $T_k$, and
omit $G$ when clear from context.

Note that if a vertex $v$ has $m$ neighbors in $G$, then for any $1 \le i \le m+1$ we have that $v \in L_i(G)$ with probability $\frac{1}{m+1}$. However,  these events are  not independent for different vertices  of distance at
most 2. As standard in percolation models, we say that $v \in V$ is \emph{open} (\emph{closed}) if $v \in T_{k}(G)$ (respectively, $v \notin T_k(G)$), where $k$ is clear from context.

The above procedure for sampling vertices from a graph has several useful
properties which were exploited in the design of algorithms
for finding large independent sets in graphs (e.g.~$\cite{feige2013recoverable}$) and in the study of \emph{contagious sets} for \emph{bootstrap percolation} (e.g.~$\cite{reichman2012new}$). For a list of algorithmic applications of the layers model see the related work section in
$\cite{feige2013layers}$.
In 
$\cite{feige2013layers}$ the treewidth and the size of the largest connected component of $T_3(G)$ and $T_2(G)$ were analyzed for various graphs. 

In this paper we establish
parallel versions of the aforementioned results  from  $\cite{feige2013layers}$ for infinite graphs. We also generalize a theorem in $\cite{feige2013layers}$ concerning random 3-regular graphs to random graphs with more general degree sequences (see Theorem \ref{thm : randomgraphs}). 

We denote by $T=(V,E,o)$ a rooted-tree with root $o$. This is simply a tree
with some distinct vertex, denoted by $o$. The $r$th level of $T$, denoted by $\ell_r(T)$, is the collection of vertices of $T$ which are at distance $r$ from $o$. We denote the degree of $v \in V$ by $d_v$. In Section \ref{sec: trees} we study $T_3$ on infinite trees and prove the following Theorem.
\begin{maintheorem}
\label{thm: mainresulttreesinto}
Let $T=(V,E,o)$ be a rooted-tree of minimal degree at least 3. If there exist $C >0$ and $1 \le a < \sqrt{4/3}$, such that $\max_{v \in\ell_r(T)}d_v \le Ca^r$ for all $r \ge 0$, then $T_3(T)$ has an infinite cluster $\mathrm{a.s.}$.
\end{maintheorem}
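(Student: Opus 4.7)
I would prove the stronger statement that, with positive probability, the cluster of a fixed root $o$ in $T_3(T)$ is infinite; the theorem then follows by a zero-one argument.

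To establish positive probability, I would build a branching-process lower bound for the cluster of $o$. Explore $T_3(T)$ in breadth-first order from $o$, revealing the age $X_v$ the first time a vertex $v$ is visited. The key simplification is that, conditional on $X_v$ and on the relative order of $X_v$ and $X_{p(v)}$ (where $p(v)$ denotes the parent of $v$), the event $\{v\in T_3\}$ depends only on the ages of $v$'s children and amounts to: at most $2-\mathbf{1}(X_{p(v)}<X_v)$ of the children of $v$ are younger than $v$. This makes the exploration Markovian in the pair $(r,X_v)$. Writing $q^{(r)}(t)$ for a lower bound on the conditional probability that a vertex $v\in\ell_r(T)$ has infinite $T_3$-descent given $X_v=t$ (and a favorable sign of $X_{p(v)}-t$), one obtains a functional recursion
\[
q^{(r)}(t) \;=\; F^{(r)}_t\bigl(q^{(r+1)}\bigr),
\]
where $F^{(r)}_t$ is a binomial sum over the children's ages (i.i.d.\ uniform on $[0,1]$) integrated against $q^{(r+1)}$.

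The main obstacle is to verify that this recursion admits a nontrivial positive solution, i.e.\ that the branching process is supercritical. A naive one-level first moment gives $(d_v-1)\cdot \Pr(v\in T_3)\sim a^r\cdot 3/(Ca^r)$, which is a constant in $r$ and too crude to capture the correct threshold because it ignores the correlations between a vertex's membership in $T_3$ and that of its children. Instead, I would pass to the two-level process (from $v$ to its grandchildren), where the combinatorial branching factor is of order $a^2$ and the $T_3$-survival factor computed by the binomial integration should turn out to be of order $4/3$; the hypothesis $a^2<4/3$ is exactly what tips this product above $1$. Concretely, I expect to concentrate $X_v$ along the explored path on scale $c/d_v$ for a well-chosen constant $c$, so that $\Pr(v\in T_3\mid X_v=c/d_v)$ is bounded below uniformly in $d_v$, and then extract the constant $4/3$ from the explicit binomial computation of the expected number of ``good'' grandchildren.

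The upgrade from positive probability to almost sure is standard: the event $\{T_3(T)\text{ has an infinite cluster}\}$ is invariant under modification of finitely many ages $X_v$ (such a change affects $T_3(T)$ only on the finite set of those vertices together with their neighbors, which can neither create nor destroy an infinite component), so it lies in the tail $\sigma$-algebra of the i.i.d.\ family $(X_v)_{v\in V}$. Kolmogorov's zero-one law then promotes the positive lower bound to probability one.
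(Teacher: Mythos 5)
Your plan is a genuinely different route from the paper's. The paper does not run an exploration/recursion; it defines, for each path $\gamma$ of length $2k-1$, a product event $A_\gamma = \bigcap_i A_i(\gamma)$ over \emph{pairs} $\{\gamma_{2i-1},\gamma_{2i}\}$ (chosen so that the $A_i(\gamma)$ are mutually independent and $A_\gamma\subset\{\gamma\subset T_3\}$), weights each path by $w(\gamma)=\bigl[d_o\prod_{i}(d_{\gamma_i}-1)\bigr]^{-1}$ so that $\sum_\gamma w(\gamma)=1$, and runs a weighted Paley--Zygmund/second-moment argument on $Z_k=\sum_\gamma w(\gamma)\1_{A_\gamma}/\Pr[A_\gamma]$. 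The constant $4/3$ in the paper arises from the explicit lower bound $\Pr[A_i(\gamma)]\ge \tfrac{4}{3(d_{\gamma_{2i-1}}-1)(d_{\gamma_{2i}}-1)}$ (Claim~\ref{claim:minat33}), and the growth hypothesis is consumed in the second-moment estimate \eqref{eq: BcomparedtoAcupA}--\eqref{eq: secondmomenttree5}, where the factor $(3/4)^{q_{\gamma_j}}$ beats the degree growth $a^{q_{\gamma_j}}$. Your intuition that one must pair levels and that the $4/3$ threshold comes from a two-level computation is exactly right, and your zero-one argument at the end matches Lemma~\ref{lem: 0-1infinitecluster}. So far the plan is sound in spirit.

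However, there is a genuine gap in the branching-process route as sketched, and it is precisely the part you flag as ``the main obstacle.'' The tree here is strongly inhomogeneous: the growth bound is only an upper bound $d_v\le Ca^r$, and nothing prevents a vertex of degree $Ca^r$ from having children of degree $3$, or vice versa. Your invariant ``$q^{(v)}(c/d_v,\cdot)\ge\delta$ uniformly'' does not propagate through the recursion: if $v\in T_3$ with $X_v\asymp c/d_v$, the younger children $u$ of $v$ have $X_u$ uniform on $[0,X_v]$, and there is no mechanism forcing $X_u\asymp c/d_u$ when $d_u$ and $d_v$ differ by a large (possibly unbounded) ratio. The two-level heuristic ``branching factor $\asymp a^2$ times survival factor $\asymp 4/3$'' also does not hold vertex-by-vertex: the honest branching factor from $v$ to its grandchildren is $(d_{\gamma_{2i-1}}-1)(d_{\gamma_{2i}}-1)$, which can be anywhere between $4$ and $C^2a^{2r+1}$, and only the \emph{cumulative} product over a path is controlled by the growth hypothesis. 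The paper's weight $w(\gamma)$ is designed exactly to cancel this cumulative degree product, reducing the problem to the uniform per-pair gain of $4/3$; your proposal has no analogous normalization, and without one it is unclear how to prove the recursion has a positive fixed point. You would need either to prove a distributional (rather than pointwise-in-$t$) invariant that is stable under the age re-scaling induced by passing from parent to child, or to reweight the exploration to absorb the degree inhomogeneity --- at which point you are essentially re-deriving the paper's weighted second-moment argument. As stated, the supercriticality step is a real gap, not a routine verification.
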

\begin{remark}
As the following example demonstrates, the degree growth condition in Theorem 1 is necessary, up to the value of $a$.
 Denote $a_n:=2^{2^{2^{n}}}$.
Consider a spherically symmetric tree $T$ rooted at $o$, in which every $v \in \ell_r(T)$ has degree $3-1_{r=0}$ if $r \notin \{a_{n}:n \in \N \} $ and otherwise every $v \in \ell_r(T)$ has degree $|\ell_r(T)|+1$. Then  for every $n \in \N $ we have that $|\ell_{a_n}(T)| = 2^{\sum_{i=1}^na_{i}} =2^{a_{n}(1+o(1))} $. Let $A_n$ be  the event that $\ell_{a_n}(T) \cap T_3 = \emptyset $. By the Borel-Cantelli Lemma $\mathrm{a.s.}$ infinitely many of the events $A_n$ occur, which implies that there are no infinite clusters in $T_3$. 
\end{remark}
It was shown in 
$\cite{feige2013layers}$ that for a bounded degree finite graph $G$ of size
$n$, the largest connected component of $T_2(G)$ is $\mathrm{w.h.p.}$\footnote{With high probability - that is, with probability tending to 1 as $n \to \infty$.} of size at most
$O(\log n)$. In Section \ref{sec: T2} we prove an analogous result in the infinite setup, while dropping the assumption that $G$ is of bounded degree.
\begin{maintheorem}
\label{thm: T2isfinitenonbdddegreecaseintro}
Let $G=(V,E)$ be a locally finite graph with a countable vertex set $V$.
 Then, $$\Pr[T_2(G) \text{ has an infinite connected component}]=0.$$
\end{maintheorem}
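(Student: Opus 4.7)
The plan is to show $T_2(G)$ is a.s.\ a forest whose every connected component is finite. First, $T_2(G)$ is cycle-free: on any cycle in $T_2$ the vertex with the largest clock would have both cycle-neighbors younger, hence at least two younger $G$-neighbors, contradicting its membership in $T_2$. By countable subadditivity over $v\in V$ it then suffices to show $\Pr[|C_v|=\infty]=0$ for each fixed $v$, where $C_v$ denotes the component of $v$ in $T_2(G)$.

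Fix $v$ and suppose $|C_v|=\infty$. Since $C_v$ is then an infinite locally finite tree, K\"onig's lemma yields an infinite self-avoiding ray $v=u_0,u_1,u_2,\ldots$ in $C_v$. For each interior $u_i$, the $T_2$-condition forces at most one of $u_{i-1},u_{i+1}$ to be younger than $u_i$, so the clock sequence $(X_{u_i})$ contains no interior local maximum. Inspecting the sign pattern of consecutive increments $X_{u_{i+1}}-X_{u_i}$, the absence of a $(+,-)$ transition pins the sign sequence to a block of $-$'s followed by a block of $+$'s, so the clock sequence is ``valley-shaped''; since the ray is infinite, at least one of the two monotone half-rays---the initial strictly decreasing portion starting at $v$ or the eventual strictly increasing tail---must be infinite. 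On any such infinite monotone half-ray, at every interior vertex $u_i$ the unique younger $G$-neighbor is forced to be its predecessor on the ray, so all other $G$-neighbors of $u_i$ must carry clock $>X_{u_i}$.

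Using the iterated-integral identity $\int_{0<y_n<\cdots<y_0<1}\prod_{j=0}^n y_j^{a_j}\,dy=\prod_{j=0}^n\bigl(\sum_{l=j}^n a_l+(n-j+1)\bigr)^{-1}$ with $a_j=d_{u_j}-2$ for interior indices (and $a_n=d_{u_n}-1$ at the endpoint), one obtains for any self-avoiding length-$n$ path $R=v,u_1,\ldots,u_n$ in $G$ the bound
\[
\Pr[R\text{ occurs as a monotone ray in }T_2]\le (d_v-1)\prod_{j=0}^n\Bigl(1+\sum_{l=j}^n(d_{u_l}-1)\Bigr)^{-1}.
\]
Each denominator is $\ge n-j+2$, giving a factorial-type $1/(n+2)!$ decay, and is simultaneously $\ge d_{u_j}$, contributing $1/d_{u_j}$-type factors that absorb path multiplicity through high-degree vertices. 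A first-moment argument that interpolates between these two lower bounds while summing over all length-$n$ self-avoiding paths from $v$ then shows $\E[\#\{\text{monotone rays of length }n\text{ in }T_2\text{ from }v\}]\to 0$, whence $\Pr[|C_v|=\infty]=0$.

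The main obstacle is making this first-moment sum converge uniformly in the unbounded-degree setting: the crude bound $\Pr[R\subset T_2]\le 2^n/(n+1)!$ coming only from ``no interior local maximum'', combined with a worst-case path count, breaks down when the number of length-$n$ self-avoiding paths from $v$ grows faster than $(n+1)!$. Success of the argument therefore depends on retaining the full degree-dependent integral bound above and balancing the $n-j+2$ versus $d_{u_l}$ lower bounds carefully when summing over paths.
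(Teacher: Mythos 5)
Your reduction (cycle-freeness of $T_2$, countable subadditivity, K\"onig's lemma, ``no interior local maximum'' forcing a valley-shaped clock sequence, and the consequence that along a monotone segment every interior vertex must be younger than all of its off-path neighbors) is exactly the paper's Lemma \ref{lem:monotone} and the setup of its proof. But there are two problems beyond that point. First, your displayed per-path bound is not justified as stated: two path vertices at distance two along the ray may share an off-path neighbor $w$, and then your integrand counts the constraint ``$w$ is older'' once for each such path vertex, so the iterated integral \emph{under}estimates the true probability of the relaxed event; the inequality sign you wrote does not follow, and in fact for paths with heavily overlapping neighborhoods the correct upper bound is larger than your right-hand side. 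The paper avoids this by working with the sets $T_i(\gamma)$ (the $1$-neighborhood of the path tail, each vertex counted once) and the \emph{exact} identity $\Pr[B_\gamma]=\prod_i|T_i(\gamma)|^{-1}$, and the overlap bookkeeping (the set $m(\gamma)$ and the bounds $|T_n(\gamma')|\ge d_{\gamma_n}+|m(\gamma)|$, $|m(\gamma)|\le d_{\gamma_{n+1}}-1$) is precisely what makes the later summation close.

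Second, and more seriously, the step that constitutes the actual content of the theorem is missing. Summing the per-path bound over all self-avoiding paths from $v$ in the unbounded-degree setting is the whole difficulty, and your proposal leaves it as an acknowledged obstacle (``balancing the $n-j+2$ versus $d_{u_l}$ lower bounds carefully when summing over paths'') without an argument; ``interpolating between the two lower bounds'' is a hope, not a proof, and a naive balance does not obviously converge. The paper resolves this with a weighted first moment: it introduces the weight $\kappa(\gamma)=\prod_{i}\bigl(\tfrac{d_{\gamma_i}}{d_{\gamma_i}-1}\bigr)\bigl(\tfrac{d_{\gamma_i}+d_{\gamma_{i-1}}-1}{d_{\gamma_{i-1}}}\bigr)$ and proves two separate facts: a recurrence in $n$ showing that $\mathbf{E}\bigl[\sum_{\gamma}\kappa(\gamma)1_{B_\gamma}\bigr]$ does not increase when paths are extended (hence is bounded by a constant $M(v)$ uniformly in $n$), and the uniform divergence $\min_{\gamma\in\Gamma'_{v,n}}\kappa(\gamma)\to\infty$, which together force the unweighted first moment to $0$. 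No analogue of this weighting or recurrence appears in your proposal, so the proof is incomplete at its key step. (A further, smaller, point: if you do pursue the case of an infinite strictly decreasing initial segment from $v$, note that the roles of the two endpoints swap, so the anchoring of the nested-neighborhood computation and of any extension recurrence changes; this case needs its own accounting rather than a symmetry appeal.)
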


The only infinite graph considered in $\cite{feige2013layers}$
 is $\Z^2$, for which it was shown that $T_4(\Z^2)$  $\mathrm{a.s.}$ has a unique
infinite connected component (which we also call an \emph{infinite cluster}).

In Section \ref{sec: Zd} we consider $T_{4}(\Z^d)$  for $d>2$ and prove the following theorem.
\begin{maintheorem}
\label{thm: Z^dintro}
 $T_4(\Z^{d})$ has an infinite cluster $\mathrm{a.s.}$, for all sufficiently large  $d$.
\end{maintheorem}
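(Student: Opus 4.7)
The plan is to compare $T_4(\Z^d)$ with a supercritical Bernoulli site percolation on $\Z^d$ via a block-renormalization argument, exploiting the asymptotic $p_c^{\mathrm{site}}(\Z^d) = (1+o(1))/(2d)$ as $d\to\infty$. Fix a constant $c > 1/2$ and let $p := c/d$. Consider the Bernoulli site percolation $\s := \{v \in \Z^d : X_v \le p\}$. For $d$ sufficiently large one has $p > p_c^{\mathrm{site}}(\Z^d)$, so $\s$ a.s.\ contains an infinite cluster. The key observation is that, conditionally on $v \in \s$, the vertex $v$ is also in $T_4$ with high probability: conditional on $X_v = x \le p$, the number of younger neighbors of $v$ is $\mathrm{Bin}(2d, x)$ with mean at most $2dp = 2c$, and a Poisson approximation gives
$$
\Pr[v \notin T_4 \mid v \in \s] \;\xrightarrow[d\to\infty]{}\; \int_0^1 \Pr[\mathrm{Pois}(2cu)\ge 4]\,du,
$$
which can be made arbitrarily small by choosing $c$ only slightly above $1/2$. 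Thus the ``damage set'' $R := \s \setminus T_4$ is a $1$-dependent (in the graph distance of $\Z^d$) random subset of $\s$ with small marginal density.

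Next I would run a block renormalization to argue that the damage does not destroy the infinite cluster. Partition $\Z^d$ into cubes of side $L$ and call a cube \emph{good} if it contains a connected component of $\s \setminus R$ crossing between every pair of opposite faces. Combining (i) a finite-size / Grimmett--Marstrand-type criterion that gives $\s$-crossings through the cube with probability close to $1$ (valid for $p > p_c^{\mathrm{site}}(\Z^d)$ and $L$ large enough), with (ii) a bound showing that the sparse damage $R$ rarely disconnects such a crossing, one obtains that $\Pr[\text{good}]$ can be pushed arbitrarily close to $1$. Good-cube events depend on clocks only in a bounded neighborhood of the cube, so they are finite-range dependent, and the Liggett--Schonmann--Stacey domination theorem provides a stochastic comparison with a supercritical Bernoulli site percolation on the renormalized lattice $\Z^d$. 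This produces an infinite cluster of good cubes; adjacent good cubes share a crossing of $\s \setminus R$ through their common face, so the crossings glue into an infinite cluster of $\s \setminus R \subseteq T_4(\Z^d)$. Since the event \emph{``$T_4(\Z^d)$ has an infinite cluster''} is translation-invariant and the layers model is a measurable function of the i.i.d.\ clocks $(X_v)_{v\in \Z^d}$, ergodicity upgrades a positive-probability statement to the claimed a.s.\ statement.

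The main technical obstacle is step (ii): because $R \subseteq \s$, the damage is positively correlated with the very vertices we wish to keep, so a naive independent-thinning comparison fails. A clean remedy is a two-stage revealing scheme: first reveal the clocks outside a thin tube around a putative $\s$-crossing (pinning down the coarse geometry of $\s$ near the crossing), and then use the conditional independence of the remaining clocks inside the tube to control, by a Chernoff-style estimate, the number of damaged vertices along the crossing. An alternative route, more in the spirit of Theorem \ref{thm: mainresulttreesinto}, would be to first extract an infinite $3$-regular subtree $T \subset \Z^d$ of polynomial growth and carry out a branching-process analysis analogous to the one behind Theorem 1 to produce an infinite $T_3(T)$-cluster, and then argue via a small-clock conditioning that the off-tree neighbors of these vertices contribute $O(1)$ younger neighbors in expectation, so that the cluster survives as a $T_4(\Z^d)$-cluster; this would reroute the high-dimensional input through the tree theorem and sidestep the dependent crossing estimate.
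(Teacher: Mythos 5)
Your approach is genuinely different from the paper's (which runs a weighted second-moment argument over oriented monotone paths in $\Z_+^d$, with FKG/affiliation inequalities and an exponential-intersection-tail estimate encoded in a four-state Markov chain), but as written it has a real gap at its central step. The scheme hinges on the claim that $\Pr[v\notin T_4\mid v\in\mathcal{S}]$ can be made arbitrarily small by taking $c$ slightly above $1/2$. It cannot: since supercriticality forces $c>1/2$, the limit $\int_0^1\Pr[\mathrm{Pois}(2cu)\ge 4]\,du$ is bounded below by the universal constant $\int_0^1\Pr[\mathrm{Pois}(u)\ge 4]\,du>0$. So the damage set $R=\mathcal{S}\setminus T_4$ has a fixed positive density inside $\mathcal{S}$, and along an $\mathcal{S}$-crossing of a side-$L$ cube the expected number of damaged vertices grows linearly in $L$. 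Hence your step (ii) cannot be settled by a Chernoff-style count of damaged vertices in a tube (your two-stage revealing remedy): you must show that the crossing cluster still crosses after deleting a constant density of its vertices, where the deletion is positively correlated with exactly the locally dense parts of $\mathcal{S}$ (a vertex is damaged precisely when it has at least four open younger neighbors), and the percolation is only a constant factor above $p_c^{\mathrm{site}}(\Z^d)\sim 1/(2d)$. That correlated-dilution robustness statement is the heart of the matter and is nowhere proved; the Grimmett--Marstrand finite-size input and the Liggett--Schonmann--Stacey domination only enter after the good-cube probability has been pushed close to $1$, which is precisely what is missing. (A cleaner reduction you could try: $\mathcal{S}\cap T_4$ contains $\{v\in\mathcal{S}: v \text{ has at most }3\text{ neighbors in }\mathcal{S}\}$, a deterministic function of the Bernoulli field; but even this degree-constrained thinning is not a quotable result and still needs a genuine rerouting argument.)

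The fallback sketched in your last paragraph does not repair this. An infinite tree all of whose vertices have degree $3$ cannot be a subgraph of $\Z^d$: its intrinsic balls grow exponentially, while any subgraph path of length $r$ stays within the $\Z^d$-ball of radius $r$, which has only polynomially many vertices. Moreover, even granting some tree, each vertex has $2d-3$ off-tree neighbors, so keeping the expected number of younger off-tree neighbors $O(1)$ would require conditioning the clocks on the tree to be of order $1/d$, which invalidates the $T_3$-tree analysis you want to import from Theorem \ref{thm: mainresulttreesinto}; and an $O(1)$ expectation would not by itself keep the total count of younger neighbors at most $3$ along an infinite cluster. In contrast, the paper sidesteps all of this by restricting to monotone paths, pairing consecutive vertices so that each vertex is allowed at most two younger neighbors off the path, and bounding the second moment via the EIT-type Markov chain on inter-path distances $\{0,2,4,\infty\}$, which also yields the stronger conclusion that $T_4(\Z^d)\cap\Z_+^d$ contains an infinite monotone path.
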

In fact, we prove a stronger assertion than that of Theorem \ref{thm: Z^dintro}. Theorem \ref{thm: Z^d} asserts that  $T_4(\Z^{d})$ contains an open infinite monotone path\footnote{A path w.r.t.~an oriented copy of $\Z^d$.}  $\mathrm{a.s.}$, for all sufficiently large 
$d$. We expect Theorem \ref{thm: Z^dintro} to hold for all $d$, however it seems that even with a more careful analysis, the argument in the proof of Theorem \ref{thm: Z^dintro} cannot be used to prove this, because of the restriction of the argument to monotone paths.
\begin{question}
Does $T_3(\Z^d)$ $\mathrm{a.s.}$ contain a unique infinite connected component for all $d \ge 1$? 
\end{question}
In \cite{feige2013layers} it was shown that for random 3-regular graphs, $\mathrm{w.h.p.}$\footnote{Over the joint distribution of the random 3-regular graph $H$ and $T_3(H)$.} $T_3$ contains a giant component\footnote{
A giant component is a connected component of size $\Omega(n)$}. In Section \ref{sec: expanders} we generalize the aforementioned
result by considering random graphs with more general degree
sequences. Denote $[n]:=\{1,\ldots,n \}$.
\begin{maintheorem}
\label{thm : randomgraphs}
Let $d \ge 3$. Let $G=([n],E)$ be a random graph chosen from the uniform distribution over all  labeled graphs satisfying that the degree of vertex $i$ equals $d_i$ for some sequence of numbers $(d_i : 1 \le i \le n)$ such that  $\sum_i d_i$ is even and  $3 \le d_i \le d$, for all $i \in [n]$. Then there exists a constant $\alpha=\alpha(d)$ such that
$$\Pr[T_3(G) \text{ contains a connected component of size at least } \alpha n]=1-\exp(-\Omega( n)),$$
where the probability is taken jointly over the choice of $G$ and of the ages of the vertices. 
\end{maintheorem}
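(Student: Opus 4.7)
The plan is to combine a joint exploration of the configuration model with the ages, a comparison with a supercritical multitype Galton--Watson branching process, and a concentration step to upgrade the resulting constant probability to $1-\exp(-\Omega(n))$.

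First I would sample $G$ via the configuration model on the degree sequence $(d_i)$, which yields a uniformly random graph with this degree sequence conditioned on simplicity (an event of probability bounded below by a constant depending only on $d$, since the degrees are uniformly bounded). Sample the ages $(X_v)_{v\in[n]}$ independently and uniformly on $[0,1]$, and couple their revelation to the edge exploration: starting from an arbitrary vertex $v_0$, run a BFS in which at each step we expose one unmatched half-edge of a boundary vertex, reveal its uniformly random partner among the currently unmatched half-edges, and reveal the age of the newly reached endpoint. This simultaneously generates $G$ and decides whether each discovered vertex lies in $T_3(G)$.

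Next I would dominate this exploration from below by a supercritical multitype Galton--Watson process. The type of a discovered vertex $u$ records its degree $d_u\in\{3,\ldots,d\}$, its age $X_u$, and (if needed) the number of younger neighbors it has already seen. As long as the explored region has size $o(n)$, the partner of each exposed half-edge is essentially uniform over all half-edges, so newly discovered vertex types are approximately i.i.d.\ according to the natural size-biased distribution. The offspring at $u$ consists of those further neighbors $w$ such that both $u$ and $w$ lie in $T_3$. Since $d_u\ge 3$, each $u\in T_3$ has at least $d_u-2\ge 1$ older neighbors, and for a typical age $X_u$ each older neighbor has a uniformly positive probability of itself lying in $T_3$. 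A direct computation with this age-indexed offspring law shows that the associated mean operator has Perron eigenvalue strictly greater than $1$, uniformly over degree sequences satisfying $3\le d_i\le d$. Standard branching-process/configuration-model comparisons then yield that with probability at least some $c=c(d)>0$, the $T_3$-component of $v_0$ has size at least $\beta n$ for some $\beta=\beta(d)>0$.

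To promote this constant probability to $1-\exp(-\Omega(n))$, I would invoke the standard exponential concentration package for the configuration model: an edge-revealing Doob martingale has bounded increments, so an Azuma/Talagrand-type inequality (in the spirit of Molloy--Reed and Janson--\L{}uczak) shows that the size of the largest component is exponentially concentrated around its mean. An alternative, more hands-on route is a two-round exposure with sprinkling: reveal only a random subset of the half-edge matchings first, run the exploration from $\Theta(\log n)$ starting points in parallel, and then expose the remaining matchings to glue surviving linear-size fragments together. The main obstacle is the dependence between the event $\{u\in T_3\}$ and the events $\{w\in T_3\}$ for $w$ adjacent to $u$, both of which involve the age $X_u$, so that the offspring of $u$ in a naive branching comparison are not independent. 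A two-stage revelation, in which matchings incident to $u$ are uncovered before the ages of the newly discovered neighbors are conditioned on, together with the age-indexed vertex types above, is what makes the comparison honestly supercritical. Verifying that the Perron eigenvalue of the mean-offspring operator exceeds $1$ uniformly in the admissible degree sequences bounded by $d$ is the most delicate step, and it is precisely where the hypothesis $d_i\ge 3$ is used in a quantitative way.
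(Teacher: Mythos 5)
Your approach is genuinely different from the paper's. The paper conditions on the graph looking locally tree-like around a linear fraction of vertices (via a short-cycle bound), invokes the tree-path second-moment estimate of Lemma~\ref{lem: auxforsec6} to show that a linear fraction of designated vertices have locally large $T_3$-components, and then contracts each such component to a super-vertex and applies the Molloy--Reed criterion to an auxiliary multigraph built from a residual perfect matching. You instead propose a direct exploration coupled to a supercritical multitype branching process plus a concentration step. These are different decompositions, and yours would in principle be more self-contained and closer to standard random-graph methodology.

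However, there are two genuine gaps. First, the core supercriticality claim is asserted, not established, and it is exactly the dangerous point. The paper explicitly remarks that the most natural decoupling of the path event (requiring each $\gamma_i$ to be younger than all of its off-path neighbors, the events $I_i(\gamma)$) produces an \emph{exactly critical} process on the $3$-regular tree: branching factor $2$ times success probability $1/2$. To beat criticality, the paper pairs consecutive vertices and uses the carefully chosen events $A_i(\gamma)$ that trade off which of the two can afford a younger neighbor, then verifies in \eqref{eq: firstrowof2.2} that $\Pr[A_i(\gamma)]\ge 4/(3(d_{\gamma_{2i-1}}-1)(d_{\gamma_{2i}}-1))$, giving branching rate $4/3>1$. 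In your framework this corresponds to proving that the Perron eigenvalue of your age-indexed mean operator strictly exceeds $1$, and you flag this as ``the most delicate step'' without carrying it out. Moreover, the ``two-stage revelation'' you sketch does not resolve the real dependency: whether $u$ produces $w$ as an offspring (i.e.\ $w\in T_3$) depends on the ages of all of $w$'s other neighbors, which are exactly the grandchildren; so offspring status and offspring count are coupled. The paper avoids this by demanding a more restrictive, provably decoupling event along pairs. Without an explicit replacement, the branching comparison is not ``honestly supercritical.''

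Second, the concentration step as stated is not sound. The size of the largest component is not a bounded-difference (Lipschitz) function of the configuration-model edge revelations: matching a single pair of half-edges can merge two linear-size fragments, so a naive Azuma bound on an edge-revealing Doob martingale does not give exponential concentration of the largest component size. The sprinkling alternative is also problematic here because $T_3$ is \emph{not monotone} in the edge set --- adding an edge can eject a vertex from $T_3$ --- so revealing additional matchings in round two can destroy, rather than merely connect, the fragments found in round one. The paper sidesteps this nonmonotonicity by deleting a matching $M$ between tree-like vertices, requiring those endpoints to be in $T_2(G')$ (which guarantees membership in $T_3(G)$ once the deleted edge is restored), and then letting Molloy--Reed on the auxiliary multigraph supply the exponential tail. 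If you want to keep your route, you would need a concentration argument tailored to the largest component of a dependent site-percolation on the configuration model, or a sprinkling scheme designed around the nonmonotonicity.
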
 
Note that in Theorems \ref{thm: mainresulttreesinto} and \ref{thm: T2isfinitenonbdddegreecaseintro}
we do not assume that the graphs are of bounded degree. Similarly, in Theorem \ref{thm : randomgraphs} we allow fairly general degree sequences. This aspect of Theorems \ref{thm:
mainresulttreesinto}, \ref{thm: T2isfinitenonbdddegreecaseintro} and \ref{thm
: randomgraphs} is interesting, since a-priori, it is not obvious how to construct a canonical site percolation process, that has the same marginal probabilities as the Layers model, which exhibits such behaviors.

It is interesting to note that ``3" is the critical value for the Layers model, both in the setup of Theorem \ref{thm: mainresulttreesinto}
and of Theorem \ref{thm : randomgraphs} (in the sense that $T_3$ has an infinite cluster ($\mathrm{a.s.}$ in the infinite setting) or a giant component ($\mathrm{w.h.p.}$ in the finite setting)). This is not a coincidence.
Our proof of Theorem \ref{thm : randomgraphs}  rely heavily
on the analysis of the layers model on trees. 
It is well-known that, in some sense, the random graphs considered in Theorem \ref{thm : randomgraphs}, locally look like trees.  
Thus it is natural to interpret the coincidence 
of the critical value ``3"  in  Theorems \ref{thm: mainresulttreesinto} and \ref{thm : randomgraphs} as an instance in which the critical parameter is a ``local property". Such locality is conjectured to hold in greater generality for Bernoulli (independent) Percolation \cite{benjamini2011critical} (the results in \cite{benjamini2011critical} cover  in particular the case of large girth expanders).

\subsection{An overview of our techniques}
\label{sec: overview}
We now present a short informal discussion which summarizes the main ideas in this paper. 

Let $G=(V,E)$ be a graph and $k \in \N$. Let $Y_v$ be the indicator of the event that vertex $v$ belongs to $T_k(G)$. Clearly, if $A \subset V$ and for any distinct $v,u \in A$ we have that $u$ and $v$ are not neighbors and do not have a common neighbor, then $(Y_v)_{v \in A}$ are independent. However, if there exist $u,v \in A$ which are neighbors or have a common neighbor, then typically $(Y_v)_{v \in A}$ are not independent.

It is hard to analyze the possible global effects of such dependencies. To avoid this, it is useful to consider a family of paths in $G$, $\Gamma$, and studying the first and second moments of the number (or more generally, the ``mass" with respect to some measure on $\Gamma$) of paths in $\Gamma$ which are contained in $T_k(G)$. This requires one to deal only with dependencies between vertices along  at most two paths.

A substantial amount of ``paths" techniques  were developed in the study of percolation. The most relevant ones to this work are the second moment method and the \textbf{EIT}\footnote{Exponential intersection tail.} property. 

In Section
\ref{sec: trees} we study $T_3$ on trees. As previously described, using
paths techniques, we reduce the complexity of dependencies we have to deal
with. Exploiting some hidden structure allows us to control the dependencies
along two paths (a similar
hidden structure is exploited also in the proof of Theorem \ref{thm: Z^dintro}). Essentially, we show that for every tree
$T$ as in Theorem \ref{thm: mainresulttreesinto} we have that $T_3(T)$ stochastically
dominates some super-critical \emph{quasi-independent} percolation process
on $T$ (see (\ref{eq:
QB}) for a precise statement).  Unfortunately, the situation is  more involved than that and we cannot use results regarding quasi-independent percolation as a black box. Instead, we perform a weighted second moment calculation.

The study of \emph{quasi-independent}
percolation on trees\footnote{Also known as quasi-Bernoulli percolation.} and its
relations to independent percolation is due to Lyons, \cite{lyons1989ising,lyons1992random}
  (see also Sections $5.3$-$5.4$ in \cite{lyons2005probability}).

The EIT method
was introduced in $\cite{cox1983oriented}$, where it was exploited for showing
that the critical value for oriented independent bond percolation on $\Z^d$
is $d^{-1}+O(d^{-3})$. The method was further extended in \cite{benjamini1998unpredictable}. The novelty of our use of the EIT method is that we apply it in a dependent setup using an auxiliary Markov chain which represents the different type of dependencies between two vertices. This idea can be utilized in some other dependent percolation models in which there is a bounded range of dependencies. 
 
\subsection{Notation and terminology}
Given a graph $G=(V,E)$, the connected component containing $v$ is denoted
by $C(v)$. We define the length of a path $\gamma$ as the number of edges it contains and denote it by $|\gamma|$. By abuse of notation we often identify
a path $\gamma=(\gamma_{1},\gamma_2,\ldots,\gamma_{k})$ with the set $\{\gamma_{1},\gamma_2,\ldots,\gamma_{k}
\}$. Throughout, we denote the $i$th vertex in $\gamma$ by $\gamma_i$ ($1 \le i \le |\gamma|+1$). We say that the path $\gamma$ is simple (or self-avoiding) if $\gamma_i \neq \gamma_j$, for all $i \neq j$. We denote the collection of all simple paths of length $k$ starting from $v \in V$ by $\Gamma_{v,k}$ and the collection of all such infinite simple paths by $\Gamma_{v}$.

Given a pair of vertices $u,v \in V$ their distance  $d_{G}(u,v)$ is defined as the length of the shortest path in $G$ starting at
$u$ and ending at $v$\footnote{If $u$ and $v$ are not connected by any path, then their distance is defined to be $\infty$.}. For $A,B \subseteq V$, the distance of $A$ from $B$ in $G$ is defined to be $d_{G}(A,B):=\inf \{ d_{G}(a,b) : a \in A,b \in B \}$. When $A$ and $ B$ are not disjoint, we define $d_{G}(A,B)=0$. When the underlining graph is clear from context, we simply write $d(A,B)$ and $d(u,v)$. When $A=\{u\}$ we write $d(u,B)$ instead of $d(\{u\},B)$.

For a pair of  vertices $u,v$, we write $u \sim v$ if $d(u,v)=1$ and say that $u$ is \emph{adjacent} to (or a \emph{neighbor} of) $v$. We say that a graph $G=(V,E)$ is \emph{locally finite} if $d_v < \infty$ for all $v \in V$. We say that a vertex $v$ is a \emph{neighbor} of a set $A$ and write $v \sim A$ (or of a path $\gamma$ and write $v \sim \gamma$), if $d(v,A)=1$ (respectively, $d(v,\gamma)=1$). That is, if $v \notin A$ (respectively, $v \notin \gamma$) and there is some $u \in A$ (respectively, $u \in \gamma$) such that $v \sim u$. If $d(v,A)>1$ we write $v \nsim A$. When $A=\{u\}$ we write $v \nsim u$.

We often abbreviate and write $\mathrm{w.p.}$ for ``with probability". We call a random variable which takes the value $1$  $\mathrm{w.p.}$  $p$ and the value $0$  $\mathrm{w.p.}$ $1-p$  a Bernoulli$(p)$ random variable. We denote the indicator of an event $A$ by $1_{A}$.

Several results that are used in the paper are quoted in the appendix (Section
\ref{sec: A}). All of these results are standard, with the exception of Theorem
\ref{thm: affiliation}.
%%%%%%%%%%%%%%%%%%%%%%%%%%%%
% Section 2: Preliminaries %
%%%%%%%%%%%%%%%%%%%%%%%%%%%%
%%%%%%%%%%%%%%%%%%%%%%%%%%%%
% Section 3: Infinite trees %
%%%%%%%%%%%%%%%%%%%%%%%%%%%%
\section{Infinite trees}
\label{sec: trees}
In this section we study $T_3$ on infinite trees and prove Theorem \ref{thm: mainresulttreesinto}.
Fix some simple path $\gamma$ of length $2k-1$ for some $k \ge 1$. 
 We shall dominate the restriction of $T_3$ to $\gamma$ using an auxiliary percolation process which is amenable to relatively neat analysis. For this purpose we partition $\gamma$ into successive pairs,  $\{\gamma_{2i-1},\gamma_{2i} \}$, $i=1,2,\dots,k$.  For each $i$ we define a certain ``good" event $A_i(\gamma)$, depending only on the ages of $ \gamma_{2i-1},\gamma_{2i}
$ and of that of their neighbors which do not lie in $\gamma$. The motivation behind the definitions shall be explain soon.

Let $T=(V,E,o)$ be a locally finite rooted tree. For any  $k \in \N$, let
$\Gamma_{2k-1}$ be the collection
of all self-avoiding paths of length $2k-1$ starting
at $o$. 

\begin{definition}
\label{def: goodpathonatreedef}
Let $T=(V,E,o)$ be a locally finite rooted tree. For any $v \in V \setminus \{o\}$ the parent of $v$, denoted by $\bar v $, is the unique vertex such that $\bar v \sim v $ and $d(o,\bar v)=d(o,v)-1$. Let $\ell_{r}:=\{v \in V:d(o,v)=r
\}$, $r \ge 0$.  Let $\gamma \in \Gamma_{2k-1}$. We define $N_{i}(\gamma):=\{u \sim \gamma_i : u \notin \gamma \}$ to be the set of neighbors of $\gamma_i$ which do not lie in $\gamma$. For every $i \in [k]$ and $j \in[2k]$ we define
\begin{equation*}
\begin{split}
M_{2i-1}(\gamma)&:=|\{u \in N_{2i-1}(\gamma) \cup \{\gamma_{2i} \} : X_{\gamma_{2i-1}}>X_u\}|, \\ M_{2i}(\gamma)&:=|\{u \in N_{2i}(\gamma) \cup \{\gamma_{2i-1} \} : X_{\gamma_{2i}}>X_u\}|,
\\ K_{j}(\gamma)&:=|\{u \in N_{j}(\gamma)  : X_{\gamma_{j}}>X_u\}|.
\end{split}
\end{equation*}
For all $1 < i < k$, let $A_{i}(\gamma)$ be the event that $\max ( M_{2i-1}(\gamma), M_{2i}(\gamma)
 ) \le 1$.  Let $A_{1}(\gamma)$ (resp.~$A_{k}(\gamma)$) be the event that $K_{1}(\gamma) \le 2$, $K_2(\gamma) \le 1$ (resp.~$K_{2k}(\gamma) \le 2$, $K_{2k-1}(\gamma) \le 1$).  We say that a path $\gamma \in \Gamma_{2k-1} $ is \emph{good}, if the event $A_{\gamma}:=\bigcap_{i=1}^k A_{i}(\gamma)$ occurs.  For any path $\gamma \in \Gamma_{k}$ we set
\begin{equation}
\label{Y'gamma}
w(\gamma):= \frac{1}{d_o}\prod_{i=2}^{2k-1}
\frac{1}{d_{\gamma_i}-1} \text{ and } Y_{\gamma}:=\frac{w(\gamma)1_{A_{\gamma}}}{\Pr[A_{\gamma}]}.
\end{equation}
Finally, we say that $o$ is $ k$\emph{-good},
if $\sum_{\gamma \in \Gamma_{2k-1}}Y_{\gamma} \ge \frac{1}{2}$. 
\end{definition} 
Note that by construction, a \emph{good} path must be contained in $T_3$. The idea
behind the definition of a \emph{good} path is that for a fixed $\gamma \in \Gamma_{2k-1}$, the
events $A_1(\gamma),A_2(\gamma),\ldots,A_{k}(\gamma)$ are mutually independent, as they
depend on ages of disjoint sets of vertices (since $T$ is a tree). This avoids dealing with the accumulation of dependencies, which may occur when working with the indicators of the vertices along a certain path belonging to $T_3$.

Fix some path  $\gamma \in \Gamma_{2k-1} $. The analysis would have been much simpler if instead of considering  $A_1(\gamma),A_2(\gamma),\ldots,A_{k}(\gamma)$  we could work
with the events $I_1(\gamma),\ldots,I_{2k}$, where $I_i(\gamma)$ is
the event that $\gamma_i$ is younger than all of its neighbors which do not lie in $\gamma$.
The events $I_{1}(\gamma),\ldots,I_{2k}(\gamma)$
 are independent and for all $i$ we have that $I_{i}(\gamma)$
is contained in the event that $\gamma_i \in T_3$. However,  the $I_i(\gamma)$'s give rise to a critical percolation process
which does not suffice for our purposes.

 We now describe a growth condition that shall be assumed throughout the section. Whenever the minimal degree of $T$ is strictly greater than 2, the condition takes a simple form and coincides with the growth condition appearing in Theorem \ref{thm: mainresulttreesinto}. In particular, our analysis implies the assertion of Theorem \ref{thm: mainresulttreesinto}, but allows us to deal also with vertices of degree 2.

For any $v \in T$ let $\gamma^{(v)}$ be the unique self-avoiding path in $T$ which starts at $o$ and ends at $v$. We define $q_{v}:=|\{1<i < \lfloor  \frac{d(v,o)+1}{2}  \rfloor : \max (d_{\gamma_{2i-1}^{(v)}},d_{\gamma_{2i}^{(v)}})>2 \}|$. Consider the
following condition. There exist $C>0$ and $1 \le a < 4/3 $, such that 
\begin{equation}
\label{eq: growthcondition}
d_{v}-1 \le Ca^{q_{v}} d_o, \text{ for all }v \in V.
\end{equation}
\begin{definition}
For any distinct paths $\gamma , \gamma'\in
\Gamma_{2k-1}$, let  $\gamma \wedge \gamma'$ be the
 furthest vertex from $o$ belonging to both paths. Let $|\gamma \wedge
\gamma'|:=d(o,\gamma \wedge\gamma')+1$ (so $\gamma_{|\gamma \wedge
\gamma'|}=\gamma \wedge \gamma'$). Finally, we define $$\gamma \cap \gamma':=(\gamma_1,\gamma_2,\ldots,\gamma_{|\gamma \wedge
\gamma'|})
\in \Gamma_{|\gamma \wedge
\gamma'|}.$$
\end{definition}
Essentially, we will show that the percolation process defined via the \emph{good}-paths is a quasi-(independent) Bernoulli percolation
process (see $\cite{lyons2005probability}$  section $5.4$ for a precise definition and details). More precisely, we show that there exists an absolute constant $M>0$  such that 
\begin{equation}
\label{eq: QB}
\begin{split}
& \Pr[A_{\gamma} \cap A_{\gamma'}] \ge \frac{\Pr[A_{\gamma}]\Pr[A_{\gamma'}]}{M\Pr[A_{\gamma \cap \gamma'}]}, \text{ for all } \gamma,\gamma' \in \Gamma_{2k-1} \text{ such that }|\gamma \wedge
\gamma'| \text{ is even},
\\ & \Pr[A_{\gamma} \cap A_{\gamma'}] \ge \frac{\Pr[A_{\gamma}]\Pr[A_{\gamma'}]d_{|\gamma \wedge
\gamma'|}}{M\Pr[A_{(\gamma
\cap \gamma' \setminus \{\gamma \wedge
\gamma' \})}]}, \text{ for all } \gamma,\gamma' \in \Gamma_{2k-1} \text{ such
that }|\gamma \wedge
\gamma'| \text{ is odd}.
\end{split}
\end{equation}
Although (\ref{eq: QB}) bears a resemblance to the condition which defines a quasi-Bernoulli percolation  process, because the $A_i(\gamma)$'s are defined in terms of pairs
of vertices, we cannot use results concerning quasi-Bernoulli percolation as a black box. 

Nevertheless, such results are useful in terms of intuition. Moreover, since the main tool utilized in the development of the theory of quasi-Bernoulli percolation is the second moment method, it is only natural to utilize the same technique when attempting to transform our intuition into a rigorous proof.

 As can be seen from (\ref{eq: marginalsforgoodpath'}) and (\ref{eq: firstrowof2.2}) below, in conjunction with the independence of the events  $A_1(\gamma),A_2(\gamma),\ldots,A_{k}(\gamma) $, for each fixed $\gamma \in \Gamma_{2k-1} $, the marginal probabilities of the $A_\gamma$'s  would have implied super-criticality for independent site percolation, at least when $T$ is of bounded degree. Thus, if we actually had a quasi-Bernoulli percolation process in hand, then Theorems 5.19 and 5.24 in $\cite{lyons2005probability}$ (originally from $\cite{lyons1989ising, lyons1992random}$) would have implied that $T_3(T)$ is super-critical (at least when $T$ is of bounded degree).
\begin{theorem}
\label{thm: T3fortrees}
 Let $T=(V,E,o) $ be a rooted tree with no leaves. Assume that condition (\ref{eq: growthcondition}) holds for some $C>0$ and $1<a <4/3$. Then there exists some constant $c_{1}>0$, such that $o$ is $ k$-good with probability at least $c_{1}/d_{o}$,  for all $k>1$.  
\end{theorem}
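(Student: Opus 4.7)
The plan is a weighted second moment argument applied to the random variable $W := \sum_{\gamma \in \Gamma_{2k-1}} Y_{\gamma}$. Since $T$ has no leaves, $w(\gamma)$ is precisely the probability that a non-backtracking random walk of length $2k-1$ from $o$ traces $\gamma$, so $\sum_{\gamma \in \Gamma_{2k-1}} w(\gamma) = 1$ and $\E[W] = \sum_\gamma w(\gamma) = 1$. By the Paley--Zygmund inequality,
\[
\Pr\!\left[o \text{ is } k\text{-good}\right] \;=\; \Pr\!\left[W \ge \tfrac{1}{2}\right] \;\ge\; \frac{1}{4\,\E[W^{2}]},
\]
so the problem reduces to showing $\E[W^{2}] \le C\,d_{o}$ for some absolute constant $C$, uniformly in $k$.

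First I would pin down the marginals $\Pr[A_{\gamma}]$. Because $T$ is a tree, the pairs $\{\gamma_{2i-1},\gamma_{2i}\}$ together with their non-$\gamma$ neighbors occupy pairwise disjoint vertex sets, so $A_{1}(\gamma),\ldots,A_{k}(\gamma)$ are mutually independent and $\Pr[A_{\gamma}] = \prod_{i=1}^{k}\Pr[A_{i}(\gamma)]$. A direct uniform-order-statistic calculation yields $\Pr[A_{i}(\gamma)]$ of order $1/\bigl((d_{\gamma_{2i-1}}-1)(d_{\gamma_{2i}}-1)\bigr)$ for interior pairs, and analogous formulas at the endpoints. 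The sharpened form of this bound is the source of the critical threshold $4/3$ in (\ref{eq: growthcondition}): whenever $\max(d_{\gamma_{2i-1}},d_{\gamma_{2i}})>2$ the product $\Pr[A_{i}(\gamma)]\cdot(d_{\gamma_{2i-1}}-1)(d_{\gamma_{2i}}-1)$ is bounded below by a constant strictly exceeding $3/4$. The factor $d_{o}$ in the target bound $\E[W^{2}]\le C d_o$ originates from the endpoint event $A_{1}(\gamma)$, whose $K_{1}(\gamma)\le 2$ contribution carries an extra $1/d_{o}$.

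The structural heart of the argument is the quasi-independence estimate (\ref{eq: QB}). For paths $\gamma,\gamma'$ diverging at $\gamma_{|\gamma\wedge\gamma'|}$, the pair-events supported entirely on the two divergent tails live in disjoint subtrees and factor exactly. Only the pair straddling the splitting vertex needs care. If $|\gamma\wedge\gamma'|$ is even, the split lies \emph{between} consecutive pairs and factorization extends cleanly through the split; dividing out the common-prefix probability $\Pr[A_{\gamma\cap\gamma'}]$ gives the first line of (\ref{eq: QB}). If $|\gamma\wedge\gamma'|$ is odd, the split falls \emph{within} a pair, so the two copies of the straddling pair-event share one endpoint and its off-path neighbors; conditioning on the ages at the shared vertices shows that the conditional joint law still factors up to a combinatorial correction of size $d_{|\gamma\wedge\gamma'|}$, which is exactly the extra factor appearing in the second line of (\ref{eq: QB}).

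Finally, I would expand
\[
\E[W^{2}] \;=\; \sum_{\gamma,\gamma'} \frac{w(\gamma)w(\gamma')\,\Pr[A_{\gamma}\cap A_{\gamma'}]}{\Pr[A_{\gamma}]\Pr[A_{\gamma'}]},
\]
substitute (\ref{eq: QB}), and stratify by intersection depth $|\gamma\wedge\gamma'|$. For fixed common prefix $\eta$, the flow identity $\sum_{\gamma \supseteq \eta} w(\gamma) = \text{(natural length-}|\eta|\text{ weight of }\eta)$ collapses the inner double sum to an expression of order $(\text{weight of }\eta)^{2}/\Pr[A_{\eta}]$ (with a tame correction in the odd-split case). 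Plugging in the marginal estimates of the second paragraph makes the per-pair multiplicative ratio equal to $\Pr[A_{i}(\eta)]\cdot(d_{\gamma_{2i-1}}-1)(d_{\gamma_{2i}}-1)$, which is at least a constant strictly larger than $3/4$ exactly on the non-trivial pairs (those counted by $q_{v}$). Calibrated against the growth hypothesis (\ref{eq: growthcondition}), the condition $a<4/3$ produces a per-pair geometric ratio bounded away from $1$, the sum over depths converges, and the total is $O(d_{o})$ thanks to the endpoint $1/d_o$ factor in $\Pr[A_1(\eta)]$. The main obstacle will be the careful bookkeeping across the two cases of (\ref{eq: QB}) — in particular, confirming that the extra $d_{|\gamma\wedge\gamma'|}$ factor appearing in the odd case is absorbed by the marginal at the straddling pair rather than destroying the geometric decay, and that the precise constants one obtains match the threshold $4/3$ appearing in the growth hypothesis.
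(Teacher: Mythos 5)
Your proposal follows essentially the same route as the paper: a weighted second moment calculation for $Z_k = \sum_\gamma Y_\gamma$ via Paley--Zygmund, using the tree structure to make $A_1(\gamma),\ldots,A_k(\gamma)$ independent, computing the marginals $\Pr[A_i(\gamma)]$ and a cross-term bound for $\Pr[A_\gamma \cap A_{\gamma'}]$ that factors along disjoint subtrees with a careful treatment of the straddling pair, then stratifying by intersection depth and using the flow identity for $w$ together with the growth condition to get geometric decay and a bound of order $d_o$. One small numerical slip: the key marginal estimate is that $\Pr[A_i(\gamma)]\,(d_{\gamma_{2i-1}}-1)(d_{\gamma_{2i}}-1)\ge 4/3$ on non-trivial pairs (Claim~\ref{claim:minat33} and \eqref{eq: firstrowof2.2}), not merely a constant exceeding $3/4$ --- it is this $4/3$, whose reciprocal $3/4$ appears as the per-pair geometric factor in \eqref{eq: secondmomenttree4''}, that calibrates exactly against the hypothesis $a<4/3$; with only a $3/4$ lower bound the decay $\alpha=3a/4<1$ would not follow. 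You flag this calibration as the remaining bookkeeping, and indeed that is precisely what the paper carries out.
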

\begin{proof}
 Fix some $k >1$. For a path $\gamma \in \Gamma_{2k-1}$ let $Y_{\gamma}$
be as in (\ref{Y'gamma}). Then $\mathbf{E}(Y_{\gamma}) = w(\gamma),
\text{ for
all } \gamma \in \Gamma_{2k-1}$. We define $Z_{k}:=\sum_{\gamma \in \Gamma_{2k-1}}
Y_{\gamma}$. By the definition of $w(\gamma)$ we have that $\sum_{\gamma
\in \Gamma_{2k-1}}w(\gamma)=1$. Hence, $\mathbf{E}[Z_{k}] =1$.
By the Paley-Zygmund inequality, \[\Pr(o \text{ is } k\text{-good})= \Pr(Z_k>1/2) \ge \frac{1}{4\mathbf{E}[Z_k^2]}.\] Thus our task is to estimate $\mathbf{E}[Z_k^2]$ from above.
Note that
\begin{equation}
\label{eq: Zksquaretrees}
\mathbf{E}[Z_k^2]=\sum_{\gamma\in \Gamma_{2k-1}}\sum_{\gamma' \in \Gamma_{2k-1}}\mathbf{E}[Y_{\gamma}
 Y_{\gamma'} ]=\sum_{\gamma\in \Gamma_{2k-1}}\sum_{\gamma' \in \Gamma_{2k-1}}w(\gamma)w(\gamma')\frac{\Pr[ A_{\gamma} \cap A_{\gamma'}]}{ \Pr[ A_{\gamma}]\Pr[A_{\gamma'}]}.
\end{equation}
Let $\gamma\in \Gamma_{2k-1}$. We now calculate $\Pr[A_{\gamma}]$.  Since we are considering
a tree, by construction, the events $A_1(\gamma),A_2(\gamma),\ldots,A_{k}(\gamma) $ depend on ages of disjoint sets of vertices. Hence they are
independent. Denote $r_i(\gamma):=1_{d_{\gamma_{2i-1}}>2}+1_{d_{\gamma_{2i}}>2}$.  Fix some $i \neq 1,k$.  There are three cases in which we can have $\{M_{2i-1}(\gamma) \le 1, M_{2i}(\gamma) \le 1 \}$: \begin{itemize} \item[Case 1:]  $K_{2i-1}(\gamma)=0=K_{2i}(\gamma)$ - That is, both $\gamma_{2i-1}$ and $\gamma_{2i}$ are younger than all of their neighbors (resp.) which do not lie in $\gamma$. Because $N_{2i-1}(\gamma) \cap N_{2i}(\gamma)=\emptyset $, we have that $K_{2i-1}(\gamma)$ and $K_{2i}(\gamma)$ are independent and so  the probability they both equal 0 is $\frac{1}{(d_{\gamma_{2i-1}}-1)(d_{\gamma_{2i}}-1)} $.   \item[Case 2:] $K_{2i-1}(\gamma)=1, X_{\gamma_{2i}}>X_{\gamma_{2i-1}},K_{2i}(\gamma)=0$ - That is,  $\gamma_{2i}$ is younger than all of its neighbors which do not lie in $\gamma$, but older than $\gamma_{2i-1}$, while  $\gamma_{2i-1}$ has exactly one younger neighbor not lying in $\gamma$. There are $d_{\gamma_{2i-1}}-2 $ possibilities for the identity of the youngest member of $N_{2i-1}(\gamma)  $, which in fact has to be the youngest in $N_{2i-1}(\gamma) \cup N_{2i}(\gamma) \cup \{\gamma_{2i-1},\gamma_{2i} \}$ (contributing a $\frac{d_{\gamma_{2i-1}}-2}{d_{\gamma_{2i-1}}+d_{\gamma_{2i}}-2}$ factor), while $\gamma_{2i-1} $ has to be the second youngest in this set (contributing a $\frac{1}{d_{\gamma_{2i-1}}+d_{\gamma_{2i}}-3}$ factor). Conditioned on that, the distribution of the relative order between $N_{2i}(\gamma) \cup \{\gamma_{2i} \} $ is still uniform over all orderings, and so the conditional probability that $\gamma_{2i}$ is the youngest in this set is $\frac{1}{d_{\gamma_{2i}}-1} $. Overall, the probability of Case 2 is $\frac{d_{\gamma_{2i-1}}-2}{(d_{\gamma_{2i-1}}+d_{\gamma_{2i}}-2)(d_{\gamma_{2i-1}}+d_{\gamma_{2i}}
-3)(d_{\gamma_{2i}}-1)} $.
  \item[Case 3:] $K_{2i-1}(\gamma)=0, X_{\gamma_{2i}}<X_{\gamma_{2i-1}},K_{2i}(\gamma)=1$ (same as Case 2, with the roles of $2i-1$ and $2i$ exchanged), which has probability $\frac{d_{\gamma_{2i}}-2}{(d_{\gamma_{2i-1}}+d_{\gamma_{2i}}-2)(d_{\gamma_{2i-1}}+d_{\gamma_{2i}}
-3)(d_{\gamma_{2i-1}}-1)} $.
\end{itemize}
 We get that:
\begin{equation}
\label{eq: marginalsforgoodpath'}
\Pr[ A_i(\gamma)] = \begin{cases}
 \frac{1}{(d_{\gamma_{2i-1}}-1)(d_{\gamma_{2i}}-1)}+\frac{1}{(d_{\gamma_{2i-1}}+d_{\gamma_{2i}}-2)(d_{\gamma_{2i-1}}+d_{\gamma_{2i}}
-3)} \left[\frac{d_{\gamma_{2i-1}}-2}{d_{\gamma_{2i}}-1}+\frac{d_{\gamma_{2i}}-2}{d_{\gamma_{2i-1}}-1} \right]
, & 1<i< k,  \\
\frac{2}{d_{o}(d_{\gamma_{2}}-1)}, & i=1, \\
\frac{2}{(d_{\gamma_{2k-1}}-1)d_{\gamma_{2k}}},    & i=k.
\end{cases}
\end{equation}
\begin{claim}
\label{claim:minat33}
The function $f(x,y)=\frac{(x-1)(x-2)+(y-1)(y-2)}{(x+y-2)(x+y-3)} $ attains its minimum in the domain $D:=\{(x,y) \in \R^2:x,y \ge 3 \} $ at $(x,y)=(3,3)$ and $f(3,3)=1/3$. Whereas $f(x,2)=1-\frac{2}{x}$ and $f(2,y)=1-\frac{2}{y}$.
\end{claim}
\begin{proof}
Observe that since $(x+y)^2 \le 2 x^2+2y^2$, we have that  $$\limsup_{r \to \infty} \sup_{(x,y)\in D:x+y \ge r} f(x,y) =\limsup_{r \to \infty} \sup_{(x,y)\in D:x+y \ge r}\frac{x^2+y^2}{(x+y)^2}=1/2$$   Since  $f(x,y)<1/3<1/2 $ this means that $f$ has to attain a global minimum in the domain  $D$.   
We now verify that there is no solution to $\partial_x f(x,y)=0=\partial_y f(x,y) $ in $D$. Indeed,
$\partial_x f(x,y)=\frac{(y-1)(x^{2}-2x-y^{2}+3y-1)}{(x+y-2)^{2}(x+y-3)^{2}} $ whose only root which is greater or equal to 3 is $x_{0}(y)=1+\sqrt{(y-1)(y-2)}$. By symmetry, if $\partial_y f(x,y)=0$, then $y=y_0(x)=1+\sqrt{(x-1)(x-2)} $. Hence if $\partial_x f(x,y)=0=\partial_y f(x,y) $, we must have that \[x=1+\sqrt{(y-1)(y-2)} =1+\sqrt{(\sqrt{(x-1)(x-2)})(\sqrt{(x-1)(x-2)}-1)},\]
which implies that $x=\sqrt{(x-1)(x-2)}$, a contradiction!  Thus if the minimum is attained at $(x,y)$, we  must have that either $x=3$ or $y=3$ (since the minimum must be attained at the boundary of the domain). By symmetry, we may assume that $y=3$. Minimizing $f(x,3)$ over $x \ge 3$, it is not hard to verify that the minimum is attained at $x=3$. 
\end{proof}
Note that if $1<i< k $ and $\max (d_{\gamma_{2i-1}},d_{\gamma_{2i}})=2$ (i.e.~$r_i(\gamma)=0 $) we get that $\Pr[A_{i}(\gamma)]=1$, while if $r_i(\gamma)>0$ (i.e.~$\max (d_{\gamma_{2i-1}},d_{\gamma_{2i}})>2 $) then by  \eqref{eq: marginalsforgoodpath'} and Claim \ref{claim:minat33}
\begin{equation}
\label{eq: firstrowof2.2}
\Pr[ A_i(\gamma)] \ge \frac{4}{3(d_{\gamma_{2i-1}}-1)(d_{\gamma_{2i}}-1)}, \text{ for all } 1<i< k \text{ such that } r_i(\gamma)>0.
\end{equation}

We now turn to the task of finding an upper bound on $\Pr[A_{\gamma}\cap A_{\gamma'}]$ for $\gamma,\gamma' \in \Gamma_{2k-1}$. Fix some distinct  $\gamma, \gamma' \in \Gamma_k$.  If $|\gamma \wedge \gamma'| \in \{2i-1,2i\}$, we define $B_{\gamma,\gamma'}$
to be the event that for $j \in \{2i-1,2i\}$ we have that  $|\{u : X_{\gamma_{j}}>X_u, u \sim \gamma_j,
u \notin \gamma \cup \gamma' \}| \le 1 $ and that also  $|\{u : X_{\gamma'_{j}}>X_u, u \sim \gamma'_j,
u \notin \gamma \cup \gamma' \}| \le 1$.

Note that $A_i(\gamma)\cap A_i(\gamma')
\subset B_{\gamma,\gamma'}$. Thus if $|\gamma
\wedge \gamma'|=j<2k$, then 
\begin{equation}
\label{eq: secondmomenttree3'}
A_{\gamma} \cap A_{\gamma'}  \subset \left(\bigcap_{i \in [k] \setminus \{\lfloor j/2 \rfloor \}}
A_{i}(\gamma) \right) \cap B_{\gamma,\gamma'}  \cap
\left(\bigcap_{i=\lfloor j/2 \rfloor +1}^{k}
A_{i}(\gamma') \right), 
\end{equation}
where if $|\gamma
\wedge \gamma'|=2k-2$, the rightmost intersection does not appear. Moreover, from the
definition of $B_{\gamma,\gamma'}$, we have that the $2k- \lfloor j/2 \rfloor
$ events appearing in the right hand side of (\ref{eq: secondmomenttree3'})
are jointly
independent, as they depend on ages of disjoint sets of vertices. Consequently,
if $|\gamma
\wedge \gamma'|=j<2k$ then\begin{equation}
\label{eq: secondmomenttree4'}
\frac{\Pr[ A_{\gamma} \cap A_{\gamma'}]}{ \Pr[ A_{\gamma}]\Pr[A_{\gamma'}]}
\le \frac{\Pr[ B_{\gamma,\gamma'}]}{\Pr[ A_{\lfloor
j/2 \rfloor}(\gamma)]\Pr[ A_{\lfloor j/2 \rfloor}(\gamma')]\prod_{0 \le i
< \lfloor
j/2 \rfloor}\Pr[A_{i}(\gamma)]}  .
\end{equation}
We now calculate $\Pr[B_{\gamma,\gamma'}]$. Assume that $|\gamma \wedge \gamma'| \in \{2i-1,2i \}$. If $|\gamma
\wedge \gamma'|=2i$, then $B_{\gamma,\gamma'}$ consists of two independent events, defined in terms of $\gamma_{_{2i-1}}$ and $\gamma_{_{2i}}$, respectively. Similarly, if  $|\gamma
\wedge \gamma'|=2i-1$, then $B_{\gamma,\gamma'}$ consists of three independent
events, defined in terms of $\gamma_{_{2i-1}}$, $\gamma_{_{2i}}$ and $\gamma_{_{2i}}'$, respectively. A simple calculation yields that:
\begin{equation}
\label{eq: B(gamma,gamma')'}
\Pr[ B_{\gamma,\gamma'}] = \begin{cases}
8(d_{ \gamma_{_{2i}} }-1)^{-1}(d_{\gamma'_{2i}}-1)^{-1}(d_{ \gamma_{_{2i-1}} }-2)^{-1}, & |\gamma \wedge \gamma'|=2i-1, i \notin \{1,2k-1\},  \\
4(d_{\gamma_{2i}}-2)^{-1}(d_{ \gamma_{_{2i-1}} }-1)^{-1}, & |\gamma \wedge \gamma'|=2i, i \notin \{1,2k-1\}, \\
8(d_o-1)^{-1}(d_{\gamma_{_{2}}}-1)^{-1}(d_{\gamma'_{_{2}}}-1)^{-1}, &  |\gamma \wedge
\gamma'|=1, \\
4(d_o (d_{\gamma_{_{2}}}-2))^{-1}, & |\gamma \wedge
\gamma'|=2, \\
8((d_{\gamma_{2k-1}}-2)d_{\gamma_{2k}}d_{\gamma'_{2k}})^{-1}, &  |\gamma \wedge
\gamma'|=2k-1. \\
\end{cases}
\end{equation}
Plugging this and  (\ref{eq: marginalsforgoodpath'})-(\ref{eq: firstrowof2.2}) in (\ref{eq: secondmomenttree4'})
yields that if $|\gamma
\wedge \gamma'|=j<2k$, then
\begin{equation}
\label{eq: secondmomenttree4''}
\frac{\Pr[ A_{\gamma} \cap A_{\gamma'}]}{ \Pr[ A_{\gamma}]\Pr[A_{\gamma'}]} \le 8 \cdot (3/4)^{|\{1< m < \lfloor j/2 \rfloor  :r_{m}(\gamma)\neq 0 \}|}(d_{o}/2)\prod_{1
\le i \le j}(d_{\gamma_i}-1).
\end{equation}
Denote $W(v):= \sum_{\tilde \gamma \in \Gamma_k : v \in \tilde
\gamma }w(\tilde \gamma)$ for every $v \in V$ such that $d(v,o) \le 2k-1$. Note
that
\begin{equation}
\label{eq: Wgammaj}
d_o  \prod_{2
\le i \le j-1}(d_{\gamma_i}-1)=\frac{1}{W(\gamma_{j})}, \text{ for all } \gamma \in \Gamma_{2k-1} \text{ and } j
\le 2k.
\end{equation}
 Let $1 \le a < 4/3$ and $C>0$ be as in condition (\ref{eq: growthcondition}). Then $\alpha:=\frac{3a}{4}<1$.
 Recall that
from the definition of $q_v$, from condition (\ref{eq: growthcondition}),
$q_{\gamma_{j}}=|\{1< m < \lfloor j/2 \rfloor    :r_{m}(\gamma)\neq 0\}|$. By condition (\ref{eq: growthcondition}), $d_{\gamma_j}-1 \le Ca^{q_{\gamma_{j}}} $. Hence  $$4 \cdot (d_{\gamma_j}-1)(3/4)^{q_{\gamma_{j}}} \le C_{1} \alpha^{q_{\gamma_{j}}} d_o,$$ This, together with (\ref{eq: secondmomenttree4''})-(\ref{eq:
Wgammaj}), implies that
\begin{equation}
\label{eq: BcomparedtoAcupA}
\begin{split}
& \frac{\Pr[ A_{\gamma} \cap A_{\gamma'}]}{ \Pr[ A_{\gamma}]\Pr[A_{\gamma'}]}\le \frac{C_{1}\alpha^{q_{\gamma_{j}}} d_{o}}{W(\gamma_{j})}, \text{ for all }\gamma,\gamma' \in \Gamma_{2k-1} \text{ such that }|\gamma \wedge \gamma'|=j<2k.
\end{split} 
\end{equation}

It is easy to verify that $\frac{\Pr[ A_{\gamma} \cap A_{\gamma'}]}{ \Pr[ A_{\gamma}]\Pr[A_{\gamma'}]} \le \frac{C_{1}\alpha^{q_{\gamma_{2k}}} d_{o}}{W(\gamma_{2k} )} $ also when $\gamma=\gamma'$. 
 Note for all  $\bar \gamma \in \Gamma_{2k-1}$ we have that $\sum_{\tilde \gamma \in \Gamma_{2k-1} : |\tilde \gamma  \wedge \bar \gamma'|=r} w(\tilde \gamma) \le W(\bar \gamma_{r} )$, for all $r \le 2k$. Hence, by (\ref{eq: BcomparedtoAcupA}) we have that:
\begin{equation}
\label{eq: secondmomenttree5}
\begin{split}
& \sum_{\gamma' \in \Gamma_{2k-1}}\mathbf{E}[Y_{\gamma}  Y_{\gamma'} ]\le C_{1}d_{o}w(\gamma)_{}^{}\sum_{j=1}^{2k} \sum_{\gamma' \in \Gamma_{2k-1} : |\gamma \wedge \gamma'|=j}
w(\gamma')\alpha^{q_{\gamma_{j}}}/W(\gamma_j)
\\ & \le C_{1}d_{o}w(\gamma)[2+ \sum_{3 \le j \le2 k : d_{\gamma_j}>2}
\sqrt{\alpha}^{|\{3\le i \le j-2 : d_{\gamma_{i}}>2\}|}] \le Md_{o} w(\gamma),
\end{split}
\end{equation}
for some constant $M>0$. Hence 
\[\mathbf{E}[Z_k^2]=\sum_{\gamma\in \Gamma_{2k-1}}\sum_{\gamma' \in \Gamma_{2k-1}}\mathbf{E}[Y_{\gamma}  Y_{\gamma'} ]\le\sum_{\gamma\in \Gamma_{2k-1}} M d_{o}w(\gamma)= Md_{o}.\]
Finally, by the Paley-Zygmund inequality, $\Pr(Z_k>1/2) \ge \frac{1 }{4Md_{o}}$.
\end{proof}
\begin{lemma}
\label{lem: 0-1infinitecluster}
Let $G=(V,E)$ be an infinite connected graph. Let $\ell \in \N$. For any $v \in V$, let $Y_v$ be the indicator of the event that $v \in T_{\ell}(G)$. Then the tail $\sigma$-algebra of $(Y_{v})_{v \in V}$ is trivial.  Consequently, $\Pr[A_{\ell}] \in \{0,1\} $, where $A_{\ell}$ is
the event that $T_{\ell}(G)$ has an infinite cluster.
\end{lemma}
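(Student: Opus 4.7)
The plan is to reduce tail-triviality of $(Y_v)_{v\in V}$ to that of $(X_v)_{v\in V}$, since the latter holds immediately by Kolmogorov's $0$-$1$ law as the ages are i.i.d. The key observation is that $Y_v$ is determined by $(X_u)_{u\in N[v]}$, where $N[v]:=\{v\}\cup\{u:u\sim v\}$ is the closed neighborhood of $v$. Because $G$ is locally finite (this is implicitly used throughout the paper; otherwise a vertex of infinite degree is a.s.\ never in any $T_\ell$, and the claim is trivial), each $N[v]$ is finite, and for any finite $F\subset V$ the set $F':=F\cup\{u:u\sim F\}$ is also finite.

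I would then note that for every $v\notin F'$ one has $N[v]\cap F=\emptyset$, so
\[
\sigma(Y_v:v\notin F')\ \subseteq\ \sigma\bigl(X_u:u\in N[v]\text{ for some }v\notin F'\bigr)\ \subseteq\ \sigma(X_u:u\notin F).
\]
Intersecting over all finite $F\subset V$ (equivalently, enumerating $V$ and intersecting over initial segments) yields
\[
\mathcal T_Y\ :=\ \bigcap_{F\text{ finite}}\sigma(Y_v:v\notin F)\ \subseteq\ \bigcap_{F\text{ finite}}\sigma(X_u:u\notin F)\ =:\ \mathcal T_X,
\]
and by Kolmogorov's $0$-$1$ law applied to the i.i.d.\ family $(X_v)_{v\in V}$, the tail $\sigma$-algebra $\mathcal T_X$ is trivial; hence so is $\mathcal T_Y$.

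For the "consequently" assertion I would verify that $A_\ell$ is $\mathcal T_Y$-measurable. Fix a finite $F\subset V$. If $T_\ell$ has an infinite connected component $C$, then $C\setminus F$ is still infinite, so it splits into finitely many components of the induced subgraph $T_\ell\setminus F$, at least one of which is infinite; conversely any infinite component of $T_\ell\setminus F$ is contained in an infinite component of $T_\ell$. Therefore $A_\ell\in\sigma(Y_v:v\notin F)$ for every finite $F$, so $A_\ell\in\mathcal T_Y$ and $\Pr[A_\ell]\in\{0,1\}$.

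The only potential obstacle is purely bookkeeping: making sure the inclusion $\sigma(Y_v:v\notin F')\subseteq\sigma(X_u:u\notin F)$ is stated with the correct "one-step dilation" $F\mapsto F'$, and invoking local finiteness so that $F'$ remains finite. No probabilistic input beyond Kolmogorov's $0$-$1$ law is required.
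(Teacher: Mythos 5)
Your proof is correct and follows essentially the same route as the paper: the tail $\sigma$-algebra of $(Y_v)$ is contained in that of $(X_v)$ because each $Y_v$ depends only on the ages in the (finite) closed neighborhood of $v$, Kolmogorov's $0$-$1$ law gives triviality, and $A_\ell$ is tail-measurable since deleting a finite vertex set does not affect the existence of an infinite cluster (using local finiteness to see that an infinite component minus a finite set still has an infinite component). You have merely written out the verifications the paper leaves as ``readily verified,'' including the one-step dilation $F\mapsto F'$ and the local-finiteness bookkeeping, so no further comment is needed.
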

\begin{proof}
  One can readily verify that the tail $\sigma$-algebra of $(Y_v)_{v \in V}$ is contained in the tail $\sigma$-algebra of $(X_v)_{v \in V}$. Thus it is trivial by Kolmogorov's 0-1 law. It is easy to see that $A_{\ell}$ is in the tail $\sigma$-algebra of $(Y_v)_{v \in V}$.
\end{proof}
\begin{corollary}
\label{thm: mainresulttrees}
Let $T=(V,E,o) $ be a rooted tree with no leaves. Assume that condition
(\ref{eq: growthcondition}) holds. Then
$\Pr[|C_o(T_3)|=\infty] \ge c_{1}/ d_{o}$, where $C_o(T_3)$ is the connected component of $o$ in $T_3$ and $c_1>0$ is as Theorem \ref{thm: T3fortrees}. Consequently, $T_3$ has an infinite cluster $\mathrm{a.s.}$
\end{corollary}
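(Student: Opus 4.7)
The plan is to pass from the finite-scale estimate of Theorem~\ref{thm: T3fortrees} to an infinite-scale one by a simple monotonicity argument, then invoke Lemma~\ref{lem: 0-1infinitecluster} to upgrade positive probability to probability one.

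First I observe that by construction of the good event, whenever a path $\gamma \in \Gamma_{2k-1}$ satisfies $A_\gamma$, every vertex of $\gamma$ lies in $T_3$ (this was noted right after Definition~\ref{def: goodpathonatreedef}: the events $A_i(\gamma)$ force each $\gamma_j$ to have at most two younger neighbors in $T$). In particular, if $o$ is $k$-good then $Z_k \ge 1/2$, so at least one summand $Y_\gamma$ is positive, which means that some $\gamma \in \Gamma_{2k-1}$ is entirely open and hence $o$ is connected in $T_3$ to a vertex at distance $2k-1$.

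Next I define $F_n := \{\text{there exists a simple path of length } n \text{ in } T_3 \text{ starting at } o\}$. The sequence $(F_n)_{n \ge 1}$ is decreasing, because any simple path of length $n+1$ from $o$ contains its length-$n$ prefix, which is itself simple. By the previous paragraph and Theorem~\ref{thm: T3fortrees},
\begin{equation*}
\Pr[F_{2k-1}] \ge \Pr[o \text{ is } k\text{-good}] \ge c_1/d_o \qquad \text{for all } k > 1.
\end{equation*}
Continuity of probability from above then yields
\begin{equation*}
\Pr\Bigl[\bigcap_{n \ge 1} F_n\Bigr] = \lim_{n \to \infty} \Pr[F_n] \ge c_1/d_o.
\end{equation*}
Finally, on the event $\bigcap_n F_n$ the connected component $C_o(T_3)$ contains a vertex at every distance from $o$, so it is infinite. (Equivalently, $C_o(T_3)$ is a locally finite tree in which $o$ has descendants at arbitrarily large depth, and König's lemma supplies an infinite branch from $o$.) This gives the bound $\Pr[|C_o(T_3)| = \infty] \ge c_1/d_o$.

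For the final ``consequently'' statement, $T$ is infinite and connected (being a rooted tree with no leaves and satisfying \eqref{eq: growthcondition}, so $d_o < \infty$), hence Lemma~\ref{lem: 0-1infinitecluster} applies with $\ell = 3$: the event that $T_3(T)$ has an infinite cluster has probability in $\{0,1\}$. Since we have just shown that this probability is at least $c_1/d_o > 0$, it must equal $1$. The main obstacle here was packaged into Theorem~\ref{thm: T3fortrees}; the corollary itself is essentially a soft passage to the limit combined with the $0$-$1$ law.
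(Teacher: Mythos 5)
Your proof is correct and follows essentially the same route as the paper: pass from the finite-scale bound of Theorem~\ref{thm: T3fortrees} to the infinite event by monotone convergence, then upgrade to probability one via Lemma~\ref{lem: 0-1infinitecluster}. The only (harmless) difference is that you take limits along the trivially decreasing events $F_n$ (existence of an open simple path of length $n$ from $o$) rather than the paper's decreasing intersection $\bigcap_{k\ge 2}\{Z_k>0\}$, which neatly sidesteps having to check that goodness is preserved under truncation of paths.
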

\begin{proof}
The event $|C_o(T_3)|=\infty$ contains the decreasing intersection $\bigcap_{ k \ge 2 }\{Z_k>0\}$. So by Theorem \ref{thm: T3fortrees}, $\Pr[|C_o(T_3)|=\infty] \ge c_{1} d_{o}^{-1}$. The proof is concluded using Lemma \ref{lem: 0-1infinitecluster}. \end{proof}
We end the section with a modification of Theorem \ref{thm: T3fortrees} which we shall need in Section \ref{sec: expanders}.
\begin{definition}
\label{def: auxdefforrandomgraphs}
Let $T=(V,E,o)$ be an infinite tree rooted tree with no leaves, satisfying
condition (\ref{eq: growthcondition}). Let  $I \subset V $ be such that $d(u,v)
\ge 15$, for all $u,v \in I$. Assume that $d_v \ge 3$, for all
$v \in V \setminus I$. Let $k \ge 15$ be an odd integer. We say that a path
$\gamma \in \Gamma_k$
is \emph{nice} if it is \emph{good} (i.e.~in the notation of Definition \ref{def:
goodpathonatreedef}, the event $A_{\gamma}=\bigcap_{0 \le i \le  \frac{k-1}{2}}A_i(\gamma)$
occurs) and $\gamma \cap
I \subset T_2$. Let $W_{o,k}$ be the union of all \emph{nice} paths in
$\Gamma_k$.
\end{definition}
\begin{lemma}
\label{lem: auxforsec6}
 There exist some $b>1$ and $c_{2}>0$ (both independent of $k$) such that
\begin{equation}
\label{opengoodpaths}
\Pr \left[\left| W_{o,k} \right| >b^{k} \right]>c_2/d_{0}, \text{ for all odd }k \ge 15.
\end{equation}
\end{lemma}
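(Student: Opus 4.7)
The plan is to run the Paley-Zygmund argument of Theorem \ref{thm: T3fortrees} with the good event $A_\gamma$ refined to the \emph{nice event}
\[
\tilde A_\gamma := A_\gamma \cap \bigcap_{v \in \gamma \cap I}\{v \in T_2(T)\},
\]
so that $\gamma$ is nice iff $\tilde A_\gamma$ occurs. Set $\tilde Y_\gamma := w(\gamma)\mathbf{1}_{\tilde A_\gamma}/\Pr[\tilde A_\gamma]$ and $\tilde Z_k := \sum_{\gamma \in \Gamma_k} \tilde Y_\gamma$, so $\mathbf{E}[\tilde Z_k] = 1$. The bulk of the work will be to show $\mathbf{E}[\tilde Z_k^2] \leq M' d_o$ for a constant $M' = M'(T)$ independent of $k$; Paley-Zygmund then gives $\Pr[\tilde Z_k > 1/2] \geq c_2/d_o$ with $c_2 := 1/(4M')$.

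The crucial input is a uniform lower bound $\Pr[v \in T_2(T) \mid A_\gamma] \geq c_0$ for each $v \in \gamma \cap I$, with $c_0$ an absolute constant (one can arrange $c_0 \geq 1/2$). A priori this is surprising, since the unconditional probability $\Pr[v \in T_2(T)] = 2/(d_v+1)$ may be tiny. The point is that if $v$ lies in pair $i$ of $\gamma$, then $A_i(\gamma)$ already forces $v$ to be the youngest or second-youngest among $d_v$ of its neighbors, so being in $T_2(T)$ only requires $v$ to additionally beat one specific further neighbor (the adjacent $\gamma$-vertex in pair $i \pm 1$), whose age is independent of the data entering $A_i(\gamma)$. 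A case split on ``youngest vs.~second-youngest'' reduces the claim to comparing two nearly-smallest uniform order statistics, where the desired $\Omega(1)$ lower bound is elementary.

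Since $I$ is $15$-separated, the events $\{v \in T_2(T)\}_{v \in \gamma \cap I}$ depend on disjoint sets of ages given $A_\gamma$, yielding $\Pr[\tilde A_\gamma] \geq c_0^{|\gamma \cap I|}\Pr[A_\gamma] \geq c_0^{k/15+1}\Pr[A_\gamma]$. For the second moment, an analogous analysis shows that the ratio $\Pr[\tilde A_\gamma \cap \tilde A_{\gamma'}]/(\Pr[\tilde A_\gamma]\Pr[\tilde A_{\gamma'}])$ exceeds its un-refined counterpart by at most $c_0^{-|\gamma \cap \gamma' \cap I|}$: $I$-vertices past the split $\gamma \wedge \gamma'$ contribute factor $1$ (conditioning on the other path's $A$-event leaves their local neighborhood undisturbed), while each $I$-vertex in the common prefix contributes at most $1/c_0$. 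The additional per-length factor $c_0^{-1/15}$ is then absorbed by the geometric decay $\alpha^{q_{\gamma_j}}$ ($\alpha = 3a/4$) in (\ref{eq: secondmomenttree4''}), so the summation in (\ref{eq: secondmomenttree5}) still delivers the desired bound on $\mathbf{E}[\tilde Z_k^2]$.

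On $\{\tilde Z_k > 1/2\}$, writing $N_{o,k}$ for the number of nice paths,
\[
\tfrac{1}{2} < \tilde Z_k \leq N_{o,k} \cdot \max_{\gamma \in \Gamma_k}\frac{w(\gamma)}{\Pr[\tilde A_\gamma]}.
\]
Combining the elementary bound $w(\gamma)/\Pr[A_\gamma] \leq C(3/4)^{k}$ (from the definition of $w$ and the marginal bound (\ref{eq: firstrowof2.2})) with $\Pr[\tilde A_\gamma] \geq c_0^{k/15+1}\Pr[A_\gamma]$ yields $\max_\gamma w(\gamma)/\Pr[\tilde A_\gamma] \leq C'((3/4)c_0^{-1/15})^k$. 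Since $c_0 \geq 1/2$ and $(1/2)^{1/15} > 3/4$, the base $(4/3)c_0^{1/15}$ exceeds $1$, so $N_{o,k} \geq b^k$ for a suitable $b > 1$. Because distinct simple paths from $o$ in a tree reach distinct depth-$k$ vertices, $|W_{o,k}| \geq N_{o,k} \geq b^k$. The main obstacle will be the uniform lower bound $\Pr[v \in T_2 \mid A_\gamma] \geq c_0$; the spacing constant $15$ in the definition of $I$ is chosen precisely so that the resulting per-$I$-vertex loss $1/c_0$ fits within the geometric decay driving the second moment of good paths.
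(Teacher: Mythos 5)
Your overall architecture (a weighted Paley--Zygmund argument over paths, followed by converting $\{\tilde Z_k>1/2\}$ into a count of nice paths via a uniform upper bound on $w(\gamma)/\Pr[\tilde A_\gamma]$) matches the paper's, but the input you call ``crucial'' is false. You claim $\Pr[v\in T_2(T)\mid A_\gamma]\ge c_0$ for an \emph{absolute} constant $c_0$ (even $c_0\ge 1/2$), on the grounds that the one extra neighbor $v$ must beat has age ``independent of the data entering $A_i(\gamma)$.'' That extra neighbor is $\gamma_{2i-2}$ (or $\gamma_{2i+1}$), and while its age is indeed independent of the block defining $A_i(\gamma)$, it is \emph{not} independent of $A_{i\mp1}(\gamma)$, which is part of $A_\gamma$ and conditions that vertex to be (nearly) the youngest among its own $d_{\gamma_{2i-2}}-1$ outside neighbors; conditionally its age is of order $1/d_{\gamma_{2i-2}}$, not uniform. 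Concretely, take $v\in I$ with $d_v=2$ (permitted: the degree-$\ge 3$ hypothesis is waived on $I$, and this is exactly the situation produced in Section \ref{sec: expanders}, where the $D$-vertices have lost an edge) whose two $\gamma$-neighbors have degree $D$. Conditioned on $A_\gamma$, with probability $1-O(1/D)$ the partner of $v$ in its pair is younger than $v$ (the dominant sub-case of $A_i(\gamma)$ is that the high-degree partner is a local minimum), so $v\in T_2$ then forces $X_v<X_{\gamma_{2i-2}}$, where $X_{\gamma_{2i-2}}$ is conditionally of order $1/D$ while $X_v$ is essentially uniform; hence $\Pr[v\in T_2\mid A_\gamma]=O(1/D)$. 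So $c_0$ is at best of order (maximal degree)$^{-1}$, never an absolute constant, and this destroys your own numerology: the gain per step coming from (\ref{eq: firstrowof2.2}) is $(3/4)^{1/2}$ (note also that $w(\gamma)/\Pr[A_\gamma]$ decays like $(3/4)^{k/2}$, not $(3/4)^{k}$, since there are only $k/2$ pairs), so you need $c_0>(3/4)^{15/2}\approx 0.12$ uniformly; with $c_0\sim 1/d$ this fails for moderate degree bounds, whereas Theorem \ref{thm : randomgraphs} needs every fixed $d\ge3$. The same degradation invalidates the ``absorbed by the geometric decay'' step in your second-moment bound, which likewise prices each $I$-vertex at $1/c_0$.

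The paper circumvents conditioning entirely: at each pair meeting $I$ it replaces $A_i(\gamma)$ by the \emph{smaller} event $D_i(\gamma)$ that the $I$-vertex and its partner are each younger than all their neighbors off $\gamma$ and the $I$-vertex is younger than its partner. This forces the $I$-vertex into $T_2$ deterministically, keeps the events attached to distinct pairs exactly independent (so the second-moment computation of Theorem \ref{thm: T3fortrees} is reused essentially verbatim with $D_\gamma$ in place of $A_\gamma$), and the loss is a multiplicative factor per $I$-pair, absorbed via the $15$-separation in both the second moment and the final count $|W_{o,k}|\ge$ (number of paths with $D_\gamma$) $>b^k$. If you want to salvage your conditional route, you must replace the absolute-constant claim by a degree-dependent lower bound on $\Pr[v\in T_2\mid A_\gamma]$ and verify it is still dominated by the gain from (\ref{eq: firstrowof2.2}); as written, that claim is the missing (and false) ingredient, and the finer assertions in your second-moment paragraph (e.g.\ that $I$-vertices beyond $\gamma\wedge\gamma'$ contribute factor $1$) are also asserted rather than proved.
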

\begin{proof} Let $\gamma=(\gamma_0,\ldots,\gamma_k) \in \Gamma_k$. Let $C_{\gamma}$ be the event that $\gamma$ is \emph{nice}. For any $0 \le i \le \frac{k-1}{2}$ such that $I \cap \{\gamma_{2i},\gamma_{2i+1}\}$ is non-empty, set $f_{i}=2i$ and $g_i=2i+1$ if $\gamma_{2i} \in I$ and set $f_{i}=2i+1$ and $g_i=2i$
if $\gamma_{2i+1} \in I$.  For $0 \le i \le \frac{k-1}{2}$ let $D_{i}(\gamma)=A_{i}(\gamma)$ if $I \cap \{\gamma_{2i},\gamma_{2i+1}\}$ is empty. Otherwise, set $D_{i}(\gamma)$ be the event that $\gamma_{f_i}$ and $\gamma_{g_{i}}$ are both younger than all their neighbors not lying in $\gamma$ (resp.) and that $\gamma_{f_i}$ is younger than $\gamma_{g_{i}}$. A simple calculation, similar to (\ref{eq: marginalsforgoodpath'}), shows that $4\Pr[D_{i}(\gamma)] \ge \Pr[A_{\gamma}]$, for all $i$. Note that $D_{\gamma}:=\bigcap_{0 \le i \le  \frac{k-1}{2}}D_i(\gamma) \subset C_{\gamma}$, and that $D_0(\gamma),D_1(\gamma),\ldots,D_{\frac{k-1}{2}}(\gamma) $ are jointly independent. Consequently,
\begin{equation}
\label{eq: CgammavsAgamma}
\Pr[D_{\gamma}] \ge \Pr[A_{\gamma}]4^{-|A \cap \gamma|}.
\end{equation}
Similarly to (\ref{Y'gamma}), set $w(\gamma):=\left[ d_o\prod_{i=1}^{k-1}
(d_{v_i}-1)\right]^{-1}$, $\bar Y_{\gamma}:=\frac{w(\gamma)1_{D_{\gamma}}}{\Pr[D_{\gamma}]} $ and $\bar Z_k:=\sum_{\gamma \in \Gamma_k}\bar Y_{\gamma}$.

Let $\gamma,\gamma' \in \Gamma_k$ be such that $|\gamma \wedge \gamma'| \in \{2i,2i+1 \} $. If $F_{\gamma,\gamma'}$ be the event that $B_{\gamma,\gamma'} $ occurs and that if $\{\gamma_{2i},\gamma_{2i+1}\} \cap I $ or $\{\gamma'_{2i},\gamma'_{2i+1}\} \cap I $ are non-empty, then $\gamma_{f_i}$
is younger than $\gamma_{g_{i}}$ or $\gamma'_{f_i}$
is younger than $\gamma'_{g_{i}}$, respectively. Then $2\Pr[F_{\gamma,\gamma'}] \ge \Pr[B_{\gamma,\gamma'}] $ and similarly to (\ref{eq: secondmomenttree4'}), we have that
\begin{equation}
\label{eq: secondmomenttreemodified}
\frac{\Pr[ D_{\gamma} \cap D_{\gamma'}]}{ \Pr[ D_{\gamma}]\Pr[D_{\gamma'}]}
\le \frac{\Pr[F_{\gamma,\gamma'}]}{\Pr[ D_{i}(\gamma)]\Pr[ D_{i}(\gamma')]\prod_{0 \le r
<i}\Pr[D_{i}(\gamma)]}  .
\end{equation}
Similarly to the proof of Theorem \ref{thm: T3fortrees},  we have that
$$\mathbf{E}[\bar Z_k^2] \le d_0/4c_{2}, \text{ for some constant }c_{2}>0.$$ Hence by the Paley-Zygmund inequality (Theorem \ref{thm: payleyzygmund}), $\Pr[\bar Z_k>1/2]
\ge c_{2}d_0^{-1}$. Note that by (\ref{eq: CgammavsAgamma}),
  (\ref{eq: marginalsforgoodpath'}) and (\ref{eq: firstrowof2.2})
together with the independence of $A_0(\gamma),\ldots,A_{ \frac{k-1}{2}}(\gamma)$, we have $\frac{w(\gamma)}{\Pr[C_{\gamma}]}
\le b^{-k}/2$ for some constant $b>1$ independent of $k$, for any $\gamma \in \Gamma_k$ . Hence on the event $\bar Z_k > 1/2$, it must be the case that $\left| W_{o,k} \right| >b^{k}$.
\end{proof}
\section{The first 2 layers}
\label{sec: T2}
In this section we show that for any locally finite graph $G$ with a countable vertex set, all connected components of $T_2$ are $\mathrm{a.s.}$ finite. This demonstrates that although the marginal probabilities in $T_2$ would be super-critical in the independent setup,  for any infinite connected bounded degree tree,  the dependencies affect the global properties of $T_2$.
We start with a simple
lemma.

\begin{lemma}
\label{lem:monotone}
Given $G=(V,E)$ let $\gamma=(v_1,...,v_s)$ be a simple path in $G$. Suppose that $\{v_1,...,v_s\}\subseteq
T_2(G)$ and $\min(X_{v_1},...,{X_{v_s}})=X_{v_1}$.
Then the sequence $X_{v_1},...,{X_{v_s}}$ is monotonically increasing. Moreover, for all $2<i \in [s]$, $v_{i} \nsim \{v_{j} : j \in [i-2] \}$. Consequently, $T_2(G)$ is $\mathrm{a.s.}$ a forest. 
\end{lemma}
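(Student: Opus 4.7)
The plan is to exploit the defining property of $T_2(G)$: every $v \in T_2(G)$ has at most one strictly younger neighbor in $G$. All three assertions will reduce to exhibiting two distinct younger neighbors of some $v_i$, which is a contradiction. Since $V$ is countable and the $X_v$ are i.i.d.\ Uniform$[0,1]$, almost surely the ages are pairwise distinct, and I would work on that event throughout.

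For the monotonicity claim, I would argue by minimal counterexample. Suppose the sequence $X_{v_1},\dots,X_{v_s}$ is not monotone increasing, and let $i \ge 2$ be the smallest index with $X_{v_i} > X_{v_{i+1}}$; the index must satisfy $i \ge 2$ because by hypothesis $v_1$ is the youngest, so $X_{v_1} < X_{v_2}$. Minimality then forces $X_{v_1} < X_{v_2} < \cdots < X_{v_i}$, and in particular $X_{v_{i-1}} < X_{v_i}$. Now $v_{i-1}$ and $v_{i+1}$ are both neighbors of $v_i$ in $G$ (along $\gamma$), they are distinct because $\gamma$ is simple, and both are strictly younger than $v_i$; this contradicts $v_i \in T_2(G)$.

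For the non-adjacency statement, suppose $v_i \sim v_j$ in $G$ for some $2 < i \le s$ and some $j \le i-2$. By the monotonicity just established, $X_{v_j} < X_{v_{i-1}} < X_{v_i}$, so $v_j$ and $v_{i-1}$ are two distinct younger neighbors of $v_i$, again contradicting $v_i \in T_2(G)$. For the ``consequently'' part, suppose $T_2(G)$ contained a cycle $v_1,v_2,\dots,v_s,v_1$ with $s \ge 3$; by cyclically relabelling, assume $X_{v_1} = \min_{j \in [s]} X_{v_j}$, and apply the non-adjacency statement with $i = s$ to the simple path $(v_1,\dots,v_s)$. This yields $v_s \nsim v_1$, contradicting the cycle edge between $v_s$ and $v_1$; hence $T_2(G)$ is a.s.\ cycle-free. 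I do not anticipate any real obstacle: the only care points are invoking simplicity of $\gamma$ to keep the two exhibited younger neighbors at different indices, and restricting to the a.s.\ event of distinct ages so that ``$\min$'' is uniquely attained in both the hypothesis and the cycle argument.
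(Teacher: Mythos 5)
Your proof is correct and follows essentially the same route as the paper's: exhibit a local maximum along the path (which has two younger $\gamma$-neighbors) to get monotonicity, then use monotonicity to rule out chords, then apply the chord statement to the path obtained from a cycle started at its youngest vertex. You merely spell out two steps the paper leaves implicit — the minimal-counterexample argument locating the local maximum, and the deduction of the forest conclusion — so this is the paper's proof with slightly more detail.
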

\begin{proof}
If the sequence was not increasing, there would be a vertex $v_{\ell} \in \gamma$ such that $X_{v_{\ell}} > X_{v_{\ell+1}}$
and $X_{v_{\ell}} > X_{v_{\ell-1}}$. But by the definition of $T_2$, it cannot be
the case that $v_{\ell}$ belongs to $T_2$, a contradiction! Now, if $2<v_{i} \sim v_{j}$ for some $j \in [i-2]$, then using the monotonicity which was just established, $X_{v_{j}}<X_{v_{i-1}}<X_{v_{i}}$, so it cannot be the case that $v_i \in T_2$. 
\end{proof}
As a warm-up, we first consider the case that $G=(V,E)$ is of bounded degree, as it is significantly simpler.
\begin{theorem}
\label{thm: T2isfinitebdddegreecase}
Let $G$ be a graph with a countable vertex set $V$ and $\max_{v \in V}d_v =: \Delta < \infty$.  Then, $\Pr[T_2(G) \text{ has an infinite connected component}]=0$.
\end{theorem}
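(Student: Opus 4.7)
The plan is to use a first moment bound on the number of simple paths of length $k$ contained in $T_2(G)$ emanating from a fixed vertex $v$, combined with K\"onig's lemma and a union bound over the countable vertex set.

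First I would strengthen Lemma \ref{lem:monotone} to the following observation: along any simple path $\gamma = (v_1, v_2, \ldots, v_{k+1})$ contained in $T_2(G)$, the ages must form a ``V-shape'', i.e.~there exists a unique index $m$ (the position of the minimum age along $\gamma$) such that
\[
X_{v_1} > X_{v_2} > \cdots > X_{v_m} < X_{v_{m+1}} < \cdots < X_{v_{k+1}}.
\]
This follows by applying Lemma \ref{lem:monotone} to each of the two sub-paths on either side of the minimum, each of which starts at its own minimum.

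Next I would compute the probability of the V-shape event for a fixed simple path $\gamma$ of length $k$. Since the $k+1$ ages along $\gamma$ are i.i.d.~Uniform$[0,1]$, all $(k+1)!$ orderings are equally likely. For each fixed position $m \in \{1,\ldots,k+1\}$ of the minimum there are exactly $\binom{k}{m-1}$ orderings consistent with the V-shape (choose which of the $k$ non-minimum ages go to the left of the minimum; their order on each side is then forced by the monotonicity), yielding
\[
\Pr[\gamma \subseteq T_2(G)] \;\le\; \Pr[\text{V-shape along }\gamma] \;=\; \frac{2^{k}}{(k+1)!}.
\]
Since there are at most $\Delta(\Delta-1)^{k-1} \le \Delta^{k}$ simple paths of length $k$ starting at $v$, a union bound gives
\[
\Pr\bigl[\,\exists \gamma \in \Gamma_{v,k}: \gamma \subseteq T_2(G)\,\bigr] \;\le\; \frac{(2\Delta)^{k}}{(k+1)!},
\]
which tends to $0$ as $k \to \infty$ by Stirling's approximation.

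Finally, since $G$ is locally finite, K\"onig's lemma implies that if $v$ lies in an infinite connected component of $T_2(G)$, then there exists a simple path of length $k$ in $T_2(G)$ starting at $v$ for every $k$. The events $\{\exists \gamma \in \Gamma_{v,k} : \gamma \subseteq T_2(G)\}$ are nested decreasing in $k$, so their intersection contains $\{v \in \text{some infinite component of }T_2(G)\}$ and has probability $0$ by the bound above. A union bound over the countable set $V$ then gives the theorem. I expect no real obstacle; the only step requiring care is establishing the two-sided V-shape from the single-sided Lemma \ref{lem:monotone}, which is a direct application of the lemma on each side of the minimum.
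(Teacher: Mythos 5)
Your proof is correct and follows essentially the same route as the paper's: a union bound over the at most $\Delta^{k}$ simple paths of length $k$ from $v$, using Lemma \ref{lem:monotone} to force a monotone age structure along the path and hence a factorially small probability per path, which beats the exponential path count. The only cosmetic difference is that the paper works with the event that $v$ is the youngest vertex of the path (probability at most $1/(k+1)!$ per path, implicitly relocating the start of the ray to the age-minimal vertex of the cluster), whereas you keep $v$ fixed and pay the harmless extra factor $2^{k}$ coming from the two-sided V-shape count.
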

\begin{proof}
We may assume w.l.o.g.~that $G$ is connected. By Lemma \ref{lem:monotone}, the event that $v \in V$ belongs to an infinite cluster of $T_2$ is equal to the
event that there exists some $\gamma \in \Gamma_{v,\infty}$  such that $X_{\gamma_{i}}<X_{\gamma_{i+1}}$, for all $i \in \N$ and that $\gamma \subset T_2$. Call
the previous event $I_{v,\infty}$.

We now show that $\Pr[I_{v,\infty}]=0$, for all $v \in V$. Fix some $v \in V$. For every $n \in \N$ we define $\Gamma_{v,n}'$ to be the collection of all   $\gamma  \in \Gamma_{v,n}$, such that
$d_{\gamma_{\ell}} \ge 2$ and $\gamma_i \nsim \gamma_j$, for all $2 \le \ell \le n+1$ and all  $1\le j \le i-2 \le n-1$. For every $\gamma \in \Gamma'_{v,n}$ we define $L_{\gamma}$ to be the
event that $\gamma \subset T_2$ and that $v$ is the youngest vertex in $\gamma$.
  Define $I_{v,n}:= \bigcup_{\gamma
\in \Gamma'_{v,n}}  L_{\gamma}$. By Lemma \ref{lem:monotone}
 
\begin{equation}
\label{eq: 3.3}
\Pr[L_{\gamma}] \le \frac{1}{(n+1)!}, \text{ for all }\gamma
\in \Gamma'_{v,n}, \text{ for all }n.
\end{equation}
Clearly, $|\Gamma'_{v,n}| \le \Delta^{n}$. Hence by (\ref{eq: 3.3}) and a union bound over all $\gamma \in \Gamma'_{v,n}$, we get that $$\Pr[I_{v,n}] \le \frac{\Delta^n}{(n+1)!} \to 0, \text{ as }n \to \infty.$$ By
Lemma \ref{lem:monotone}, the decreasing intersection $\bigcap_{n \ge 1} I_{v,n}$ equals $I_{v,\infty}$. So $\Pr[I_{v,\infty}]=0$, for all $v \in V$. We are done, since
$\Pr[T_2 \text{ has an infinite cluster}] \le \sum_{v \in V}\Pr[I_{v,\infty}]=0$.
\end{proof}
\begin{theorem}
\label{thm: T2isfinitenonbdddegreecase}
Let $G=(V,E)$ be a locally finite graph with a countable vertex set $V$.  Then, $\Pr[T_2(G) \text{ has an infinite connected component}]=0$.
\end{theorem}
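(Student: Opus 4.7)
The plan is to extend the argument of Theorem \ref{thm: T2isfinitebdddegreecase} by replacing its crude step $|\Gamma'_{v,n}| \le \Delta^n$, which is useless when $\Delta = \infty$, by a weighted enumeration of paths via non-backtracking random walk probabilities, combined with a finer per-path estimate on $\Pr[L_\gamma]$ that exploits the degrees of the vertices along $\gamma$.

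First I would set up the structural reduction. By Lemma \ref{lem:monotone}, $T_2(G)$ is a.s.~a forest, so any infinite cluster is a locally finite infinite tree and contains an infinite ray $(u_0, u_1, u_2, \ldots)$ by K\"onig's lemma. Along this ray, an ``up-then-down'' direction change at some $u_i$ (i.e.\ $X_{u_{i-1}} < X_{u_i} > X_{u_{i+1}}$) would make $u_i$ have at least two younger neighbors, contradicting $u_i \in T_2$. Hence the direction pattern along the ray forbids $UD$: either the ray is monotone decreasing throughout (witnessing the event $D_{u_0,\infty}$ of an infinite monotone decreasing path in $T_2$ from $u_0$) or else it has a unique youngest vertex $u_{i^*}$ beyond which it is monotone increasing (witnessing $I_{u_{i^*},\infty}$, defined as in Theorem \ref{thm: T2isfinitebdddegreecase}). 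Using Lemma \ref{lem: 0-1infinitecluster} and a union bound over the countable $V$, it suffices to show that $\Pr[I_{v,n}], \Pr[D_{v,n}] \to 0$ as $n \to \infty$ for each fixed $v$.

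For the per-path bound, fix $\gamma \in \Gamma'_{v,n}$ and condition on the ages $x_0 < x_1 < \cdots < x_n$ along $\gamma$. On $L_\gamma$ each $v_i$ with $i \ge 1$ must have every non-$\gamma$-neighbor older than $x_i$; when $G$ is a tree these constraints are independent across $i$, and integrating the resulting factor $\prod_{i=1}^n (1-x_i)^{|N_i^{\mathrm{out}}|}$ over the ordered simplex yields, after the calculation of iterated integrals,
$$\Pr[L_\gamma] \le \frac{1}{(n+1)\, d_{v_n}\, \prod_{i=1}^{n-1}(d_{v_i}-1)}.$$
In the non-tree case, overlapping non-$\gamma$-neighborhoods merge several independent constraints into a single (weaker) max-constraint on the shared vertex's age, so the tree-case formula is in fact a lower (not upper) bound on the true conditional probability; this is handled by retaining constraints only at a carefully chosen subset of indices whose associated non-$\gamma$-neighborhoods are pairwise disjoint, restoring the independence and yielding essentially the same estimate.

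Finally, introduce the non-backtracking walk weights $p_\gamma := 1/\bigl(d_v \prod_{i=1}^{n-1}(d_{v_i}-1)\bigr)$, which are the probabilities that a non-backtracking random walk from $v$ traces $\gamma$ and therefore form a sub-probability measure on paths, i.e.\ $\sum_{\gamma \in \Gamma'_{v,n}} p_\gamma \le 1$. Combining with the per-path estimate,
$$\Pr[I_{v,n}] \le \sum_{\gamma \in \Gamma'_{v,n}} \Pr[L_\gamma] \le \frac{1}{n+1}\sum_\gamma \frac{d_v p_\gamma}{d_{v_n}} \le \frac{d_v}{n+1} \longrightarrow 0,$$
and a symmetric argument (reversing the roles of youngest and oldest vertex on $\gamma$) handles $\Pr[D_{v,n}]$. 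The main technical obstacle is the per-path estimate in the non-tree case: shared outside-neighbors strengthen the joint age constraint into a max-constraint that makes the naive tree-bound invalid as an upper bound, so one must exhibit a sufficiently long subfamily of $\gamma$-indices with disjoint non-$\gamma$-neighborhoods — this is the delicate combinatorial step where the bulk of the work lies.
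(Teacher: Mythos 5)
Your proposal takes a genuinely different route from the paper's, and a cleaner one if it can be made to work: you aim for a single per-path upper bound $\Pr[L_\gamma]\le\frac{1}{(n+1)\,d_{v_n}\prod_{i=1}^{n-1}(d_{v_i}-1)}$ that you then sum against non-backtracking-walk weights $p_\gamma$, whereas the paper introduces a path-dependent weight $\kappa(\gamma)$, shows $\min_\gamma\kappa(\gamma)\to\infty$, and controls $\mathbf E\bigl[\sum_\gamma\kappa(\gamma)1_{B_\gamma}\bigr]$ via an explicit one-step recurrence in $n$. Your target per-path bound is in fact correct for every induced path with $d_{v_i}\ge 2$ in an arbitrary locally finite graph, so the strategy is viable; and your structural reduction is a bit more careful than the paper's, since the paper's claim that ``$v$ belongs to an infinite cluster'' equals $I_{v,\infty}$ quietly ignores the possibility of a ray whose ages decrease forever, which your $D_{v,n}$ handles explicitly (and which, by reversing the path, reduces to the increasing-path estimate with $d_v$ in place of $d_{v_n}$).

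The gap is in your proof of the per-path estimate, and it is a real one. Integrating $\prod_i(1-x_i)^{|N_i^{\mathrm{out}}|}$ over the ordered simplex is only valid when the outside-neighborhoods $N_i^{\mathrm{out}}$ are disjoint; you correctly observe this yields a \emph{lower} bound on $\Pr[L_\gamma]$ in general. But your proposed repair---retaining constraints only on a subfamily $S$ with pairwise-disjoint outside-neighborhoods---cannot give ``essentially the same estimate.'' The bound obtained that way only carries the degree factors $d_{v_i}-1$ for $i\in S$, and nothing forces $S$ to be large: a single high-degree vertex $w$ adjacent to all of $v_1,\dots,v_n$ forces $|S|=1$, leaving essentially none of the degree product $\prod(d_{v_i}-1)$ that the NBW normalization needs to cancel. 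So $\sum_\gamma\Pr[L_\gamma]$ is no longer controlled; the step fails as written, and the same obstruction hits the symmetric $D_{v,n}$ case.

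The missing ingredient is exactly the device the paper uses for its $B_\gamma$ computation: replace the per-vertex constraints by the nested events $C_i'(\gamma):=\{\gamma_i \text{ is youngest in } T_i'(\gamma)\}$, where for $2\le i\le n+1$ one sets $T_i'(\gamma):=\{u:d(u,\{\gamma_i,\dots,\gamma_{n+1}\})\le1\}\setminus\{\gamma_{i-1}\}$ and $T_1':=T_2'\cup\{\gamma_1\}$. Because $\gamma$ is induced, $T_1'\supsetneq T_2'\supsetneq\cdots\supsetneq T_{n+1}'$ with $\gamma_i\in T_i'\setminus T_{i+1}'$, and then $C_1',\dots,C_{n+1}'$ are mutually independent with $\Pr[C_i']=1/|T_i'|$ by exchangeability of the ages---this is true whether or not outside-neighborhoods overlap. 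One checks $L_\gamma\subset\bigcap_i C_i'(\gamma)$, and the crucial estimates are $|T_i'|\ge d_{\gamma_i}$ (as $T_i'$ contains $\gamma_i$ and all its neighbors except $\gamma_{i-1}$) together with $|T_1'|\ge n+2$ from the strict nesting, giving $\prod_{i=1}^{n+1}|T_i'|\ge(n+2)\prod_{i=2}^{n+1}d_{\gamma_i}$ and hence your bound. With this lemma in hand, the remainder of your argument goes through.
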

\begin{proof}
For all $v \in V$ and $n \in \N$ we define $I_{v,n}$ and $\Gamma'_{v,n}$ as in the proof of Theorem \ref{thm: T2isfinitebdddegreecase}. As in the proof of Theorem \ref{thm: T2isfinitebdddegreecase}, it suffices to show that $\Pr[I_{v,n}] \to 0$, as $n \to \infty$, for
all $v \in V$. Instead of a straightforward union bound over all $\gamma \in \Gamma'_{v,n}$, which is difficult to perform in the non-bounded degree setup, we perform a weighted first moment calculation which gives rise to a recurrence relation with respect to $n$. 

Fix some $v \in V$ with $d_{v}>1$. For every $n>1$, we define $$\kappa(\gamma):= \prod_{i=2}^{n} \left( \frac{d_{\gamma_{i}}} {d_{\gamma_{i}}-1} \right) \left( \frac{d_{\gamma_{i}}+d_{\gamma_{i-1}}-1}{d_{\gamma_{i-1}}}  \right), \text{ for every }\gamma:=(v=\gamma_{1},\ldots,\gamma_{n+1})
\in \Gamma'_{v,n}.$$ We will show that
\begin{equation}
\label{eq: wtoinftyuniformly}
\min_{\gamma \in \Gamma'_{v,n}} \kappa(\gamma) \to \infty, \quad \text{as }n \to \infty.
\end{equation}
For every $\gamma \in \Gamma'_{v,n}$ let $L_{\gamma}$ be the event that $\gamma
\subset T_2$ and that $v$ has the minimal age in $\gamma$. Let $Y_{\gamma}:=1_{L_{\gamma}}$.
We will show that for all $n \in \N$,
\begin{equation}
\label{eq: wgammaYgamma}
\mathbf{E} \left[\sum_{\gamma \in \Gamma'_{v,n}}\kappa(\gamma)Y_{\gamma} \right]
\le M, \text{ for some  constant }M=M(v), \text{ independent of }n.
\end{equation}
Note that (\ref{eq: wgammaYgamma}) in conjunction with (\ref{eq: wtoinftyuniformly})
imply that $\lim_{n \to \infty}\mathbf{E} \left[\sum_{\gamma \in \Gamma'_{v,n}}Y_{\gamma}
\right] = 0$. In particular, a union bound over all $\gamma \in \Gamma'_{v,n}$
yields that  $$\Pr[I_{v,n}]=\Pr \left[\bigcup_{\gamma \in \Gamma'_{v,n}} L_{\gamma} \right]\le
\sum_{\gamma \in \Gamma'_{v,n}}\Pr[L_{\gamma}] =\mathbf{E}\left[
\sum_{\gamma \in \Gamma'_{v,n}}Y_{\gamma}\right]  \to 0, \text{ as }n \to
\infty,$$ 
from which the assertion of the theorem follows.

We now prove (\ref{eq: wtoinftyuniformly}). For all $1<m \in \N$ we define  $n(m)$ to be the minimal integer satisfying $(1+\frac{1}{md_{v}-1})^{n(m)-1} \ge m$. If $\gamma \in \Gamma'_{v,n(m)}$ and $\max \{d_u : u \in \gamma \} \le md_{v}$, then by the choice of $n(m)$, we get that $\kappa(\gamma) \ge  \prod_{u \in \gamma \setminus \{v \}} \frac{d_{u}} {d_{u}-1} \ge (1+\frac{1}{md_{v}-1})^{n(m)-1}  \ge m$. We now show that for every $\gamma \in \Gamma'_{v,n(m)}$ with $\max \{d_u : u \in \gamma \} > md_{v}$, we also have that $\kappa(\gamma)\ge m$. Fix some $\gamma \in \Gamma'_{v,n(m)}$. Consider \[J:=\{ 1<i \in [n+1] : d_{\gamma_{i}}>d_{\gamma_{j}} \text{ for all } j \in [i-1]\}.\] We can order the elements of $J$, as follows: $J=\{i_1,i_2,\ldots,i_k \}$  ($k=|J|$), such that if $1 \le s<t \le k$, then the degree of $v_{i_{s}}$ is smaller than that of $v_{i_{t}}$. For typographical reasons, for any $s \in [k]$ we denote $u_s:=\gamma_{i_s}$  and set $u_0=v$.

Note that if $1 \le x \le y \le z$, then $\frac{x+y-1}{x}
\frac{y+z-1}{y} \ge \frac{x+z-1}{x}$. Whence by induction, $\prod_{i=1}^{k}\frac{n_{i-1}+n_{i}-1}{n_{i-1}}
\ge \frac{n_0+n_{k}-1}{n_0}$, for any integers $1 \le n_0 < n_1
< \cdots < n_{k}$. Thus, $$\kappa(\gamma) \ge \prod_{s \in [k]}\frac{d_{\gamma_{i_{s}-1}}+d_{\gamma_{i_{s}}}-1}{d_{\gamma_{i_{s}-1}}} \ge  \prod_{s \in [k]}\frac{d_{u_{s-1}}+d_{u_s}-1}{d_{u_{s-1}}} \ge \frac{d_{u_{0}}+d_{u_{k}}-1}{d_{u_{0}}} \ge \frac{\max \{d_u : u \in \gamma \}}{d_v} \ge m,$$
as claimed. This establishes (\ref{eq: wtoinftyuniformly}).

We now prove (\ref{eq: wgammaYgamma}). Fix some $n \ge 2$ and some $\gamma\in \Gamma'_{v,n}$. Let $B_{\gamma}$ be the event that  $X_{\gamma_{i}}<X_{\gamma_{i+1}}$ for all $i \in [n]$ and that for all 
$1<i<n+1$ we have that  $X_{\gamma_{i}}<X_u$ for all $u \notin \gamma$ such that $u \sim \gamma_i$. By Lemma \ref{lem:monotone}, $B_{\gamma} \supset L_{\gamma}$. For every $1<i \le n$, we denote $$T_{i}(\gamma):=\left\{u : d\left(u,\{\gamma_i,\gamma_{i+1},\ldots,\gamma_n  \}\right) \le 1 \right\} \setminus \{\gamma_{i-1} \} .$$ Set $T_1(\gamma):=T_2(\gamma) \cup \{v \}$. For   $i \in [n]$ let $C_i(\gamma)$ be the event that $ \gamma_i$ is the youngest vertex in  $T_i(\gamma)$. Note that $B_{\gamma}=\bigcap_{i=1}^{n}C_{i}(\gamma)$. Observe that the events $C_{1}(\gamma),\ldots,C_n(\gamma)$  are mutually independent. One way to see this is to note that the conditional distribution of $(X_{u} : u \in T_{i+1}(\gamma))$, given $C_1(\gamma),\ldots,C_i(\gamma)$ and
$(X_{\gamma_{j}} : j \in [i])$, is that of independent Uniform$(X_{\gamma_{i}},1]$ random variables. Alternatively, this follows from the fact that all orderings of $T_{1}(\gamma)$ (with respect to the ages of the vertices of $T_{1}(\gamma)$) are equally likely. Thus, \begin{equation}
\label{eq: B(gamma)isopen}
\Pr[B_{\gamma}]= \prod_{i=1}^{n}\Pr[C_{i}(\gamma)]=\prod_{i=1}^{n}\frac{1}{|T_{i}(\gamma)|}. \end{equation}
Let $m(\gamma):=\{u \sim \gamma_{n+1} : d(u,\{\gamma_j : j \in [n] \}) \ge 2 \}$. For every $\gamma':=(\gamma'_1,\gamma'_2,\ldots,\gamma'_{n+2}) \in
\Gamma'_{v,n+1}$ we denote $\gamma'|_{[n+1]}:=(\gamma'_1,\ldots,\gamma'_{n+1}) $. From the definition of $\Gamma'_{v,n+1}$,  if $ \gamma'  \in \Gamma_{v,n+1}$ is such that $\gamma'|_{[n+1]}=\gamma$, then $\gamma'_{n+2} \in m(\gamma)$. Moreover, the following hold. $$\kappa(\gamma')=\kappa(\gamma)\frac{d_{\gamma_{n+1}}(d_{\gamma_{n}}+d_{\gamma_{n+1}}-1)} {(d_{\gamma_{n+1}}-1)d_{\gamma_{n}}},$$ $$|T_{i}(\gamma)| \le |T_{i}(\gamma')|, \text{ for all }i \in [n-1],$$ $$|T_{n+1}(\gamma')|=d_{\gamma_{n+1}},\, |T_n(\gamma)| = d_{\gamma_n} \text{ and }|T_n(\gamma')| \ge d_{\gamma_{n}}+|m(\gamma)|,$$ $$|m(\gamma)| \le d_{\gamma_{n+1}}-1.$$ Hence, by (\ref{eq: B(gamma)isopen})
we have\begin{equation}
\label{eq: B(gamma)isopen2}
\begin{split}
& \sum_{\gamma' \in \Gamma'_{v,n+1} : \gamma'|_{[n+1]}=\gamma} \Pr[B_{\gamma'}]\kappa(\gamma')= \sum_{\gamma' \in \Gamma'_{v,n+1} : \gamma'|_{[n+1]}=\gamma} \kappa(\gamma') \prod_{i=1}^{n+1}1/|T_{i}(\gamma')|
\\ & \le \kappa(\gamma)\frac{d_{\gamma_{n+1}}(d_{\gamma_{n}}+d_{\gamma_{n+1}}-1)} {(d_{\gamma_{n+1}}-1)d_{\gamma_{n}}} \prod_{i=1}^{n}|T_{i}(\gamma)|^{-1} \sum_{\gamma' \in \Gamma'_{v,n+1} : \gamma'|_{[n+1]}=\gamma}\frac{|T_n(\gamma)|}{d_{\gamma_{n+1}}|T_n(\gamma')|}   \\ & \le \kappa(\gamma)\frac{d_{\gamma_{n+1}}(d_{\gamma_{n}}+d_{\gamma_{n+1}}-1)} {(d_{\gamma_{n+1}}-1)d_{\gamma_{n}}}  \Pr[B_{\gamma}]|m(\gamma)| \frac{d_{\gamma_{n}}}{d_{\gamma_{n+1}}(d_{\gamma_{n}}+|m(\gamma)|)} \le  \Pr[B_{\gamma}]\kappa(\gamma).
\end{split}
\end{equation}
Denote $Z_{\gamma}:=1_{B_{\gamma}}$. From  (\ref{eq: B(gamma)isopen2}) we get the following recurrence relation,
\begin{equation}
\label{eq: sumwgammaYgamma}
\begin{split}
\mathbf{E} \left[\sum_{\gamma '\in \Gamma'_{v,n+1}}\kappa(\gamma')Z_{\gamma'} \right]
 \le \mathbf{E} \left[\sum_{\gamma \in \Gamma'_{v,n}}\sum_{\gamma '\in \Gamma'_{v,n+1} : \gamma'|_{[n+1]}=\gamma}\kappa(\gamma')Z_{\gamma'} \right] \le \mathbf{E} \left[\sum_{\gamma \in \Gamma'_{v,n}}\kappa(\gamma)Z_{\gamma} \right].
\end{split}
\end{equation}
Iterating, we get that $\mathbf{E} \left[\sum_{\gamma '\in \Gamma'_{v,n+1}}\kappa(\gamma')Z_{\gamma'} \right]\le \mathbf{E} \left[\sum_{\gamma
'\in \Gamma'_{v,2}}\kappa(\gamma')Z_{\gamma'} \right]  =: M(v)$, for every $1<n \in \N$. This implies (\ref{eq: wgammaYgamma}), since $Y_{\gamma'} \le Z_{\gamma'}$ for all $\gamma' \in
\Gamma_{v,n+1}$ (as $B_{\gamma'} \supset L_{\gamma'}$).
\end{proof}
\section{The first 4 layers in $\mathbb{Z}^d$}
\label{sec: Zd}

For every $1 \le i \le d$, let $e_{i} \in \mathbb{Z}^{d}$ be the vector whose
$i$th co-ordinate is 1 and the rest of its co-ordinates are 0. Let $$\Z_{+}^{d}:=\{(x_1,\ldots,x_d)
\in \Z^d : x_i \ge 0, i \in [d] \}.$$
In this section we prove the following theorem, whose assertion is stronger than that of Theorem \ref{thm: Z^dintro}. 
\begin{theorem}
\label{thm: Z^d}
For all sufficiently large $d$, the vertex-induced graph on $T_4(\Z^{d}) \cap \Z_{+}^{d}$ $\mathrm{a.s.}$ contains an infinite path $(v_1,v_2,\ldots)$, such that
$v_{i+1}-v_{i} \in \{ e_{j}: j \in [d]\}$, for all $i$. 
\end{theorem}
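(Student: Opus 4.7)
The plan is to adapt the Cox--Durrett Exponential Intersection Tail (EIT) method for oriented percolation to our dependent setting, using an auxiliary Markov chain to encode the bounded-range dependencies of the Layers model along two monotone paths, as hinted in Section~\ref{sec: overview}. First I would set up the reference measure $\mu_n$ assigning weight $d^{-n}$ to each of the $d^n$ length-$n$ oriented monotone paths in $\Z_+^d$ from the origin. Note that for a single vertex $v \in \Z^d$ one has $\Pr[v \in T_4] = 4/(2d+1) \sim 2/d$, comfortably above the $1/d$ critical threshold for independent oriented site percolation, so there is room to absorb the Layers-model dependencies.

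For each monotone path $\gamma = (v_0, \ldots, v_n)$ I would define a good event $G_\gamma \subseteq \{\gamma \subset T_4\}$ analogous to the $A_\gamma$ of Definition~\ref{def: goodpathonatreedef}. The idea is to partition $\gamma$ into consecutive blocks of bounded length and impose on each block together with its off-path neighborhood a local age-ordering condition that (i)~forces every vertex of the block to lie in $T_4$, and (ii)~depends on essentially disjoint sets of vertex ages across blocks. A computation of marginal probabilities in the spirit of \eqref{eq: marginalsforgoodpath'} would then give $\Pr[G_\gamma] \geq q^n$ for some $q > 1/d$. Setting $Y_\gamma := d^{-n}\mathbf{1}_{G_\gamma}/\Pr[G_\gamma]$ and $Z_n := \sum_{|\gamma|=n} Y_\gamma$, we have $\mathbf{E}[Z_n] = 1$, and the rest of the proof is a uniform-in-$n$ second moment bound.

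For two independent $\mu_n$-random paths $\gamma,\gamma'$, the crucial ratio $\Pr[G_\gamma \cap G_{\gamma'}]/(\Pr[G_\gamma]\Pr[G_{\gamma'}])$ factorizes, up to bounded boundary corrections, as a product of local factors indexed by time: at step $k$ the factor depends only on a state $s_k$ encoding the local relative configuration of $\gamma$ and $\gamma'$ (coincident, at lattice distance $1$ or $2$, sharing a common off-path neighbor, permanently separated, etc.). The sequence $(s_k)$ forms a finite-state Markov chain driven by the joint random-walk steps $(\gamma_k - \gamma_{k-1}, \gamma'_k - \gamma'_{k-1}) \in [d]^2$, and for large $d$ it contracts to the absorbing ``separated'' state at rate $O(1/d)$---intuitively because two monotone random walks step in the same direction with probability only $1/d$, and most other joint choices take them out of the range-$2$ dependency window. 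Geometric summation of these local correction factors then gives $\mathbf{E}[Z_n^2] \leq M$ uniformly in $n$.

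The Paley--Zygmund inequality yields $\Pr[Z_n > 1/2] \geq 1/(4M)$ for every $n$, and a K\"onig's-lemma compactness argument on the decreasing intersection $\bigcap_n \{Z_n > 0\}$ produces an infinite monotone path in $T_4(\Z^d) \cap \Z_+^d$ from the origin with positive probability; translation invariance of the Layers model on $\Z^d$ together with Lemma~\ref{lem: 0-1infinitecluster} upgrades this to almost-sure existence of the desired infinite monotone path. The main obstacle is the design of the auxiliary Markov chain: in $\Z^d$ two path vertices at distance $2$ share one off-path common neighbor, so the blocks cannot be made truly independent, and one must verify both that the block-wise factorization holds up to a bounded constant per block boundary and that the chain contracts at the sharp rate $O(1/d)$ rather than merely being subcritical, which is precisely where the range-$2$ nature of the Layers dependencies must be exploited.
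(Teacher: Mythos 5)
Your proposal matches the paper's strategy closely: a weighted second-moment argument over uniform monotone paths (the paper's $\nu_{2k-1}$ on $\Gamma_{2k-1}^{\mathrm{mon}}$), good events $A_i(\gamma)$ on length-$2$ blocks with marginal $\ge (a/d)^2$, an auxiliary Markov chain on relative-displacement states (the paper uses $\{0,2,4,\infty\}$ with $O(1/d)$ contraction to the absorbing state), geometric summation to get $\mathbf{E}[Z_k^2]\le\beta$ uniformly, Paley--Zygmund, and the tail $0$-$1$ law of Lemma~\ref{lem: 0-1infinitecluster}. The obstacle you correctly flag at the end---block events are not independent because adjacent pairs share a common off-path neighbor---is resolved in the paper not by engineering disjoint dependency sets but by Proposition~\ref{prop: pathsproband domination}: one conditions on the within-pair age-order event $S_w$, then uses FKG together with the affiliation inequality (Theorem~\ref{thm: affiliation}) to show the $A_i(\gamma)$'s are positively correlated ($\Pr[A(\gamma)] \ge \prod_j\Pr[A_j(\gamma)]$) and that conditioning on $A(\gamma)$ stochastically raises the off-path neighbor ages, which is exactly what makes the per-block correction factors in Proposition~\ref{prop: PA(gamma)intersectionA(gamma')} bounded and lets the Markov-chain bookkeeping close.
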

In the proof of Theorem \ref{thm: Z^d} we use a variant of the EIT method,
introduced in $\cite{cox1983oriented}$. Cox and Durrett attribute the argument
to Harry Kesten. As a warm-up, we first present in Lemma \ref{lem: EIT1}, a calculation
taken from  $\cite{cox1983oriented}$. We do not use Lemma \ref{lem: EIT1}
and we present it and its proof since the proof of Lemma \ref{lem: EIT2} uses some of the calculations from the proof of Lemma \ref{lem: EIT1}.

\begin{definition}[EIT]
\label{def: EITdef}
Let $\mu$ be a probability measure on infinite simple paths in a graph $G$. Let $\alpha \in (0,1)$. We say
that $\mu$ has $\mathrm{EIT}(\ga)$, if there exists some $C>0$ such that for all $k \in \N$, $$\mu \times
\mu (\{(\gamma,\gamma') : |\gamma \cap \gamma'| \ge k \}) \le C \ga^k,$$ where $|\gamma \cap \gamma'|$ is the number of vertices the
paths $\gamma$ and $\gamma'$ have in common. In simple words, the probability
that two paths chosen independently, each from the distribution $\mu$, will
have at least $k$ common vertices, is at most $C \ga^k$. If such $\mu$ exists,
we say that $G$ admits random paths with $\mathrm{EIT}(\ga)$. The same definition
applies when $G$ is an oriented graph and the paths are oriented paths. 
\end{definition}
\begin{definition}
\label{def: Monotonepaths}
 Consider a random walk with initial position $\mathbf{0}:=(0,0,\ldots,0)$, whose increments distribution is the uniform distribution on $\{e_{i}
:i \in [d] \}$. Let $\mu_{d}$ be the  probability measure corresponding to the infinite trajectory
of this random walk. Then $\mu_{d}$ is called the uniform distribution
on \emph{monotone paths} in $\mathbb{Z}^{d}$. We call a path $\gamma$ a \emph{monotone path} if $\gamma_{i+1}-\gamma_{i}\in \{e_j:j \in [d] \} $, for all $i\in \N$. We denote the collection of all monotone paths of length $k$ starting from $\mathbf{0}$ by $\Gamma_k^{\mathrm{mon}}$.\end{definition}

\begin{lemma}
\label{lem: EIT1}
$\mu_{d}$ has $\mathrm{EIT}(\ga_d)$ for some $\ga_d=1/d+(1/d)^{2}+O(d^{-3})$, for
any $d \ge 4$.
\end{lemma}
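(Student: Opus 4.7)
The plan is to reduce the intersection count $|\gamma\cap\gamma'|$ to a return problem for a random walk on $\Z^d$, and then to estimate its Green function via Fourier analysis.

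First, since every monotone path reaches $L^1$-distance exactly $n$ from $\mathbf{0}$ after $n$ steps, two monotone paths $\gamma,\gamma'$ starting at $\mathbf{0}$ can meet only at equal time indices, so $|\gamma\cap\gamma'| = \#\{n\ge 0: \gamma_n=\gamma'_n\}$. Setting $D_n:=\gamma_n-\gamma'_n$, the process $(D_n)_{n\ge 0}$ under $\mu_d\times\mu_d$ is a random walk on $\Z^d$ starting at $\mathbf{0}$ whose i.i.d.\ increments $Y_n$ satisfy $\Pr[Y_1=\mathbf{0}]=1/d$ and $\Pr[Y_1=e_i-e_j]=1/d^2$ for each $i\ne j$. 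Intersections of $\gamma$ and $\gamma'$ are exactly the visits of $D$ to $\mathbf{0}$.

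By the strong Markov property applied at successive return times of $D$ to $\mathbf{0}$, the number of such visits is geometrically distributed; writing $q:=\Pr[\exists n\ge 1: D_n=\mathbf{0}]$,
\[
(\mu_d\times\mu_d)\bigl(|\gamma\cap\gamma'|\ge k\bigr) = q^{k-1}\qquad(k\ge 1).
\]
This establishes $\mathrm{EIT}(q)$ with constant $C=1/q$, so the lemma reduces to proving $q\le 1/d+1/d^2+O(d^{-3})$. Using the standard identity $G(0):=\sum_{n\ge 0}\Pr[D_n=\mathbf{0}]=1/(1-q)$, one has $q=1-1/G(0)$. I would compute $p_1:=\Pr[D_1=\mathbf{0}]=1/d$ and, by direct enumeration of matching two-step sums, $p_2=(2d-1)/d^3=2d^{-2}-d^{-3}$.

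It remains to show $\sum_{n\ge 3}\Pr[D_n=\mathbf{0}]=O(d^{-3})$. For this I would use Fourier inversion: with $\hat\phi(\theta):=\mathbf{E}[e^{i\theta\cdot Y_1}]=d^{-2}\bigl|\sum_{j=1}^d e^{i\theta_j}\bigr|^2$, one has $\sum_{n\ge 3}p_n=\int \hat\phi(\theta)^3/(1-\hat\phi(\theta))\,d\theta/(2\pi)^d$. I would split $[-\pi,\pi]^d$ into $A:=\{\hat\phi\le 1/2\}$ and $B:=\{\hat\phi>1/2\}$. On $A$ the integrand is at most $2\hat\phi^3$, whose integral equals $2p_3=O(d^{-3})$ by a combinatorial expansion of $|\sum_j e^{i\theta_j}|^6$. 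On $B$ I would use the moment bounds $\mathbf{E}[R^k]=O(1)$ (for each fixed $k$), where $R:=d\hat\phi$, to show that $|B|$ decays faster than any fixed negative power of $d$, and combine this with the local expansion $1-\hat\phi\asymp\|\theta^\perp\|^2/d$ near the singular point $\theta=\mathbf{0}$, with $\theta^\perp$ the projection onto the orthogonal complement of the invariance direction $\mathbf{1}$ of $\hat\phi$; the super-exponential decay in $d$ of the volume of the unit ball in $\R^{d-1}$ then dominates the $d$-factor coming from this expansion.

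Combining yields $G(0)=1+1/d+2/d^2+O(d^{-3})$, and a geometric-series inversion gives $q=1/d+1/d^2+O(d^{-3})$, as required. The main obstacle is the tail estimate on $B$, where $1/(1-\hat\phi)$ has a genuine singularity at $\theta=\mathbf{0}$; everything else is either standard (the Markov reduction and the renewal identity) or a direct if slightly tedious combinatorial computation of $p_2$ and $p_3$.
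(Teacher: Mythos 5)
Your framework is correct and the final answer matches, but the route you take to bound the tail $\sum_{n\ge 3}\Pr[D_n=\mathbf{0}]$ is genuinely different from (and substantially harder than) the paper's. The reduction to the difference walk $D_n$, the strong Markov/renewal argument giving $(\mu_d\times\mu_d)(|\gamma\cap\gamma'|\ge k)=q^{k-1}$, the identity $q=1-1/G(0)$, and the computations $p_1=1/d$, $p_2=(2d-1)/d^3$ are all right, and equivalent to the paper's direct estimate of $\Pr[\tau<\infty]$ term by term. Where you diverge is the tail: the paper simply observes that $\Pr[D_n=\mathbf{0}]=\Pr[S_n=S'_n]\le\max_x\Pr[S_n=x]$, bounds the multinomial coefficient by $n!\,d^{-n}$ for $n\le d$ (giving $O(d^{-4})$ for $4\le n\le d$), and handles $n>d$ with a crude Stirling estimate — two lines, purely combinatorial. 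You instead invoke Fourier inversion and split $[-\pi,\pi]^d$ by $\hat\phi\lessgtr 1/2$. The low region is fine ($2p_3=O(d^{-3})$), but the high region is the real work, as you acknowledge. Two cautions there: first, the singular set is not the point $\mathbf{0}$ but the whole line $\{t\mathbf{1}\}$, since the increment $Y_1$ lives in the hyperplane $\{\sum_i x_i=0\}$; you do allude to this via the ``invariance direction'' but the phrasing ``singularity at $\theta=\mathbf{0}$'' is misleading. Second, the local expansion $1-\hat\phi\asymp\|\theta^\perp\|^2/d$ holds only for $\|\theta^\perp\|$ small, yet $B=\{\hat\phi>1/2\}$ is a tube of width $\sim\sqrt{d}$ in $\theta^\perp$, so you need a global sub-level-set estimate, e.g. a Chernoff bound of the form $\Pr[1-\hat\phi<\delta]\lesssim (C\delta)^{(d-1)/2}$ for all $\delta<1/2$, before the $\int_2^\infty\Pr[1-\hat\phi<1/s]\,ds$ integral converges and decays in $d$. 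This can be done, but it is noticeably more delicate than the paper's argument. Worth noting: the multinomial bound $p_n\le n!\,d^{-n}$ that the paper uses plugs directly into your Green's-function identity to give $\sum_{n\ge3}p_n=O(d^{-3})$ with no Fourier analysis at all — so the split by $\hat\phi$ buys you nothing here and costs you the hardest estimate in your sketch.
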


\begin{proof}
Let $(S_k)_{k=0}^{\infty}$ and $(S'_k)_{k=0}^{\infty}$ be two independent random walks with distribution
$\mu_d$.
Let $\tau:=\inf \{k \ge 1 : S_{k}=S'_k \}$. Then,
\begin{equation}
\label{eq: 4.1}
\begin{split}
\Pr(\tau=1)&=d^{-1},
\\ \Pr(\tau=2)&=d^{-2}-d^{-3},
\\ \Pr(\tau=3)& <
3d^{-3},
\end{split}
\end{equation}
as
\begin{equation*}
\begin{split}
& \Pr(\tau=3) = \\ & \Pr[ d(S_1,S'_1)=2] \Pr[ d(S_2,S'_2)=2 \mid d(S_1,S'_1)=2]\Pr[ d(S_3,S'_3)=0 \mid d(S_2,S'_2)=2] < 3d^{-3}.
\end{split}
\end{equation*}
By the independence of  $(S_k)_{k=0}^{\infty}$ and $(S'_k)_{k=0}^{\infty}$,  \begin{equation}
\label{eq: 4.4'}
\Pr(\tau=k) \le \Pr(S_k=S'_k) = \sum_{x}\Pr(S_{k}=x)\Pr(S'_k=x) \le \max_x
\Pr(S_{k}=x), \text{ for any }k \ge 4.
\end{equation}
If $4 \le k \le d$, then for any $x \in \Z_+^d$, with $d(x,0)=k$, we have
\begin{equation}
\label{eq: 4.2'}
\Pr(S_k=x)={k \choose x_{1},\ldots,x_{d}}d^{-k} \le k!d^{-k}.
\end{equation}
Since the right hand side of (\ref{eq: 4.2'}) is non-increasing in $k$ for $4 \le k \le d$,
  (\ref{eq: 4.4'})-(\ref{eq: 4.2'}) imply that,
\begin{equation}
\label{eq: 4.3'}
\Pr(4 \le \tau \le d) \le \sum_{k=4}^{d}k!d^{-k} \le 4!d^{-4}+5!d^{-5}+(d-6)6!d^{-6}=O(d^{-4}).
\end{equation}

If $d\ell \le k < d(\ell+1)$ for some $\ell \in \N$, then for any  $x \in \Z_+^d$
with $d(x,0)=k$,
\begin{equation}
\label{eq: 4.5'}
P(S_k=x)={k \choose x_{1},\ldots,x_{d}}d^{-k} \le \frac{k!}{\ell!^{d-k+d\ell}(\ell+1)!^{k-d\ell}}d^{-k}=\frac{(d\ell)!}{\ell!^dd^{d\ell}}\prod_{i=1}^{k-d\ell}\frac{d\ell+i}{d(\ell+1)}\le
\frac{(d\ell)!}{\ell!^dd^{d\ell}}.
\end{equation} 
By Stirling's formula (e.g$.$ $\cite{feller2008introduction}$ page 54),
$$e^{-1/13} \le \frac{n!}{n^ne^{-n}\sqrt{2\pi n}} \le 1, \text{ for all }n \in \N.$$
Plugging this estimate in (\ref{eq: 4.5'}), we get by (\ref{eq: 4.4'}) that:
\begin{equation}
\label{eq: 4.6'}
\begin{split}
\Pr(\tau > d) \le \sum_{\ell=1}^{\infty} \Pr(d\ell \le \tau < d(\ell+1)) \le \sum_{\ell=1}^{\infty}d\frac{(d\ell)!}{\ell!^dd^{d\ell}}
\\ \le \sum_{\ell=1}^{\infty} \frac{(dl)^{d\ell}d(2 \pi d\ell)^{1/2}}{(\ell^{\ell}e^{-\frac{1}{13}}\sqrt{2
\pi \ell})^{d}d^{d\ell}} = d^{3/2}\sqrt{2 \pi }\left(\frac{e^{\frac{1}{13}}}{\sqrt{2
\pi}}\right)^{d} \sum_{\ell=1}^{\infty}\ell^{(1-d)/2}.
\end{split}
\end{equation} 
The last sum is finite since $d \ge 4$. Since $e^{1/13} < \sqrt{2 \pi}$,
the last expression approaches zero exponentially rapidly as $d \to \infty$.

In conclusion, $\Pr(\tau < \infty) \le 1/d+(1/d)^{2}+O(d^{-3})$.
The assertion of the lemma now follows from the strong Markov property, applied to the random walk  $(S_k-S'_k)_{k=0}^{\infty}$.
\end{proof}
We adopt the convention that for any $i \in \N$, $O(d^{-i})$ can be a negative term whose absolute value is $O(d^{-i})$ in the usual sense.
\begin{lemma}
\label{lem: EIT2}
Let  $(S_k)_{k=0}^{\infty}$ and $(S'_k)_{k=0}^{\infty}$ be two independent random walks with distribution
$\mu_d$. Denote
\begin{equation*}
\begin{split}
p_{1,2,3,4}&:=\Pr[\{\exists n>2,
d(S_n,S'_n)=2 \} \mid S_2=e_1+e_{3},S'_2=e_2+e_{4}], \\ p_{1,2}&:=\Pr[\{\exists
n>2, d(S_n,S'_n)=2 \} \mid S_2=2e_1,S'_2=2e_2], \\ p_{1,2,3}&:=\Pr[\{\exists n>2,
d(S_n,S'_n)=2 \} \mid S_2=2e_1,S'_2:=e_2+e_{3}],
\\ a_2&:= \Pr[\{\exists n>1,
d(S_n,S'_n)=0 \} \mid  S_1=e_1,S'_1=e_2].
\end{split}
\end{equation*}
Then,
\begin{equation}
\label{eq: transitionprobabilities}
\begin{split}
a_{2} & = d^{-2}+O(d^{-3}),
\\p_{1,2}& = d^{-2}+O(d^{-3}),
\\ p_{1,2,3}& = 2d^{-2}+O(d^{-3}),
\\ p_{1,2,3,4}& = 4d^{-2}+O(d^{-3}).
\end{split}
\end{equation}
\end{lemma}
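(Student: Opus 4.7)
The plan is to recast each of the four probabilities as a first-passage estimate for the \emph{difference walk} $D_n := S_n - S'_n$, which is a random walk on $\mathbb{Z}^d$ whose single step equals $0$ with probability $1/d$ and $e_i - e_j$ with probability $1/d^{2}$ for each pair $i \neq j$. Since $d(S_n, S'_n) = \|D_n\|_1$, each of the four quantities asks for the probability that $D_n$ eventually reaches $L^{1}$-norm $0$ (for $a_2$) or $2$ (for the $p_{\cdot}$'s), starting from a specific configuration $D_{n_0}$ of norm $2$ or $4$. The proof will follow the same pattern as the proof of Lemma \ref{lem: EIT1}.

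I would first extract the leading $c\,d^{-2}$ term as the one-step contribution: enumerate the pairs $(I, J) \in [d]^2$ (the next pair of increments of $S$ and $S'$) for which $D_{n_0} + e_I - e_J$ has the target norm. A direct check yields $(I,J)=(2,1)$ alone for both $a_2$ and $p_{1,2}$; the two pairs $(2,1)$ and $(3,1)$ for $p_{1,2,3}$; and all four elements of $\{2,4\}\times\{1,3\}$ for $p_{1,2,3,4}$. These produce the leading contributions $d^{-2}, d^{-2}, 2d^{-2}, 4d^{-2}$, matching the statement.

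Next I would bound the remaining (step-$\ge 2$) contribution by
$\sum_{m \ge 2} \Pr[\widetilde{D}_m + D_{n_0} \in H]$, where $H$ is the singleton $\{0\}$ for $a_2$ and the $d(d-1)$-element set of norm-$2$ vectors for the $p_{\cdot}$'s, and $\widetilde{D}_m$ is the difference walk started at $0$. For $m$ large I would use the convolution inequality $\Pr[\widetilde{D}_m = u] \le \max_x \Pr[\widetilde{S}_m = x]$ together with the Stirling estimates from the proof of Lemma \ref{lem: EIT1}, summed over the $|H|$ targets; this yields $O(d^{-3})$ once $m \ge 3$ for $a_2$ and once $m \ge 5$ for the $p_{\cdot}$'s.

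The hard part is therefore the intermediate regime $m \in \{2, 3, 4\}$ in the $p_{\cdot}$-cases, where multiplying the Stirling bound by $|H| = d(d-1)$ is too weak. There I would enumerate directly: writing $\sum_{i=1}^{m}(e_{I_i} - e_{J_i}) = v - D_{n_0}$ as a multiset identity between two multisets of size $m + O(1)$ (the $I_i$'s augmented by the negative support of $v - D_{n_0}$ on one side, and the $J_i$'s augmented by the positive support on the other), the fact that $D_{n_0}$ and $v$ together contribute only $O(1)$ fixed labels forces all but $O(1)$ of the $2m$ variables, leaving only $O(d^{m-1})$ admissible ordered tuples out of $d^{2m}$. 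Summing over the $O(d^2)$ targets gives $O(d^{1-m})$, which is $O(d^{-3})$ at $m = 4$; for $m \in \{2, 3\}$ an even sharper bound holds because a target $v$ is reachable only if $\|v - D_{n_0}\|_1 \le 2m$, which eliminates almost all of the $d(d-1)$ values of $v$, so a finite case analysis shows the contributions are again $O(d^{-3})$ (in fact $O(d^{-3})$ at $m = 2$ and $O(d^{-4})$ at $m = 3$). Combining the direct counts for $m \in \{2, 3, 4\}$ with the Stirling tail for $m \ge 5$ gives the claimed $O(d^{-3})$ remainder, completing the proof.
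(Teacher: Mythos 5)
Your proposal is correct, but it takes a genuinely different route from the paper's. The paper avoids the large target set entirely by exploiting a strong-Markov factorization: since the difference walk $D_n = S_n - S'_n$ changes $\|D_n\|_1$ by $0$ or $\pm 2$ at each step, reaching distance $0$ from distance $4$ requires first passing through distance $2$, so $\Pr[\tau<\infty \mid d(S_2,S'_2)=4] = p_{\cdot}\cdot a_2$ (with the second factor equal to $a_2$ for \emph{any} distance-$2$ configuration by lattice symmetry). The paper therefore computes the coalescence probability — a first-passage to the singleton $\{\mathbf{0}\}$, which it estimates exactly as in Lemma~\ref{lem: EIT1} — and divides by $a_2$. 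This sidesteps the $|H|\sim d^2$ multiplicity that makes your intermediate-$m$ regime delicate. Your direct approach (one-step leading term plus a union-bound remainder over the norm-$2$ set) also works; the one-step enumeration correctly recovers $d^{-2},d^{-2},2d^{-2},4d^{-2}$, and your combination of the constraint $\|v - D_{n_0}\|_1 \le 2m$ with the ``fixed-label'' multiset count to handle $m\in\{2,3,4\}$ is sound and does give $O(d^{-3})$. The trade-off is that the paper's factorization needs only singleton hitting estimates already in hand, while your direct route needs the extra bookkeeping for the $\Theta(d^2)$ targets. One small omission on your side: for $a_2$ the Stirling/convolution bound you invoke only kicks in at $m\ge 3$ (at $m=2$ it gives merely $O(d^{-2})$), so the $m=2$ term for $a_2$ also needs the short direct enumeration — which gives $\Pr[\widetilde D_2 = e_2-e_1] \le 4d\cdot d^{-4} = O(d^{-3})$ and closes that case — but you only flag the intermediate-$m$ regime as hard for the $p_{\cdot}$'s.
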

\begin{proof}
Let $\tau:=\inf \{k \ge 1 : S_{k}=S'_k \}$. By symmetry of the lattice $$a_2=\Pr[\tau<\infty \mid \tau>1]=\frac{\Pr[1<\tau<\infty
]}{\Pr[\tau>1]} = \frac{d^{-2}+O(d^{-3})}{1-d^{-1}}=d^{-2}+O(d^{-3}).$$ Let $\tau':=\tau-4.$
Note that if $d(S_2,S'_2)=4$, then $\tau' \ge 0$.  Moreover, $\Pr[\tau'=0 \mid  (S_2,S'_{2})=(2e_1,2e_2)]=d^{-4}$, $\Pr[\tau'=0 \mid  (S_2,S'_2)=(2e_1,e_2+e_{3})]=2d^{-4}$ and $\Pr[\tau'=0 \mid  (S_2,S'_2)=( e_1+e_{3},e_2+e_{4})]=4d^{-4}$. Similarly to the proof of Lemma \ref{lem: EIT1}, for any $(z,z') \in \{(2e_1,2e_2),(2e_1,e_2+e_{3}),( e_1+e_{3},e_2+e_{4}) \}$ we have that
\begin{equation*}
\begin{split}
& \Pr[1 \le \tau' < \infty \mid (S_2,S'_{2})=(z,z') ] \le \sum_{k=3}^{\infty}\Pr[ S_{k+2}-S'_{k+2}=\mathbf{0}
\mid (S_2,S'_{2})=(z,z') ]=  \\ & \sum_{k=3}^{\infty}\Pr[ S_{k}-S'_{k}=z'-z] \le 3!^{2}d^{-5}+4!^{2}d^{-6} +
\sum_{k=5}^{\infty}\max_{x} \Pr(S_{k}=x)=O(d^{-5}).
\end{split}
\end{equation*}
 By the strong Markov property, $$p_{1,2}:=\frac{\Pr[\tau'<\infty \mid  (S_2,S'_{2})=(2e_1,2e_2)]}{a_{2}}=\frac{d^{-4}+O(d^{-5})}{d^{-2}+O(d^{-3})}=d^{-2}+O(d^{-3}).$$
The proofs of the last two equations in (\ref{eq: transitionprobabilities}) are concluded in the same manner.  
\end{proof}

\emph{Proof of Theorem \ref{thm: Z^d}:}
Let $\gamma \in \Gamma_{2k-1}^{\mathrm{mon}}$ (where $\Gamma_{2k-1}^{\mathrm{mon}}$ is as in Definition \ref{def: Monotonepaths}). To have symmetry in our construction, for reasons that shall soon be clear, we define $\gamma_{0}:=-e_1$ and $\gamma_{2k+1}:=\gamma_{2k}+e_1$. For any $i \in [k]$, we define $$M_{2i-1}(\gamma):=|\{u \sim \gamma_{2i-1} : u \neq \gamma_{2i-2}, X_{u}<X_{\gamma_{2i-1}}
\}|,$$ and $$M_{2i}(\gamma):=|\{u
\sim \gamma_{2i} : u \neq \gamma_{2i+1}, X_u<X_{\gamma_{2i}} \}|.$$ Define $$A_{i}(\gamma):=\{M_{2i-1} (\gamma)\le 2 \} \cap \{M_{2i} (\gamma)\le 2 \},$$ and denote $$A(\gamma):=\bigcap_{i=1}^{k}A_i(\gamma).$$ By construction, on $A(\gamma)$, $\gamma$ is contained in $T_4(\Z^d)$. Notice that  $\gamma_{2i-1}$ and $\gamma_{2i}$ do not have a common neighbor, for all $i \in [k]$. Whence, similarly to the proof of Theorem \ref{thm: T3fortrees}, we can calculate $\Pr[A_{i}(\gamma)]$ by a direct calculation which is completely elementary.
\begin{proposition}
\label{prop: marginalsZd}
For all $i \in [k]$ we have that $\Pr[A_{i}(\gamma)] \ge (a /d)^{2}$ for some absolute
constant $a>1$.
\end{proposition}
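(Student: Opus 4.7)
The plan is to compute $\Pr[A_i(\gamma)]$ essentially in closed form via Poisson approximation for large $d$ and verify that the leading coefficient, when multiplied by $d^2$, strictly exceeds $1$. The argument is uniform in $i\in[k]$; the boundary conventions $\gamma_0 = -e_1$ and $\gamma_{2k+1} = \gamma_{2k}+e_1$ are arranged precisely so that the two endpoints are handled by the same calculation as an interior $i$.

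The structural input is that two adjacent vertices in $\Z^d$ have no common neighbor: writing a hypothetical common neighbor of $u$ and $u+e_j$ as $u+\alpha = u+e_j+\beta$ with $\alpha,\beta\in\{\pm e_\ell:\ell\in[d]\}$ forces $\alpha-\beta=e_j$, an equation with no solution among pairs of signed standard basis vectors. Set $\mathcal{N}_{2i-1}:=\{u\sim\gamma_{2i-1}:u\neq\gamma_{2i-2}\}$ and $\mathcal{N}_{2i}:=\{u\sim\gamma_{2i}:u\neq\gamma_{2i+1}\}$; each has $2d-1$ elements, with $\gamma_{2i}\in\mathcal{N}_{2i-1}$ and $\gamma_{2i-1}\in\mathcal{N}_{2i}$ (since $\gamma$ is monotone). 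The no-common-neighbor fact implies that $\mathcal{N}'_{2i-1}:=\mathcal{N}_{2i-1}\setminus\{\gamma_{2i}\}$ and $\mathcal{N}'_{2i}:=\mathcal{N}_{2i}\setminus\{\gamma_{2i-1}\}$ are disjoint, each of size $2d-2$, and both disjoint from $\{\gamma_{2i-1},\gamma_{2i}\}$.

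Conditioning on $X_{\gamma_{2i-1}}=x$ and $X_{\gamma_{2i}}=y$, one uses the $(x,y)\leftrightarrow(y,x)$ symmetry to restrict to $\{x<y\}$ at the cost of a factor of $2$. On this event, $\gamma_{2i}$ is older than $\gamma_{2i-1}$ and does not contribute to $M_{2i-1}$, while $\gamma_{2i-1}$ contributes exactly $1$ to $M_{2i}$. By the disjointness just established, the remaining counts are independent $\mathrm{Bin}(2d-2,x)$ and $\mathrm{Bin}(2d-2,y)$, giving
\begin{equation*}
\Pr[A_i(\gamma)] \;=\; 2\int_{0<x<y<1} \Pr[\mathrm{Bin}(2d-2,x)\le 2]\,\Pr[\mathrm{Bin}(2d-2,y)\le 1]\,dx\,dy.
\end{equation*}

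Substituting $u=(2d-2)x$, $v=(2d-2)y$ and invoking the Poisson limit $\Pr[\mathrm{Bin}(n,t/n)\le k]\to \Pr[\mathrm{Poi}(t)\le k]$ (uniform on compacts, with standard exponential tail bounds on the Binomial providing dominated convergence) yields
\begin{equation*}
(2d-2)^2\Pr[A_i(\gamma)] \;\xrightarrow[d\to\infty]{}\; 2\int_{0<u<v<\infty} e^{-u}\bigl(1+u+\tfrac{u^2}{2}\bigr)\,e^{-v}(1+v)\,du\,dv \;=\; \tfrac{39}{8},
\end{equation*}
the final evaluation being a routine iterated integration using $\int_0^\infty e^{-2v}v^k\,dv = k!/2^{k+1}$. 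Since $(2d-2)^2/d^2\to 4$, this gives $d^2\Pr[A_i(\gamma)]\to 39/32>1$, so any absolute constant $a$ with $1<a<\sqrt{39/32}$ witnesses the proposition for all sufficiently large $d$, uniformly in $i$. The only nontrivial step is identifying the independence structure through the no-common-neighbor observation; the Poisson-limit integration is elementary.
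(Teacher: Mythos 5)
Your proof is correct, and its structural core is the same as the paper's: the boundary conventions $\gamma_0=-e_1$, $\gamma_{2k+1}=\gamma_{2k}+e_1$ make every $i$ identical, and the fact that adjacent vertices of $\Z^d$ have no common neighbour makes the two outside neighbourhoods of $\gamma_{2i-1},\gamma_{2i}$ disjoint, so that conditionally on the two ages the counts are independent Binomials. Where you differ is in the evaluation: the paper performs an exact case analysis (splitting according to whether each vertex has at most one, or exactly two, younger outside neighbours) and obtains a closed rational expression in $d$, valid for every $d$, which it bounds below by $9/(8d^2)$; you keep the conditional-Binomial integral and pass to the Poisson limit, obtaining the sharp asymptotic $d^2\Pr[A_i(\gamma)]\to 39/32$, which yields the bound only for all sufficiently large $d$. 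Since Theorem \ref{thm: Z^d} is asserted only for large $d$ and the downstream use in Proposition \ref{prop: PA(gamma)intersectionA(gamma')} needs only some absolute $a>1$ in that regime, this loss is immaterial. Incidentally, your constant is the correct one: your exact identity $\Pr[A_i(\gamma)]=2\int_{0<x<y<1}\Pr[\mathrm{Bin}(2d-2,x)\le 2]\Pr[\mathrm{Bin}(2d-2,y)\le 1]\,dx\,dy$ evaluates (e.g.\ at $d=2$ it gives $1/2$) to an expression whose asymptotics are $\tfrac{39}{32}d^{-2}$, whereas the paper's displayed formula sums to $\tfrac{37}{32}d^{-2}(1+o(1))$ because its middle term, described as covering both symmetric cases, is written without the corresponding factor $2$; restoring that factor reproduces your answer, and in either case the stated lower bound $(a/d)^2$ with an absolute $a>1$ holds.
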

\emph{Proof of Proposition \ref{prop: marginalsZd}:} For the sake of concreteness, we show that for $ i \in [k]$, 
\begin{equation*}
\Pr[A_{i}(\gamma)]=\left(\frac{2}{2d-1} \right)^{2}+\frac{(2d-3)}{(4d-2)(4d-3)(2d-1)}+\frac{3(2d-3)}{(4d-2)(4d-3)(4d-5)} >\frac{9}{8d^{2}}.
\end{equation*}
 Before explaining this inequality we note that the exact term $9/8$ shall not be used in what comes. Denote
\begin{equation*}
M_{j}'(\gamma):=|\{u \sim \gamma_{j} : u \notin \gamma, X_{u}<X_{\gamma_{j}}\}|,\, j=2i-1,2i.
\end{equation*}
The first term above comes from the case that $M_{2i-1}'(\gamma),M_{2i}'(\gamma) \le 1$. The middle term comes from the case that $M_{2i}' (\gamma)= 2$, $M_{2i-1}' (\gamma)\le 1$ and $\gamma_{2i}$ is younger than all of the vertices in $\{u: u \sim \gamma_{2i-1}, u \notin \gamma \} \cup \{\gamma_{2i-1} \}$ and the corresponding case in which the roles of $2i$ and $2i-1$ are reversed. The last term comes from the case that $M_{2i}' (\gamma)= 2$, $M_{2i-1}' (\gamma)= 1$ and the unique neighbor of $\gamma_{2i-1}$ not belonging to $\gamma$ which is younger than $\gamma_{2i-1}$  is also younger than $\gamma_{2i}$ which in turn is younger than $\gamma_{2i-1}$, together with the corresponding case where the roles of $2i$ and $2i-1$ are reversed. \qed
\vspace{2mm}

Note that the events $A_1(\gamma),\ldots,A_k(\gamma)$ are usually not independent, since vertices of distance two in $\gamma$ can have a common neighbor. Hence we need the following proposition. Recall that in our convention $v \sim A \subset V$ iff $d(v,A)=1$ (in particular $v \notin A$).
\begin{proposition}
\label{prop: pathsproband domination}
Let $\gamma \in \Gamma_{2k-1}^{\mathrm{mon}}$. Denote $X:=(X_a : a \sim \gamma)$, $Y:=(X_u : u \in \gamma)$.
\begin{itemize}
\item[(i)]
The events $A_1(\gamma),\ldots,A_k(\gamma)$ are positively correlated.
\item[(ii)]
The conditional distribution of $X$, given $A(\gamma)$, stochastically dominates its unconditional distribution.
\end{itemize} 
\end{proposition}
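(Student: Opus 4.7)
The plan is to condition on $Y$ and then apply the Harris--FKG inequality in the off-path coordinates. Two structural observations set up the argument. First, since the definitions of $M_{2i-1}$ and $M_{2i}$ explicitly exclude $\gamma_{2i-2}$ and $\gamma_{2i+1}$ respectively, the event $A_i(\gamma)$ depends only on $X_{\gamma_{2i-1}}, X_{\gamma_{2i}}$ and on $X_u$ for those $u \notin \gamma$ with $u \sim \gamma_{2i-1}$ or $u \sim \gamma_{2i}$; in particular no on-path age from outside the pair $\{2i-1, 2i\}$ enters $A_i(\gamma)$. Consequently $\Pr[A_i(\gamma)\mid Y]$ depends on $Y$ only through the two coordinates $X_{\gamma_{2i-1}}, X_{\gamma_{2i}}$ (the off-path ages, being independent of $Y$, may be integrated out), and since the pairs are disjoint across $i$, the random variables $\Pr[A_1(\gamma)\mid Y], \ldots, \Pr[A_k(\gamma)\mid Y]$ are mutually \emph{independent}. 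Second, once $Y$ is fixed, each $A_i(\gamma)$ is an \emph{increasing} event in $X = (X_a : a \sim \gamma)$, because raising any $X_a$ can only shrink the sets counted by $M_{2i-1}$ and $M_{2i}$; hence $A(\gamma) = \bigcap_i A_i(\gamma)$ is also increasing in $X$ once $Y$ is fixed.

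Part (ii) uses only the second observation. Since $X$ and $Y$ are independent, applying the Harris inequality conditionally on $Y$ to an arbitrary increasing event $E$ depending only on $X$ gives
\begin{equation*}
\Pr[E \cap A(\gamma) \mid Y] \;\ge\; \Pr[E \mid Y]\,\Pr[A(\gamma) \mid Y] \;=\; \Pr[E]\,\Pr[A(\gamma) \mid Y].
\end{equation*}
Taking expectations and dividing by $\Pr[A(\gamma)]$ yields $\Pr[E \mid A(\gamma)] \ge \Pr[E]$, which is precisely the claimed stochastic domination.

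Part (i) combines both observations. The conditional Harris inequality gives $\Pr[A(\gamma) \mid Y] \ge \prod_{i=1}^{k} \Pr[A_i(\gamma) \mid Y]$ almost surely, and the mutual independence of the factors on the right then yields, by taking expectations,
\begin{equation*}
\Pr[A(\gamma)] \;\ge\; \mathbf{E}\Bigl[\,\prod_{i=1}^{k} \Pr[A_i(\gamma) \mid Y]\,\Bigr] \;=\; \prod_{i=1}^{k} \Pr[A_i(\gamma)],
\end{equation*}
and the same calculation applied to any subcollection of the $A_i(\gamma)$'s gives full joint positive correlation. The principal difficulty, and the reason one cannot apply FKG directly on the full product space, is that $A_i(\gamma)$ is not monotone in $X_{\gamma_{2i-1}}$ or $X_{\gamma_{2i}}$: raising either age increases the corresponding $M$ while simultaneously decreasing its partner's contribution to the other $M$ in the same pair. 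Freezing $Y$ neutralizes this non-monotone behavior in the on-path variables, and the disjointness of the pair-blocks $\{2i-1, 2i\}$ across $i$ conveniently replaces the lost monotonicity with the independence needed to commute expectation with the product.
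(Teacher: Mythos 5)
Your proof is correct, and it takes a genuinely different---and arguably cleaner---route than the paper's. The paper partitions the sample space by the vector $w \in \{1,2\}^k$ recording the relative order of $X_{\gamma_{2i-1}}$ and $X_{\gamma_{2i}}$ within each pair; it observes that conditionally on the event $S_w$ each indicator $f_j = 1_{A_j(\gamma)}$ is monotone (decreasing) in the on-path ages $Y$; and it then completes the required factorization over $j$ by invoking Theorem~\ref{thm: affiliation} (affiliated random variables on the sublattice $S_w$) together with the symmetry $\Pr[A_j(\gamma) \mid S_w] = \Pr[A_j(\gamma)]$. You instead condition on all of $Y$ at once and observe that $\Pr[A_i(\gamma) \mid Y]$ depends only on the disjoint block $(X_{\gamma_{2i-1}}, X_{\gamma_{2i}})$, so that $\Pr[A_1(\gamma) \mid Y], \ldots, \Pr[A_k(\gamma) \mid Y]$ are literally independent random variables and the factorization of the $Y$-expectation is an identity rather than an inequality; the only probabilistic tool needed is Harris applied to $X$ with $Y$ frozen. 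This sidesteps the $S_w$ bookkeeping, the symmetry step, and the affiliation machinery entirely. What the paper's derivation records that yours does not is the structural fact that each $A_j(\gamma)$ becomes monotone in $Y$ once the pair-orderings are fixed; since that observation is not used elsewhere, your shorter route loses nothing, and it correctly isolates both the actual obstruction (failure of monotonicity of $A_i(\gamma)$ in the unconditioned on-path ages) and the feature that rescues the argument (the block-disjoint dependence of $A_i(\gamma)$ on $Y$ across $i$).
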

\emph{Proof of Proposition \ref{prop: pathsproband domination}:}  Fix some $\gamma :=(v_1,\ldots,v_{2k}) \in \Gamma_{2k-1}^{\mathrm{mon}}$. Let $[0,1]^{\gamma}$ (resp.~$[0,1]^{\{a:a \sim \gamma \}}$) be the collection of vectors whose co-ordinates
take values in $[0,1]$ and
are labeled by the set $\gamma$ (resp.~$\{a:a
\sim \gamma \}$). For any $j \in [k]$, let
$f_{j}(X,Y)$ be the indicator of $A_j(\gamma)$. Observe that $f_{1}(X',Y'),\ldots,f_{k}(X',Y')$ are increasing
functions of $X'$ ($X' \in [0,1]^{\{a:a
\sim \gamma \}} $), for any fixed
$Y' \in [0,1]^{ \gamma }$.

For any $w\in\{1,2\}^k$, we define a partial order $\prec_{w} $ on $\gamma$ as follows. For any $j \in [k]$, if $w(j)=1$, then $v_{2j} \prec_{w} v_{2j-1}$ and if    $w(j)=2$, then $v_{2j-1} \prec_{w} v_{2j}$ (and these are the only relations in $\prec_{w}$). Let
$S_{w}$ be the event that for all $i \in [k]$, $X_{v_{2i-1}} < X_{v_{2i}}$ iff $v_{2i-1} \prec_{w} v_{2i}$. For any $w \in \{1,2\}^k$ let $Y_w$ be a random vector
distributed as $Y$ conditioned
on $S_w$. Denote by $\mathbf{E}_{X}$ the expectation with respect
to $X$ where
$Y$ (or $Y_{w}$) is treated as a constant
vector. For each  $w\in\{1,2\}^k$, we say that $Z \in [0,1]^{\gamma}$ respects $\prec_w$ if  $Z_{v_{2i-1}} < Z_{v_{2i}}$
iff $v_{2i-1} \prec_{w} v_{2i}$.

  Let $Z_1,Z_2 \in [0,1]^{\gamma}$ and $w\in\{1,2\}^k$. Note that
if $Z_1 \ge Z_2$ coordinate-wise and both vectors respect $\prec_w$, then for any $j \in [k]$, $f_j(X',Z_{2}) \ge f_j(X',Z_{1})$,
for all $X' \in [0,1]^{\{a:a \sim \gamma \}}$. Consequently, $\mathbf{E}_{X}[f_j(X,Y_{w})]$  (a shorthand for $\mathbf{E}_{X}[f_j(X,Y) \mid S_w]$) is a decreasing function of $Y_{w}$, for any
$j \in [k]$ and $w\in \{1,2\}^k$. Let $\mathbf{E}_{Y_{w}}$ be the expectation
with respect to $Y$ conditioned on $S_w$ (that is, $\mathbf{E}_{Y_w}[\cdot]=\mathbf{E}_{Y}[\cdot \mid S_w] $). Observe that by symmetry we have that  $$\Pr[A_{i}(\gamma) \mid S_w]=\Pr[A_{i}(\gamma) ] , \text{ for any }i \in [k] \text{ and } w \in \{1,2\}^k.$$ Let $G(X)$ be the indicator function of some  increasing event
$B$ with respect to $X$. By an application
of the FKG inequality (first and second inequalities) and of the correlation inequality for \emph{affiliated} random variables from Theorem \ref{thm: affiliation} (third inequality) we have that,
\begin{equation}
\label{eq: FKGonpaths}
\begin{split}
& \Pr \left[B \cap  A(\gamma) \cap S_w \right] =\Pr[S_w] \mathbf{E}_{Y_w,X} \left[g(X) \prod_{j
\in [k]} f_j (X,Y_w) \right] \\ & =\Pr[S_w] \mathbf{E}_{Y_{w}}\mathbf{E}_{X} \left[g(X) \prod_{j \in [k]} f_j (X,Y_w) \right] \ge \Pr[S_w]\mathbf{E}[g(X)] \mathbf{E}_{Y_{w}}\mathbf{E}_{X} \left[ \prod_{j \in [k]} f_j (X,Y_{w})\right] \\ & \ge \mathbf{E}[g(X)]\Pr[S_w]\mathbf{E}_{Y_{w}} \left[ \prod_{j \in [k]} \mathbf{E}_{X}[f_j (X,Y_{w})] \right] \ge \Pr[B]\Pr[S_w] \prod_{j \in [k]} \mathbf{E}_{Y_{w}} \mathbf{E}_{X}[f_j (X,Y_{w})]   \\ & =  \Pr[B] \Pr[S_w] \prod_{j \in [k]}\Pr[A_{j}(\gamma) \mid S_{w}] = \Pr[B] \Pr[S_w] \prod_{j \in [k]}\Pr[A_{j}(\gamma)].
\end{split}
\end{equation}
Taking $B$ to equal the entire probability space and  summing over all $w \in \{1,2\}^k$  give that
\begin{equation}
\label{eq: poscor}
\Pr[A(\gamma)]=\Pr \left[\bigcap_{j \in [k]}A_{j}(\gamma) \right] \ge  \prod_{j \in [k]}\Pr[A_{j}(\gamma)] .
\end{equation}
Similarly, for any disjoint $I_1,I_2 \subset [k]$, by repeating the calculations in (\ref{eq: FKGonpaths}) and the reasoning leading to (\ref{eq: poscor}), one can show that if we denote $J_i:=\bigcap_{j \in I_i}A_{j}(\gamma)$ ($i=1,2$), then
\begin{equation}
\label{eq: poscor2}
\Pr[J_{1} \cap J_2] \ge \Pr[J_{1}]\Pr[J_2].  
\end{equation}

Moreover, by the first inequality in (\ref{eq: FKGonpaths}),
\begin{equation*}
\Pr \left[B \cap  A(\gamma) \cap S_w \right] \ge  \Pr[S_w]\mathbf{E}[g(X)]
\mathbf{E}_{Y_{w}}\mathbf{E}_{X} \left[ \prod_{j \in [k]} f_j (X,Y_{w})\right] =\Pr[B] \Pr \left[A(\gamma)\cap S_{w}\right],
\end{equation*}
for any $w \in   \{1,2\}^{[k]}$. Summing over all  $w$ we get that $\Pr \left[B \mid A(\gamma)  \right] \ge \Pr[B]$. \qed 
\vspace{2mm}

Set
\begin{equation}
\label{defofZk}
Y_{ \gamma}:=\frac{1_{A(\bar \gamma)}}{\Pr[A(\bar \gamma)]} \text{ for all }\gamma \in \Gamma_{2k-1}^{\mathrm{mon}} \text{ and } Z_{k}:=|\Gamma_{2k-1}^{\mathrm{mon}}|^{-1} \sum_{\bar \gamma \in
\Gamma_{2k-1}}Y_{\bar \gamma}.
\end{equation}
Clearly $\mathbf{E}[Z_{k}]=1$. Similarly to the proof of Theorem \ref{thm: T3fortrees}, in order to prove the assertion of Theorem \ref{thm: Z^d} it suffices to show that for some positive constant $\beta$ (which may depend only on $d$) we have that
\begin{equation}
\mathbf{E}[Z_k^2] \le \beta,
\end{equation}
since then, by Cauchy-Schwarz inequality (or Theorem \ref{thm: payleyzygmund}
for $r=0$)
\begin{equation}
\label{eq: CS} \Pr[Z_{k}>0] \ge \frac{1}{\mathbf{E}[Z_k^2]} \ge \beta^{-1}>0.
\end{equation}
The event that $T_4(\Z^{d}) \cap \Z_{+}^{d}$ contains an infinite monotone path is a tail event. Since it contains the decreasing intersection of the events $(\{Z_k>0\}:k\in \N)$, 
(\ref{eq: CS}) and
the 0-1 law of Lemma \ref{lem: 0-1infinitecluster} imply the assertion of the theorem.

Let $\gamma=(v_1,\ldots,v_{2k}), \gamma'=(v'_1,\ldots,v'_{2k}) \in \Gamma_{2k-1}^{\mathrm{mon}}$. As in the proof of Theorem \ref{thm: T3fortrees}, in order to estimate $\mathbf{E}[Z_k^2] $ from above, we need to estimate $\Pr[A(\gamma)\cap A(\gamma')]/(\Pr[A(\gamma)]\Pr[A(\gamma')]) $ from above.

As before, set $v_{0},v'_0:=-e_1$ and $v'_{2k+1}:=v'_{2k}+e_1$, $v_{2k+1}:=v_{2k}+e_1$. Observe that if $i \in [2k]$ and $d(v_i,v'_i)=2$, then given $A(\gamma)$,
the conditional distribution of $X_{v'_{i}}$ is different than its unconditional distribution only if $d(v'_i,\{v_{i-1},v_{i+1}\})=1$, in which case by Proposition \ref{prop: pathsproband domination} (ii), its conditional distribution stochastically dominates   its unconditional distribution. This can only decrease the probability of the event $A_{\lceil i/2 \rceil}(\gamma')$ (hence plays in our favor).

For any $i \in [2k]$, set
\begin{equation}
\label{eq: N}
N_{i}(\gamma,\gamma')=N_{i}:=\{u \sim v'_i : u \notin \gamma'\cup \{v'_{2k+1} \} , d(u,\gamma) \le 1 \}.
\end{equation}
If $d(v_i,v'_i)=2$, then $|N_i| \le 4$. Note that if $d(v_i,v'_i)=2$, then the distribution of $X_{v'_i}$ and of $(X_u:u \sim v'_i, u \notin N_i \cup \gamma )$ is
unaffected by the occurrence of the event $A(\gamma)$. Similarly to the proof of Theorem \ref{thm: T3fortrees} where we considered the event $B_{\gamma,\gamma'}$, we now define an event which contains $A(\gamma')$, whose conditional distribution, given $A(\gamma)$, is easier to estimate than that of $A(\gamma')$. We define $M_j(\gamma')$ in an analogous manner to the definition of  $M_j(\gamma)$. Namely,  $M_{2i-1}(\gamma'):=|\{u
\sim v'_{2i-1} : u \neq v'_{2i-2}, X_{u}<X_{v'_{2i-1}}
\}|$ and $M_{2i}(\gamma'):=|\{u
\sim v'_{2i} : u \neq v'_{2i+1}, X_u<X_{v'_{2i}} \}|$.  For any $j \in [2k]$, we define
\begin{equation}
\label{eq: K_j}
K_j:=\begin{cases}M_{j}(\gamma), & d(v_j,v_j')=0, \\
|\{u \sim v'_j : u \notin \gamma
\cup N_j,u \nsim \gamma' \setminus \{v_j' \}, X_u<X_{v'_j} \}|, & d(v_j,v_j')=2, \\
M_j(\gamma'), & d(v_j,v_j')>2. \\
\end{cases}
\end{equation}
Define $C_j=C_{j}(\gamma,\gamma'):=\{K_j \le 2 \}$ and $C=C(\gamma,\gamma'):=\bigcap_{i \in [2k]}C_j$. Note that $ A(\gamma) \cap C\supset A(\gamma) \cap A(\gamma')  $. Using the above observations, we get the following inequality.
\begin{proposition}
\label{prop: PA(gamma)intersectionA(gamma')}
Assume $d>4$. Let $a$ be as in Proposition \ref{prop: marginalsZd}.  Let $j_0,j_2 \in \N$ be such that $j_0+j_2 \le 2k$. Assume that $v_i=v'_i$, for any $1 \le i \le j_0$, that $d(v_i,v'_i)=2$, for any $j_0+1 \le i \le j_0 + j_2$, and that $d(v_i,v'_i) \ge 4$, for any $j_{0}+j_{2}+1 \le i \le 2k$.  Then there exists an absolute constant $a'$ such that $1<a' \le a$ and
\begin{equation}
\label{eq: pathsintersectionprobability}
\Pr \left[ A(\gamma)\cap C(\gamma,\gamma') \right]  \le \Pr \left[ A(\gamma) \right]\Pr \left[ A(\gamma') \right](d/a')^{j_{0}}\left(\frac{3d}{a'(2d-7)}
\right)^{j_{2}}.
\end{equation}
\end{proposition}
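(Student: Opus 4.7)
The plan is to reduce bounding $\Pr[A(\gamma)\cap C(\gamma,\gamma')]$ to a per-index comparison with $\Pr[A(\gamma')]$, in the spirit of Theorem~\ref{thm: T3fortrees}. First, decompose $C = C^{(0)} \cap C^{(2)} \cap C^{(\ge 4)}$ according to whether $d(v_j,v_j')$ equals $0$, $2$, or is at least $4$. For $j$ in the coincident block, the event $C_j=\{M_j(\gamma)\le 2\}$ is automatically implied by $A_{\lceil j/2\rceil}(\gamma)$, so $A(\gamma)\cap C = A(\gamma)\cap C^{(2)}\cap C^{(\ge 4)}$, and the task reduces to controlling $\Pr[C^{(2)}\cap C^{(\ge 4)}\mid A(\gamma)]$.

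The second step exploits the careful design of $K_j$ (via the exclusion of $N_j$ and of vertices adjacent to $\gamma'\setminus\{v_j'\}$) to decouple $C^{(2)}$ and $C^{(\ge 4)}$ from $A(\gamma)$. In the far block, $C_j=\{M_j(\gamma')\le 2\}$ depends on ages of $v_j'$ and of its neighbors, which by monotonicity of the paths lie at distance $\ge 2$ from $\gamma$ and hence are independent of $A(\gamma)$. In the distance-$2$ block, by construction the vertices counted by $K_j$ lie at distance $\ge 2$ from $\gamma$, so their ages are independent of $A(\gamma)$; the only possible coupling is through $X_{v_j'}$ when $v_j'$ is a neighbor of $\gamma$, but Proposition~\ref{prop: pathsproband domination}(ii) shows that conditionally $X_{v_j'}$ stochastically dominates its unconditional law, and $\{K_j\le 2\}$ is decreasing in $X_{v_j'}$, so this inequality works in our favor. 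Combining,
\[
\Pr[C^{(2)}\cap C^{(\ge 4)}\mid A(\gamma)]\;\le\;\Pr[C^{(2)}]\,\Pr[C^{(\ge 4)}],
\]
the two factors on the right decoupling from each other by the same distance argument.

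Using the positive correlation of the $A_i(\gamma')$'s from Proposition~\ref{prop: pathsproband domination}(i), one has $\Pr[A(\gamma')]\ge\prod_i\Pr[A_i(\gamma')]$, so it remains to compare $\Pr[C^{(2)}]\Pr[C^{(\ge 4)}]$ with $\prod_i\Pr[A_i(\gamma')]$ pair by pair. Far-block pairs match exactly. Each fully coincident pair pays an extra factor of $1/\Pr[A_i(\gamma')]\le (d/a)^2$ via Proposition~\ref{prop: marginalsZd}, which collected over $j_0/2$ such pairs yields $(d/a')^{j_0}$. For each distance-$2$ pair, an elementary direct computation of $\Pr[K_{2i-1}\le 2]\Pr[K_{2i}\le 2]$ over the pool of at least $2d-7$ \emph{safe} neighbors of each $v_j'$ (once the at most seven vertices in $\gamma\cup\gamma'\cup N_j$ and those shared with neighbors of $\gamma'\setminus\{v_j'\}$ are removed), divided by the $(a/d)^2$ lower bound from Proposition~\ref{prop: marginalsZd}, yields at most $\bigl(3d/(a'(2d-7))\bigr)^2$ per pair. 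Boundary pairs straddling two regimes are handled by hybrid versions of the same comparison.

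The main obstacle will be the rigorous decoupling in the second step: one must verify that conditioning on all of $A(\gamma)$ leaves the safe neighbor ages used in $K_j$ as independent uniforms, and that the only residual dependence, coming from $X_{v_j'}$ when $v_j'$ is adjacent to $\gamma$, is correctly controlled by Proposition~\ref{prop: pathsproband domination}(ii). A secondary obstacle is pinning down the constant $2d-7$ via a careful enumeration of which neighbors of $v_j'$ are removed in the definition of $K_j$, which follows the same case analysis already used in the proofs of Propositions~\ref{prop: marginalsZd} and~\ref{prop: pathsproband domination}.
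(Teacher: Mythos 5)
Your decomposition into the coincident, distance-$2$ and distance-$\ge 4$ blocks, the absorption of the coincident block into $A(\gamma)$, the decoupling of the distance-$2$ block via the safe pools of size at least $2d-7$ together with Proposition \ref{prop: pathsproband domination}(ii), and the pricing of each coincident (resp.\ distance-$2$) pair by $(d/a)^2$ (resp.\ $\bigl(3d/(a(2d-7))\bigr)^2$) via Proposition \ref{prop: marginalsZd} is exactly the paper's argument, including the absorption of boundary/parity effects into the constant $a'$.

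There is, however, one step in your final assembly that does not work as literally stated. You invoke $\Pr[A(\gamma')]\ge\prod_i\Pr[A_i(\gamma')]$ and then propose to compare $\Pr[C^{(2)}]\Pr[C^{(\ge 4)}]$ with the full product $\prod_i\Pr[A_i(\gamma')]$ ``pair by pair,'' asserting that far-block pairs ``match exactly.'' At the level of events the far block does match ($C_{2i-1}\cap C_{2i}=A_i(\gamma')$ there), but at the level of probabilities the pairwise comparison would require $\Pr[C^{(\ge 4)}]=\Pr\bigl[\bigcap_{\text{far }i}A_i(\gamma')\bigr]\le\prod_{\text{far }i}\Pr[A_i(\gamma')]$, i.e.\ negative correlation across the far block, which is the opposite of what Proposition \ref{prop: pathsproband domination}(i) gives. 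The correct bookkeeping, and what the paper does, is to keep the far block intact as a single group: use the grouped correlation inequality (\ref{eq: poscor2}) to write $\Pr[A(\gamma')]\ge\Pr[A_0]\Pr[A_2]\Pr[A_{\ge 4}]$ with $A_{\ge 4}=C^{(\ge 4)}$, so that $\Pr[C^{(\ge 4)}]\le\Pr[A(\gamma')]\,(d/a)^{j_0}(d/a)^{j_2}$ by the marginal lower bounds on $\Pr[A_0],\Pr[A_2]$, and only then multiply by $\Pr[C^{(2)}]\le\bigl(3/(2d-7)\bigr)^{j_2}$. With this replacement your argument coincides with the paper's. (Your worry about the decoupling of $C^{(\ge 4)}$ from $A(\gamma)$ near the block interface is shared by the paper, which asserts the relevant distance bound and lets $a'$ absorb the boundary cases; you are not worse off there.)
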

We note that the exact value of $a'$ is not important for our application, and the key point is that $d/a'<d$ and that for $d
\ge 8$, we have that $\frac{3d}{a'(2d-7)} <d/3< p^{-1}  $, where in the notation
of Lemmata \ref{lem: EIT1} and \ref{lem: EIT2}, $p:=\Pr[d(S_{i+1},S'_{i+1})=2
\mid d(S_{i},S'_{i})=2]$. 
\vspace{2mm}

\emph{Proof of Proposition \ref{prop: PA(gamma)intersectionA(gamma')}:} For simplicity, assume that both $j_0$ and $j_2$ are divisible
by 2, in which case we can take $a'=a$. The other cases are treated in an analogous manner and the possible small difference in the probabilities between the cases can be absorbed by taking some sufficiently smaller $a'$ such that $1<a'<a$.  

We first observe that by construction (namely, by (\ref{eq: K_j})) if $j_0+j_2 +1\le 2j-1$, then $C_{2j-1}\cap C_{2j}=A_j(\gamma')$. Denote,
\begin{equation*}
D_0:=\bigcap_{j=1}^{j_0}C_j,\, D_2:=\bigcap_{j=j_{0}+1}^{j_0+j_2}C_j \text{ and } D_{\ge 4}:=\bigcap_{j=j_{0}+j_2+1}^{2k-1}C_j.
\end{equation*}
We argue that the events $A(\gamma) \cap D_{2}$ and  $D_{\ge 4}$ are independent. To see this, first note that $d(\{v'_{i} : j_0+j_2+1 \le i \le 2k-1 \},\gamma) \ge 3$. From the definition of $K_j$ in the case that $d(v_j,v'_j)=2$, the event $D_{2}$ does not depend on the ages of the vertices in $\{v'_{i} : j_0+j_2+1 \le i \le 2k-1 \} $ and also does not depend on the ages of any of their neighbors, apart from $v'_{j_0+j_2}$, whose age is irrelevant for $D_{\ge 4}$.

Moreover, from the definition of $K_i$ when $d(v_i,v_i')=0$, $D_0 \subset A(\gamma)$.
Hence
\begin{equation}
\label{eq: D2A(gamma)ind}
\Pr[A(\gamma) \cap C]=\Pr[A(\gamma) \cap D_2 \cap D_{\ge 4}]=\Pr[A(\gamma)]\Pr[D_2 \mid A(\gamma) ]\Pr[D_{\ge 4}]
\end{equation}
Let $(X_{v'_{j}}':j_0+1 \le j
\le j_0+j_2)$ be i.i.d$.$ $\mathrm{Uniform}[0,1]$ random variables, independent of $(X_v :v \in \Z^d)$. For any $j_0+1 \le j
\le j_0+j_2$, denote $$F_{j}(\gamma,\gamma')=F_j:=\{u
\sim v'_j : u \notin \gamma'
\cup N_j,u \nsim (\gamma' \setminus \{v_j' \}) \}$$ (where the set $N_j$ is defined in (\ref{eq: N})) and set $$K_j':=|\{u \in F_j : X_u<X_{v'_j}' \}|.$$ Define
\begin{equation*}
B_j:=\{K_j' \le 2 \} \text{ and set } B:=\bigcap_{j=j_0+1}^{j_0+j_2}B_j.
\end{equation*}
Note that by construction $|F_j| \ge 2d - 8$, for any $j_0+1 \le j
\le j_0+j_2$. Moreover, by
construction, the sets $F_{j_{0}+1},\ldots,F_{j_{0}+j_2}$ are disjoint and are also disjoint from the
set of vertices that the event $A(\gamma) $ depends
on their ages. Using this and Proposition \ref{prop: pathsproband domination} (ii) we get that
\begin{equation}
\label{eq: D2}
\Pr \left[D_{2} \mid A(\gamma)\right] \le \Pr \left[B \mid A(\gamma)\right] \le \left(\frac{3}{2d-7}\right)^{j_2}.
\end{equation}
Denote
\begin{equation*}
A_0:=\bigcap_{j \in [j_{0}/2]}A_{j}(\gamma'),\,
A_2:=\bigcap_{j \in [\frac{j_{0}}{2}+1]}^{\frac{j_0+j_2}{2}}A_{j}(\gamma')
\text{ and } A_{\ge 4}:=\bigcap_{j \in [\frac{j_{0}}{2}+1]}^{\frac{j_0+j_2}{2}}A_{j}(\gamma').
\end{equation*}
Note that $A_{\ge 4}=D_{\ge 4}$. By Proposition \ref{prop: marginalsZd},
$\Pr[A_{0}] \ge (a/d)^{j_{0}}$, $\Pr \left[A_{2}
\right] \ge (a/d)^{j_{2}}$. By (\ref{eq: poscor2}) the events $A_0$, $A_2$ and $A_{\ge 4}$ are positively correlated. Thus  by (\ref{eq: D2A(gamma)ind}) and (\ref{eq: D2}) we get that
\begin{equation*}
\begin{split}
\Pr \left[A(\gamma) \cap C \right] & \le \Pr \left[A(\gamma) \right]\left(\frac{3}{2d-7}\right)^{j_2} \Pr \left[D_{\ge 4} \right] \\ & \le \Pr \left[A(\gamma)
\right] \Pr \left[A_{0} \right](d/a)^{j_{0}} \Pr \left[A_{2} \right](a^{-1}d)^{j_{2}} \left(\frac{3}{2d-7}\right)^{j_2} \Pr \left[A_{\ge 4} \right]  \\ & \le \Pr \left[ A(\gamma) \right]\Pr \left[ A(\gamma') \right](d/a)^{j_{0}}\left(\frac{3d}{a(2d-7)}
\right)^{j_{2}}. \qed 
\end{split}
\end{equation*}
Denote the uniform distribution
on $\Gamma_{2k-1}^{\mathrm{mon}}$ by $\nu_{2k-1}$.  Denote the expectation operator
with respect to $\nu_{2k-1} \times \nu_{2k-1}$  by $\mathbf{E}_{2k-1} \times \mathbf{E}_{2k-1}$. Pick two paths $\gamma=(v_1,\ldots,v_{2k}) $ and  $\gamma'=(v'_1,\ldots,v'_{2k})$
in $\Gamma_{2k-1}^{\mathrm{mon}}$ according to $\nu_{2k-1} \times \nu_{2k-1}$, where the choice of the paths is done
independently of the layers model, that is, independently of the ages
of the
vertices. Recall the definition of $Z_k$ in (\ref{defofZk}).
Observe that
\begin{equation}
\label{eq: 2ndmomentofZkviaEIT}
\begin{split}
\mathbf{E}[Z_k^2]&=|\Gamma_{2k-1}^{\mathrm{mon}}|^{-2} \sum_{\gamma,\gamma' \in \Gamma_{2k-1}}\frac{\Pr[A(\gamma)\cap A(\gamma')]}{\Pr[A(\gamma)]\Pr[A(\gamma')]}=\mathbf{E}_{2k-1} \times
\mathbf{E}_{2k-1}\left[ \frac{\Pr[A(\gamma)\cap
A(\gamma')]}{\Pr[A(\gamma)]\Pr[A(\gamma')]}\right] \\ & \le \mathbf{E}_{2k-1} \times
\mathbf{E}_{2k-1}\left[ \frac{\Pr[A(\gamma)\cap
C(\gamma,\gamma')]}{\Pr[A(\gamma)]\Pr[A(\gamma')]}\right]=:\rho_{k},
\end{split}
\end{equation}
where the probability inside the expectation is taken with respect to the layers model for fixed $\gamma,\gamma' \in \Gamma_{2k-1}^{\mathrm{mon}}$
 and the expectation
outside indicates that we take an average according to random $\gamma,\gamma'$ picked according to $\nu_{2k-1} \times \nu_{2k-1}$, independently of the layers model. By (\ref{eq: CS}) and (\ref{eq: 2ndmomentofZkviaEIT}), we only need to find a constant $\beta>0$ such that for any $k$, $w_k \le \beta$. We find such $\beta$ by combining Proposition \ref{prop: PA(gamma)intersectionA(gamma')} with Lemma \ref{lem: EIT2}.

Let $\gamma=(v_{1},\ldots,v_{2k})$ and $\gamma'=(v'_{1},\ldots,v'_{2k})$ be random paths in $\Gamma_{2k-1}^{\mathrm{mon}}$ chosen according to $\nu_{2k-1} \times \nu_{2k-1}$. We say that $\gamma$ and $\gamma'$ are at distance $r$ at time $i$ if $d(v_i,v'_i)=r$. We think about the paths as being exposed one vertex at a time according to the random walks from Definition \ref{def: Monotonepaths}.
Notice that for any $i \in [2k]$, $d(v_i,\gamma') \ge d(v_i,v'_i)-1 $. Whenever $v_{i-1} \neq v'_{i-1}
$ but $v_{i}=v'_{i}$, $\gamma$ and $\gamma'$ intersect each other for a random number of
times (including time $i$) which is stochastically dominated by a $\mathrm{Geometric}(1-d^{-1})$ random variable.

Whenever the two
paths are at distance 2 from each other, they have a chance of $\frac{3d-4}{d^2}$
to stay at distance 2 in the next step. Thus, similarly to the previous case,
whenever  $d(v_{i-1}, v'_{i-1}) \neq 2$, but $d(v_{i},v'_{i})=2$, the two
paths stay at distance 2 from each other for a random number of steps which
is stochastically dominated by a Geometric$(1-3d^{-1})$ random variable. If $d(v_i,v'_{i})=2$, then the conditional probability that $v_{i+1}=v'_{i+1}$,
given that $d(v_{i+1},v'_{i+1})\neq 2$ is by (\ref{eq: transitionprobabilities}) (and the symmetry of the lattice) $$q_{2,0}:=\frac{d^{-2}}{1-(3d-4)d^{-2}}=d^{-2}+O(d^{-3}).$$ Similarly, if   $d(v_i,v'_{i})=2$ then  the conditional probability that $d(v_{i+1},v'_{i+1})=4$, given that  $d(v_{i+1},v'_{i+1})\neq 2$, is  $$q_{2,4}:=1-d^{-2}-O(d^{-3}).$$ Once the paths reach distance 4 from each other, by (\ref{eq: transitionprobabilities})
(and the symmetry of the lattice) the probability
that the paths will ever be at distance $2$ again is  at most $$q_{4,2}:=4d^{-2}+O(d^{-3}).$$ So the number
of times the two paths will ever return from distance 4 to distance $2$ is
stochastically dominated by a $\mathrm{Geometric}(1-4d^{-2}-O(d^{-3}) )$ random variable.
As long as the paths are at distance at least $4$, the events $A_{i}(\gamma)$
and $C_{2i-i}(\gamma,\gamma') \cap C_{2i}(\gamma,\gamma')= A_{i}(\gamma')$ are independent. Hence, the only contributions to $\rho_k$ 
come from time intervals in which the paths stay at distance $0$ from
each other and from time intervals in which the paths stay at distance
2 from each other. To be more precise, one can extend Proposition \ref{prop: pathsproband domination} to cover also cases in which there may be several time intervals in which the paths are at distance 0 or 2, where now $j_0$ and $j_2$ would be the total number of steps that the paths spend at distance $0$ and $2$, respectively, from each other, so that (\ref{eq: pathsintersectionprobability}) still holds. We omit the details.

Let $s_0=s_0(\gamma,\gamma')$
and $s_2=s_2(\gamma,\gamma')$ be the number of time intervals in which the
paths are at distance $0$ or $2$ from each other, respectively. Set
\begin{equation}
\label{eq: p_0,p_2}
\begin{split}
p_0&:= \sum_{j \ge 1} (d/a')^{j}d^{-j+1}=d \frac{1}{a'-1}, \\ p_2&:=
\sum_{j \ge 1} \left(\frac{3d}{a'(2d-7)}
\right)^{j} (3/d)^{j-1} =\left(\frac{3d}{a'(2d-7)}
\right)\frac{1}{1- \frac{9}{a'(2d-7)}}.
\end{split}
\end{equation}
 We now analyze the contribution to $\rho_k$ coming from the time intervals in which the paths are at distance $0$ or $2$ from each other, respectively. Again, we employ the random walk interpretation and think about $\gamma$ and $\gamma'$ as being exposed one vertex at a time. By (\ref{eq: pathsintersectionprobability}) and the aforementioned stochastic dominations by Geometric random variables, each time interval in which the paths
are at distance $0$ or $2$ contributes to $\rho_{k}$ a multiplicative term of at
most $p_0$ or $p_2$, respectively.
Moreover, since in the case $k=\infty$, given $(s_0,s_2)$, the lengths
of the different time intervals are independent, then given
$(s_0,s_2)$, the total contribution
of all the different time intervals is bounded by $p_0^{s_0}p_2^{s_2}$.  Whence, in order to bound $w_k$ it suffices to bound $\mathbf{E}_{2k-1} \times \mathbf{E}_{2k-1}  \left[p_0^{s_0(\gamma,\gamma')}p_2^{s_2(\gamma,\gamma')}\right]$.
 
Let $q_{2,0}, q_{2,4},q_{4,2}$ be as above. Consider the following Markov chain with state space $\{0,2,4,\infty \}$,  whose initial
state is $0$. If the current state of the chain is $0$, then the next state is $2$
$\mathrm{w.p.}$ 1. If the current state of the chain is $2$, then the next state is $0$ $\mathrm{w.p.}$ $q_{2,0}=d^{-2}+O(d^{-3})$ and
otherwise ($\mathrm{w.p.}$ $q_{2,4}$) it is $4$. If the current state of the chain is $4$, then $\mathrm{w.p.}$
$q_{4,\infty}:=1-q_{4,2}=1-4d^{-2}+O(d^{-3})$ the chain moves to an absorbing state $\infty$ and otherwise ($\mathrm{w.p.}$ $q_{4,2}$)
it moves to state $2$. Let $r_0,r_2$ be the number of times this chain visits states $0$ and $2$, respectively, before getting absorbed in $\infty$.  Note that for  any $1 \le k_0 \le k_2 \in \N$
\begin{equation}
\label{estimatingbychain}
\begin{split}
& \Pr[r_0=k_0,r_2=k_2]=q_{2,0}^{k_0-1}q_{4,2}^{k_2-k_0}q_{4,\infty} \\ & =[d^{-2}+O(d^{-3})]^{k_0-1} [4d^{-2}+O(d^{-3})]^{k_2-k_0}[1-4d^{-2}+O(d^{-3})].
\end{split}
\end{equation}
By
the above analysis, $(r_0,r_2)$ stochastically
dominates $(s_0(\gamma,\gamma'),s_2(\gamma,\gamma'))$, when $(\gamma,\gamma')$
is picked according to $\nu_{2k-1} \times \nu_{2k-1}$. Let  $d$ be sufficiently large so that $p_0,p_2 > 1$. Using the aforementioned stochastic
domination on the increasing function $f(x_{0},x_{2})=p_0^{x_0}p_2^{x_2}$  in conjunction
with (\ref{eq: p_0,p_2}) and (\ref{estimatingbychain}), we get that
\begin{equation}
\label{eq: w(gamma,gamm')}
\rho_{k} \le \sum_{k_0,k_2}p_0^{k_0}p_2^{k_{2}}\Pr[s_0=k_0,s_2=k_2] \le \sum_{k_0,k_2}p_0^{k_0}p_2^{k_2}\Pr[r_0=k_0,r_2=k_2] \le bp_0p_2=:\beta,
\end{equation}
for some absolute constant $b$. This was noted earlier to imply (\ref{eq: CS}), which concludes the proof by the paragraph following (\ref{eq: CS}).  \qed
\section{The first 3 layers in random graphs}
In this section we prove Theorem \ref{thm : randomgraphs}. Our approach utilizes an auxiliary random graph, similar to an auxiliary construction considered in \cite{feige2013layers}.
\label{sec: expanders}
\subsection{Preliminaries}
A connected component that contains a linear fraction of the vertices of
a graph is commonly referred to as a {\em giant component}.  There has been extensive work on the formation of giant components
in random graph models and in percolation on random graphs (see for example~\cite{Fount,KSud,Janson,Molloy}).

We now state a few
basic facts about \emph{random graphs with a given degree sequence}. Let $\bar{d}$ be a non-increasing sequence of $n$
nonnegative integers and let $d_i$ be the $i$th element of $\bar{d}$. Throughout, we
assume that $\sum d_i$ is even and that $d_1 \le d$ for some $d$ which does not
depend on $n$.

We consider the random graph model $\mathcal{G}_{n,\bar d}$ in which a graph is sampled according to the uniform distribution over all graphs labeled by the set $[n]$ in which vertex $i$ has degree $d_i$. We indicate that a random graph $G$ has such a distribution by
writing $G \sim \mathcal{G}_{n,\bar
d}$. If  $G \sim \mathcal{G}_{n,\bar
d}$ we say that $G$ is a \emph{random graph with a given degree sequence}
$\bar d$.  In practice, we consider a sequence of degree sequences $\bar d(n)$, where $\bar d(n)=(d_1(n),\ldots,d_{k_{n}}(n))$, and a sequence of graphs $G_n \sim \mathcal{G}_{k_{n},\bar d(n)}$, where $\lim_{n \to \infty}k_n=\infty$. We hide the dependence on $n$ from our notation.  

An intimately related model to $\mathcal{G}_{n,\bar
d}$ is the \emph{configuration model} $\mathcal{P}_{n,\bar{d}}$, which is a model for generating a random multigraph whose vertex set
is labeled by $[n]$ and vertex $i$ has $d_i$ ``half-edges". The half-edges are
combined into edges by choosing uniformly at random a matching of all ``half-edges". We indicate that a random multigraph $G$ has such
a distribution by writing $G \sim \mathcal{P}_{n,\bar
d}$.

Given a multigraph sampled according to the configuration model, Molloy and
Reed~\cite{Molloy} provide a criterion for the existence of a giant component. We refer to the condition in the Theorem \ref{thm:MolloyReed} as the \emph{Molloy-Reed condition}.
The exact statement of their result involves some technical conditions and
parameters that are omitted here due to our bounded degree setup.
\begin{theorem}
\label{thm:MolloyReed}
Let $(\bar d(n):n \in \N)$ be a sequence of degree sequences. Assume that $\sup_{n}d_1(n)<\infty$.  Denote by $\lambda_i(n)$
 the fraction of vertices of degree $i$ in $\bar d(n)$. Let \[Q(\bar{d}(n)) = \sum_{i
\ge 1} \lambda_i(n) i(i - 2).\] Assume further that $Q(\bar d(n)) > \epsilon >0$ for all sufficiently large $n$. Let $G_n \sim \mathcal{P}_{n,\bar d(n)}$. Then the probability
of  $G_{n}$ not having a giant component is exponentially small in $n$.
\end{theorem}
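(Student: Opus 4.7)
The plan is to follow the classical Molloy--Reed argument via an exploration process, strengthened by martingale concentration to yield the claimed exponentially small failure probability; this upgrade is possible precisely because of the uniform bound $\sup_n d_1(n) < \infty$. First I would sample $G_n \sim \mathcal{P}_{n,\bar d(n)}$ together with a breadth-first exploration from a root vertex $v$ by the principle of deferred decisions: whenever the exploration needs the partner of an active half-edge, the partner is drawn uniformly from the pool of currently unmatched half-edges. This yields the correct distribution on $G_n$, and the connected component $C(v)$ coincides with the set of vertices exposed before the pool of active half-edges is exhausted.

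Next I would compare the exploration to a Galton--Watson branching process $\mathcal{T}$ whose offspring distribution is the size-biased degree distribution minus one, that is, the offspring count equals $d-1$ with probability $d\lambda_d / \bar d_{\mathrm{avg}}$, where $\bar d_{\mathrm{avg}} := \sum_i i \lambda_i(n)$. While the explored set has size $o(n)$, the conditional degree of each newly exposed vertex matches this size-biased law up to a $o(1)$ error (half-edge depletion is negligible at that scale), so the exposure tree couples with $\mathcal{T}$ at that scale. The mean offspring is
\[
\sum_d (d-1)\,\frac{d\lambda_d}{\bar d_{\mathrm{avg}}} \;=\; 1 + \frac{Q(\bar d(n))}{\bar d_{\mathrm{avg}}} \;\ge\; 1 + \frac{\epsilon}{d},
\]
so $\mathcal{T}$ is strictly supercritical, with survival probability $\rho > 0$ bounded away from zero uniformly in $n$.

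The core step is to upgrade this coupling to an exponential concentration statement. Because every degree is bounded by $d$, a single step of the exploration changes the number of live half-edges by at most $2d$, so the Doob martingale of the live half-edge count (under the random matching) has bounded differences. Azuma--Hoeffding then implies, for each starting vertex $v$ and suitable constants $\delta,\eta > 0$, that the exploration from $v$ either dies within $O(\log n)$ steps or reaches at least $\delta n$ vertices, with probability $e^{-\Omega(n)}$ of any intermediate outcome. A union bound over $v \in [n]$ yields that at least $(\rho - \eta) n$ vertices lie in components of size $\ge \delta n$, with total failure probability $n \cdot e^{-\Omega(n)} = e^{-\Omega(n)}$.

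Finally, a sprinkling argument merges these large components into a single giant. The standard trick is to set aside a linear number of pairings in the configuration step and use them in a final round to connect components; two components of size $\ge \delta n$ each fail to be joined by a reserved edge with probability at most $e^{-\Omega(n)}$, so by union bound all large components coalesce except with probability $e^{-\Omega(n)}$. The main obstacle is obtaining exponential (rather than merely $1-o(1)$) concentration throughout, which is exactly where the uniform bound $d_1 \le d$ is used: without it the Azuma--Hoeffding increments become unbounded and both the branching-process coupling error and the concentration estimate degrade, leaving only the weaker classical high-probability conclusion.
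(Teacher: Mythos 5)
The statement you were asked to prove is not proved in the paper at all: Theorem~\ref{thm:MolloyReed} is quoted as a black-box result of Molloy and Reed~\cite{Molloy}, with the paper explicitly remarking that ``the exact statement of their result involves some technical conditions and parameters that are omitted here due to our bounded degree setup.'' So there is no in-paper proof to compare against, and I have to evaluate your sketch on its own merits.

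Your overall blueprint (breadth-first exploration of the configuration model by deferred decisions, coupling with the size-biased branching process, concentration, then sprinkling to merge large components) is the right family of techniques, and your computation of the offspring mean $1 + Q/\bar d_{\mathrm{avg}} \ge 1 + \epsilon/d$ is correct. However, the key concentration step as written has a genuine gap, and it is exactly the step where the theorem's stated $e^{-\Omega(n)}$ bound has to come from. You assert that Azuma--Hoeffding gives, \emph{for a single starting vertex $v$}, that the exploration ``either dies within $O(\log n)$ steps or reaches at least $\delta n$ vertices, with probability $e^{-\Omega(n)}$ of any intermediate outcome.'' That is not what Azuma yields. With drift $\mu = \Theta(\epsilon/d)$ and bounded increments $O(d)$, the probability that the live half-edge count, started from a constant, returns to zero at time $t$ is of order $e^{-c t}$; summing over $t$ between $C\log n$ and $\delta n$ gives a bound of order $n^{-cC}$, which is polynomially, not exponentially, small in $n$. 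A union bound over the $n$ starting vertices therefore gives at best a polynomially small failure probability, not $e^{-\Omega(n)}$. Moreover, even granting the per-vertex claim, the inference that ``at least $(\rho-\eta)n$ vertices lie in components of size $\ge \delta n$'' does not follow: the per-vertex dichotomy controls which kind of component each vertex lies in, but says nothing about how many vertices fall into the small class, which is a separate concentration question about a global functional of the matching.

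To get a true $e^{-\Omega(n)}$ bound you need to apply Azuma to a \emph{global} quantity with bounded differences in the matching, not to per-vertex explorations. The standard device is to fix a constant $K = K(d,\epsilon)$, let $N_K$ be the number of vertices lying in components of size at most $K$, and observe that swapping one pair of half-edges changes $N_K$ by at most $O(K)$; since $K$ is a constant (this is where bounded degree is truly essential), Azuma gives $\Pr[N_K > \mathbf{E}[N_K] + \eta n] \le e^{-\Omega(n/K^2)} = e^{-\Omega(n)}$, and the branching-process comparison shows $\mathbf{E}[N_K] \le (1-\rho')n$ for some $\rho' > 0$ once $K$ is large. One then still needs to rule out the scenario in which the remaining $\Omega(n)$ vertices sit in many medium-sized components; this is where sprinkling enters, and it needs to be set up so that the event being ruled out is genuinely of probability $e^{-\Omega(n)}$. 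Your sprinkling paragraph is phrased for components that are already known to have size $\Omega(n)$, but your concentration step has not produced those, only components of size $> K$. So the proposal, as written, would need the concentration step reorganized along these lines; the per-vertex Azuma plus union bound cannot deliver the exponential rate.
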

We now state a few elementary facts about $\G_{n,\bar d}$ and $\mathcal{P}_{n,\bar d}$ and their relations (see e.g$.$ \cite{wormald} Theorem 2.6). We say that a set of variables $X_i^{(n)}$ for $i$ in some finite set $I$, defined on
a sequence of probability spaces indexed by $n$ are \emph{asymptotically independent Poisson} with means $\lambda_i$ if their joint distribution tends to that of independent
Poisson variables whose means are fixed numbers $\lambda_i$. When $\bar d=(d,d,\ldots,d)$ is the fixed sequence, we write  $\mathcal{G}_{n,d}$ and  $\mathcal{P}_{n,d}$ instead of   $\mathcal{G}_{n,\bar
d}$ and $\mathcal{P}_{n,\bar d}$, respectively.

\begin{theorem}
\label{thm: cycledist}
\begin{itemize}
\item[(i)]
Let $d \ge 3$. Let $X_i^{(n)}$ be the number of cycles of length $i$ in  $G_{n,d} \sim \mathcal{P}_{n,d}$. Then $X_1^{(n)},\ldots, X_k^{(n)}$ are asymptotically independent Poisson with means $\lambda_i:=(d-1)^{i}/2i$, for all  $k \ge 1$. \item[(ii)]
Let $\bar d=(d_1,\ldots,d_n)$ be a degree sequence with $d_1 \le d$. Let $Y_i^{(n)}$ be the number of cycles
of length $i$ in $G_{n,\bar d} \sim \mathcal{P}_{n,\bar d}$. Then for all $n$ and  $k \ge 1$, the joint distribution of $Y_1^{(n)},\ldots,Y_k^{(n)}$ is stochastically dominated by that of  $X_1^{(n)},\ldots, X_k^{(n)}$ from part (i).
\item[(iii)] Let   $G_{n,\bar d} \sim \mathcal{P}_{n,\bar d}$. Let $Y_i^{(n)}$ be as in (ii). Given $Y_1=Y_2=0$, we have that $G_{n,\bar d} \sim \mathcal{G}_{n,\bar d}$. 
\end{itemize}
\end{theorem}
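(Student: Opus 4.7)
I outline the three parts in increasing order of difficulty.

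For (iii), I would argue that given any simple graph $G$ on $[n]$ with degree sequence $\bar d$, exactly $\prod_{i=1}^{n} d_i!$ matchings of half-edges in $\mathcal{P}_{n,\bar d}$ produce $G$---one for each ordering at vertex $i$ consistent with $G$'s incident edges. Since this count is independent of $G$, the conditional distribution of $\mathcal{P}_{n,\bar d}$ given simplicity is uniform on simple graphs with degree sequence $\bar d$, which is precisely $\mathcal{G}_{n,\bar d}$. Finally, a multigraph produced by the configuration model is simple iff $Y_1=0$ (no loops) and $Y_2=0$ (no multi-edges), completing the argument.

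For (i), I would apply the Poisson method of joint factorial moments. Fix $k$ and non-negative integers $r_1,\ldots,r_k$, and let $m=\sum_j j\,r_j$. The joint factorial moment $\mathbf{E}\!\left[\prod_{j=1}^{k}(X_j^{(n)})_{r_j}\right]$ counts expected ordered tuples of distinct cycles of the specified lengths. Splitting by whether the tuple's cycles are vertex-disjoint, the dominant contribution comes from disjoint tuples: the probability of simultaneous presence of such a tuple is $[d(d-1)]^{m}(dn-2m-1)!!/(dn-1)!! = (d-1)^{m}n^{-m}(1+O(n^{-1}))$, while the number of such tuples is $\frac{n!}{(n-m)!}\prod_j (2j)^{-r_j}$. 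Multiplying and taking $n\to\infty$ yields $\prod_j \lambda_j^{r_j}$; contributions from cycle tuples sharing a vertex are $O(n^{-1})$ because each shared vertex removes an $n$-factor. The Poisson method of moments then delivers the claimed joint convergence to independent Poissons.

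For (ii), I would construct an explicit coupling: sample $G^*\sim\mathcal{P}_{n,d}$, designate $d-d_i$ of vertex $i$'s half-edges as ``spare'' and the remaining $d_i$ as ``primary'', delete all edges of $G^*$ incident to a spare half-edge, and then uniformly rematch the primary half-edges freed by this deletion. A direct check of marginals shows that the resulting graph $G$ has distribution $\mathcal{P}_{n,\bar d}$. The technical obstacle---and in my view the main difficulty of the theorem---is showing that the cycle counts in $G$ are dominated by those in $G^*$: any cycle in $G$ using only preserved (non-rematched) edges is automatically a cycle of $G^*$, but cycles created by the random rematching step require a separate bookkeeping argument to rule out a net increase. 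A safer fallback is to verify the factorial-moment inequalities $\mathbf{E}\!\left[\prod_j (Y_j^{(n)})_{r_j}\right]\le \mathbf{E}\!\left[\prod_j (X_j^{(n)})_{r_j}\right]$ directly from the formula $\Pr[C\subset G]=\prod_s d_{v_s}(d_{v_s}-1)/\prod_{\ell=0}^{|C|-1}(M-2\ell)$ with $M=\sum d_i$, using $d_v(d_v-1)\le d(d-1)$ vertex-by-vertex together with $M\le dn$; combined with part (i), this yields the stochastic domination between the Poisson-limit joint distributions, which is sufficient for the application in Section~\ref{sec: expanders}.
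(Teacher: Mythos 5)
Your parts (i) and (iii) are correct and follow the standard arguments (the paper itself offers no proof of this theorem; it is quoted as a known fact with a pointer to Wormald's survey, so there is nothing to compare against beyond the literature): for (iii) the count of $\prod_i d_i!$ configurations per simple graph is exactly right, and for (i) your joint factorial-moment computation, with vertex-disjoint tuples dominating and overlapping tuples contributing $O(n^{-1})$, is the classical Bollob\'as--Wormald argument (the minor wrinkles at $j=1,2$ for loops and double edges work out with your counting conventions).

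Part (ii) is where the genuine gap lies. The coupling you propose does produce $\mathcal{P}_{n,\bar d}$ marginally, but, as you concede, it does not control the cycles created by the rematching step, so it proves nothing as it stands. More importantly, your fallback is flawed in its key inequality: for a fixed cycle $C$ on vertices $v_1,\ldots,v_j$ one has $\Pr[C\subset G]=\prod_s d_{v_s}(d_{v_s}-1)\big/\prod_{\ell=0}^{j-1}(M-2\ell-1)$ (note the $-1$'s), and the bound $M\le dn$ goes in the \emph{wrong} direction: a smaller $M$ makes this probability \emph{larger}, so ``$d_v(d_v-1)\le d(d-1)$ together with $M\le dn$'' does not give $\Pr_{\bar d}[C]\le\Pr_{d\text{-reg}}[C]$, and indeed the per-cycle inequality is false in general (take a cycle through degree-$d$ vertices in a sequence padded with many low-degree vertices). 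The comparison only holds after summing over cycles, via the observation that $\sum_i d_i(d_i-1)/\sum_i d_i$ is a weighted average of $d_i-1$ and hence at most $d-1$; this is what makes the factorial moments (and the limiting Poisson means) dominated. Even then, factorial-moment inequalities give you domination of the limiting joint distributions, not the finite-$n$ multivariate stochastic domination asserted in the statement (``for all $n$''), so your fallback proves a weaker statement --- one that does suffice for the way the theorem is used in Section 5 (the expected number of vertices on cycles of length at most $k+2$ is $O(1)$, plus part (iii)), but not the theorem as stated.
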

\subsection{Proof of Theorem \ref{thm : randomgraphs}}
\emph{Proof:}
Fix some odd $k \ge 15$ to be determined later. Let $G=([n],E) \sim \mathcal{G}_{n,\bar d} $ for some degree sequence $\bar d:=(d_1,\ldots,d_n) $ with $d_1 \le d$.  We first argue that  with probability $1-\exp(-\Omega(n))$ there are less than $\frac{n}{4d^{k+2}}$ vertices belonging to a cycle of length at most $k+2$ in $G$. Call this event $F$. By part (iii) of Theorem \ref{thm: cycledist} it suffices to show that the same holds for $H' \sim \mathcal{P}_{n,\bar d}  $.
By Theorem \ref{thm: cycledist} the expected number of vertices belonging to a cycle of length at most $k+2$ is $O(1)$. Consider the exposure process of $H'$ in which the half edges are exposed one at a time. Consider the Doob's martingale of the number of vertices belonging to a cycle of length at most $k+2$ w.r.t.~this exposure procedure. The increments of this martingale can be bounded by $d^{k+2}$. Indeed, using the description of the configuration model, the increments can be bounded by the maximal change in the number of vertices belonging to a cycle of length at most $k+2$ possible by the operation of  adding a single edge to a graph with degree sequence $\bar d$. Indeed, to control the increments,  after exposing a certain edge by matching together its two half-edges, we can now increase the number of half-edges at the corresponding vertices by one (or 2 if the exposed edge was a self loop) and the distribution of the remaining edges will remain unaffected. This idea can be used for bounding the increments both from above and below (we leave the details as an exercise).  Hence, using Azuma inequality we obtain the desired bound. 

On the event $F$, there exist a constant $c=c(d)$ and a set $I \subset [n]$ with $|I| \ge cn$ such that every two vertices in $I$ are of distance at least $15$ from one another and the ball of radius $k+2$ around each of them does not contain a cycle.  Thus on the event $F$, there are two sets $A,B$  both of size  $\lceil cn \rceil$ such that every vertex in $A$ is adjacent to exactly one vertex in $B$ (and vice versa) and the ball of radius $k+1$ around every vertex in $A \cup B$ does not contain a cycle. 

Assuming the event $F$ occurs, fix such sets
$A,B$. Consider the  graph $G'$ obtained by removing from $G$ all the edges between $A$ and $B$. Denote the set of deleted edges by $M$. Note that given $G'$ and the set $D:=A \cup B$, the conditional distribution of $M$ is uniformly distributed among all perfect matchings of the set $D$ (we assume $G,A$ and $B$ are unknown and only $G'$ and $D=A \cup B$ were exposed).

Consider the  vertex-induced random subgraph of $G'$ (induced w.r.t$.$ $G'$) on the vertex set $$\{u \in [n] \setminus D: u \in T_3(G')\} \cup \{u \in D: u \in T_2(G') \}, \text{ denoted by $R$}.$$ We use the same source of randomness to generate the ages of the vertices in $T_3(G)$ and in $T_3(G')$. Note that for $u \in [n] \setminus D$, we have that $ u \in T_3(G')$ iff $ u \in T_3(G)$ as  the set of neighbors of $u$ in $G$ is the same as the set of its neighbors in $G'$. Also, note that since for every $u \in D$ the set of neighbors of $u$ in $G'$ is contained in the set of its neighbors in $G$ and is smaller by one neighbor. Hence if $u \in T_2(G')$, then it must be the case that $u \in T_3(G)$.

Consider the following (random) set of edges $M':=\{\{v,u\} \in M: v,u \in R \}$. The requirement $\{v,u\} \in M$ implies that $v,u \in D$ and thus the requirement that $v,u \in R$ implies that they belong to $T_2(G')$.
Whence $R \cup M' \subset T_3(G)$, where by $R \cup M'$ we mean the graph obtained by adding the edges of $M'$ to $R$. Hence it suffices to show that $R \cup M'$ has a giant component with probability
$1-\exp(-\Omega(n))$. 

For every $u \in D$ let $C_u'$ be the intersection of the connected component of $u$ in $R$ with the set $\{v:d_{G'}(u,v) \le k\}$.  Notice that for every $u \in D$, we have that $C_u'$
 depends only on the ages of
the vertices of distance at most $k+1$ from $u$ (distance w.r.t$.$ the graph $G'$). Since (by construction) up to a distance
of $k+1$ from any $u \in D$ the graph looks  like a tree, by Lemma \ref{lem: auxforsec6} there exist some constants $\bar c>0$ and $a>1$ (both independent of $k$) such that every $u \in D$ satisfies $|C_u'| \ge a^k$ with probability at least $\bar cd^{-1}$. We call a vertex $u \in D$ which satisfies  $|C_u'| \ge a^k$ \emph{good}. The occurrence or non-occurrence of the event that a vertex $v \in D$ is \emph{good} can affect the conditional probability of the event that $u \in D$ is \emph{good} only for those $u \in D$ such that $d_{G'}(v,u) \le 3k$. Denote the set of good vertices by $Good$ ($ \subset D$). Let $D( \ell)$ be the event that $|Good| \ge \ell$. By the assumption that the bound on the maximal degree, $d$, is a constant, Azuma inequality (Theorem \ref{thm: azuma}) implies that for some constant $c'=c'(d)>0$ (independent of $k$)
\begin{equation}
\label{eq: D(k)}
\Pr[D(c'n) \mid F ] \ge 1-\exp[-\Omega(n)].
\end{equation}
On the event $D(c'n) \cap F$ we have a collection of at most $n/a^{k}$  connected components of $R$ each of size at least $a^{k}$ and the union of these connected components contains at least $c'n$ vertices of $D$. If a connected component is of size greater than $a^{k}$, then in what comes we artificially treat it as several distinct connected components each of size between $a^k$ and $2a^k$ by partitioning it in a arbitrary manner into (disjoint) sets of sizes between $a^k$ and $2a^k$. Call the collection of all ``large" components of $R$ we obtain in this manner $L$ (where we say a component is ``large" if its size is between $a^k$ and $2a^k$).
Note that
\begin{equation}
\label{eq: sizeofL}
c'na^{-k} \le |L| \le na^{-k}.
\end{equation}

Construct now an auxiliary (random) multigraph $H$ with two sets of vertices,
$U_1$ and $U_2$. Every component in $L$ serves as a vertex
in $U_1$, and the number of \emph{good} $D$ vertices in a component serves as the number
of half-edges of the corresponding $U_1$-vertex (to avoid confusion we shall
refer to the vertices in $U_1$ as super-vertices (following the terminology of \cite{feige2013layers}). Hence, on $D(c'n) \cap F$, we have that $c'n/a^k \le |U_1| \le n/a^{k}$
and as every good vertex in $D$ within a component in $L$ contributes
one edge to the degree of the super-vertex of $H$ corresponding to its component in $T_3(G')$, we get that
\begin{equation}
\label{eq: avedegU1}
\sum_{u \in U_1} d_u \ge c'n.
\end{equation}
 The set $U_2$ consists of all
the non-good vertices in $D$. Every vertex in $U_2$ has degree $1$.

Consider now the configuration model for generating random multigraphs with vertex
set and number of half-edges for each vertex as described above and call
the resulting random multigraph $H$. The distribution of $G$ given $G'$ and $D$ (conditioned on the event $D(c'n) \cap F$ whose probability is $1-\exp(-\Omega(n))$) can be coupled with $H$ in a natural manner. Since the number of vertices of $H$ (on the event $D(c'n) \cap F$) is $\Omega (n)$, if $H$ contains a giant component
then so does $T_3(G)$. Namely, a giant component
in $H$ of size at least $ d\alpha n$ implies a giant component in $T_3(G)$ of size at least $\alpha n$.

To determine that $H$ is likely
to have a giant component we use Theorem \ref{thm:MolloyReed}. We first need the following proposition taken from \cite{feige2013layers}.
\begin{proposition}
\label{pro:equal}
Consider two vertices of degree $r$ and $r' \ge r + 2$. Then the expression
$\sum_{i > 1} \alpha_i d_i(d_i - 2) > 0$ decreases by replacing them by vertices
of degrees $r + 1$ and $r' - 1$.
\end{proposition}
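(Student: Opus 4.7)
The plan is to verify the proposition by a single elementary algebraic computation, since the statement is essentially a convexity/smoothing claim for the function $g(i) := i(i-2)$ applied to the degree sequence. Multiplying through by $n$ (the number of vertices), the quantity $\sum_{i>1}\alpha_i\, i(i-2)$ is a positive constant multiple of $\Phi(\bar d) := \sum_{v \in [n]} d_v(d_v-2)$, so it suffices to compute how $\Phi$ changes under the described replacement.

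First I would write the change explicitly. Replacing a vertex of degree $r$ by one of degree $r+1$ alters the corresponding summand by $(r+1)(r-1) - r(r-2) = 2r-1$, and replacing a vertex of degree $r'$ by one of degree $r'-1$ alters its summand by $(r'-1)(r'-3) - r'(r'-2) = 3-2r'$. Adding these contributions,
\begin{equation*}
\Phi(\bar d_{\text{new}}) - \Phi(\bar d) \;=\; (2r-1) + (3-2r') \;=\; 2(r - r' + 1).
\end{equation*}

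Next I would invoke the hypothesis $r' \ge r + 2$, which gives $r - r' + 1 \le -1$, so the change is at most $-2 < 0$. Dividing by $n$ yields the claimed strict decrease of $\sum_{i>1}\alpha_i\, i(i-2)$. Note the restriction $i>1$ in the sum is harmless here, because under the hypotheses of Theorem \ref{thm : randomgraphs} all degrees are at least $3$, and more generally $g(0)=g(2)=0$ and $g(1)=-1$ so one could remove the restriction at the cost of only a harmless shift.

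There is no real obstacle: the proposition is a one-line identity combined with the inequality $r' \ge r+2$. The only thing to be careful about is the bookkeeping of which summand is changing and by how much, so I would write out the two differences $(r+1)(r-1)-r(r-2)$ and $(r'-1)(r'-3)-r'(r'-2)$ separately to avoid sign errors, and then conclude.
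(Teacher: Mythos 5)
Your computation is correct and is essentially the paper's own argument: the paper likewise compares $r(r-2)+r'(r'-2)$ with $(r+1)(r-1)+(r'-1)(r'-3)$ and finds a decrease of $2(r'-r-1)\ge 2$, which matches your $2(r-r'+1)\le -2$. No further comment needed.
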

\begin{proof}
Initially the contribution of the two vertices is $r(r-2) + r'(r'-2)$. After
replacement it is $(r+1)(r-1) + (r'-1)(r'-3)$, which is smaller by $2(r'
- r - 1)$.
\end{proof}
Observe that on $D(c'n) \cap F$, by (\ref{eq: sizeofL}) and (\ref{eq: avedegU1}) the average degree of a super-vertex in $U_1$ is at least $c_1 a^{k}$ and the fraction of $U_1$ vertices in $H$ is at least $c_2 a^{-k}$ for some constants $c_1,c_{2}>0$ independent of $k$. Let $\ell(k):= \lfloor c_1 a^k \rfloor$. We denote the degree sequence of $H=(V',E')$ by $d_1(H),\ldots,d_{|V'|}(H)$. The above proposition, together with the fact that the expression $\sum_{i \ge 0}\alpha_i d_i(H) (d_i(H)-2)$ is a monotone function of the $d_i(H)$'s, implies that for the graph $H$, the expression $\sum_{i
\ge 0}\alpha_i d_i(H) (d_i(H)-2)$ is minimized (on $D(c'n) \cap F$) when all vertices of $U_1$ are of degree $\ell(k)$. The fraction of $U_2$ vertices of $H$ can be  bounded from above by 1.   The Molloy-Reed condition is indeed satisfied whenever $k$ is sufficiently large, as
$$  \sum_{i
\ge 0}\alpha_i d_i(H) (d_i(H)-2) \ge 1 \cdot (-1)+ c_2 a^{-k} \ell(k) (\ell(k)-2)>\epsilon, $$  for some constant $\epsilon>0$. So if we fix such $k$ we have that the Molloy-Reed condition indeed holds for $H$.
\qed
\subsection{Possible extensions}
We now discuss some
technicalities related to possible relaxations of the assumptions on the degree sequence in Theorem \ref{thm : randomgraphs}. We then discuss the connection of Theorem \ref{thm : randomgraphs} (and the aforementioned relaxations) to the analysis of $T_3(G(n,c/n))$.
We end by discussing an alternative approach that can cover the more general case of large girth expanders and some of the difficulties related to that approach.

The assumption in Theorem \ref{thm : randomgraphs} that the minimal degree
is at least 3 could be relaxed to allow degree 2 vertices and is present
mostly for convenience. To see this, observe that the existence of degree
2 vertices (as long as we also have some fraction of vertices of strictly
larger degree) in our random graphs setup, would still allow one to get a
parallel version of Lemma \ref{lem: auxforsec6} with smaller $a>1$ and larger
$k$. The assumption that there are no degree 1 vertices is more substantial,
though with some work the proof can be adapted to the situation where the
fraction of such vertices is sufficiently small, although this would complicate
manners substantially. The difficulty would then be showing that a parallel
statement to Lemma \ref{lem: auxforsec6} still holds in such a setup. 

Hence
it is reasonable to expect that if $c>1$ is sufficiently large
and $G \sim G(n,\frac{c}{n})$ (i.e$.$ $G$ is an Erd\H os-R\'enyi random graph
with parameter $c/n$), then $T_3(G)$ has a giant component w.h.p$.$. Note
that $G$ would not be of bounded degree, but this turns out to not be a major difficulty (for sufficiently large $d=d(c)$
 and $k=k(c) \in \N$, the number of vertices which would not have a vertex
of degree more than $d$ in a ball of radius $k$ around them in $G$ is w.h.p$.$
linear in $n$). We know that when $c-1$ is positive but sufficiently small, then w.h.p$.$
all components of $T_3(G)$ are of size $O(\log n)$. Whence one should expect
a phase transition with respect to $c$ for the existence of a giant component
in $T_3(G)$. It is left as an open problem to verify this and to find the
critical $c$. We note that a-priori it is not clear that there is monotonicity
w.r.t$.$ $c$ for the existence of a giant component w.h.p$.$ in $T_3(G)$,
but we believe this is indeed the case.
\begin{question}
Let $G \sim G(n,c/n)$. What is the critical $c$ for the existence of a giant component in $T_3(G)$?
\end{question}

\section*{Acknowledgements}
The author would like to thank Itai Benjamini, Uriel Feige, Daniel Reichman  and Allan Sly for many useful discussions.

\bibliographystyle{plain}
\bibliography{Layersproject}

\begin{thebibliography}{10}

\bibitem{benjamini2011critical}
Itai Benjamini, Asaf Nachmias, and Yuval Peres.
\newblock Is the critical percolation probability local?
\newblock {\em Probability theory and related fields}, 149(1-2):261--269, 2011.

\bibitem{benjamini1998unpredictable}
Itai Benjamini, Robin Pemantle, and Yuval Peres.
\newblock Unpredictable paths and percolation.
\newblock {\em The Annals of Probability}, 26(3):1198--1211, 1998.

\bibitem{cox1983oriented}
J~Theodore Cox and Richard Durrett.
\newblock Oriented percolation in dimensions $d \ge 4$: bounds and asymptotic
  formulas.
\newblock {\em Math. Proc. Cambridge Philos. Soc}, 93(1):151--162, 1983.

\bibitem{feige2013layers}
Uriel Feige, Jonathan Hermon, and Daniel Reichman.
\newblock On giant components and treewidth in the layers model.
\newblock {\em Random Structures and Algorithms}, 48:524–--545.

\bibitem{feige2013recoverable}
Uriel Feige and Daniel Reichman.
\newblock Recoverable values for independent sets.
\newblock {\em Random Structures \& Algorithms}, 2013.

\bibitem{feller2008introduction}
Willliam Feller.
\newblock {\em An introduction to probability theory and its applications},
  volume~2.
\newblock John Wiley \& Sons, 2008.

\bibitem{Fount}
Nikolaos Fountoulakis.
\newblock Percolation on sparse random graphs with given degree sequence.
\newblock {\em Internet Mathematics}, 4(4):329--356, 2007.

\bibitem{grimmett1999percolation}
Geoffrey Grimmett.
\newblock {\em What is Percolation?}
\newblock Springer, 1999.

\bibitem{Janson}
Svante Janson.
\newblock On percolation in random graphs with given vertex degrees.
\newblock Technical report, Department of Mathematics, Uppsala University,
  2008.

\bibitem{KSud}
Michael Krivelevich and Benny Sudakov.
\newblock The phase transition in random graphs: A simple proof.
\newblock {\em Random Structures \& Algorithms}, 2012.

\bibitem{lyons1989ising}
Russell Lyons.
\newblock The ising model and percolation on trees and tree-like graphs.
\newblock {\em Communications in Mathematical Physics}, 125(2):337--353.

\bibitem{lyons1992random}
Russell Lyons.
\newblock Random walks, capacity and percolation on trees.
\newblock {\em The Annals of Probability}, pages 2043--2088, 1992.

\bibitem{lyons2005probability}
Russell Lyons and Yuval Peres.
\newblock {\em Probability on trees and networks}, volume~42.
\newblock Cambridge University Press, 2016.

\bibitem{milgrom1982theory}
Paul~R Milgrom and Robert~J Weber.
\newblock A theory of auctions and competitive bidding.
\newblock {\em Econometrica: Journal of the Econometric Society}, pages
  1089--1122, 1982.

\bibitem{Molloy}
Michael Molloy and Bruce Reed.
\newblock A critical point for random graphs with a given degree sequence.
\newblock {\em Random structures \& algorithms}, 6(2-3):161--180, 1995.

\bibitem{reichman2012new}
Daniel Reichman.
\newblock New bounds for contagious sets.
\newblock {\em Discrete Mathematics}, 312(10):1812--1814, 2012.

\bibitem{wormald}
Nicholas~C Wormald.
\newblock Models of random regular graphs.
\newblock {\em London Mathematical Society Lecture Note Series}, pages
  239--298, 1999.

\end{thebibliography}
\section{Appendix}
\label{sec: A}
In this section we state a few theorems that where used in this work. We start with Azuma inequality (see e.g$.$ Theorem 13.2 in \cite{lyons2005probability}).
\begin{lemma}[\textbf{Azuma inequality}] \label{thm: azuma}
Let $X_0,...,X_n$ be a martingale such that for every $1 \leq k <n$ it holds
that $|X_k-X_{k-1}|\leq c_k$. Then for every nonnegative integer $t$ and
real $B >0$
$$\Pr(X_t-X_0 \leq -B) \leq \exp\left(\frac{-B^2}{\sum_{i=1}^tc_i^2}\right).$$
\end{lemma}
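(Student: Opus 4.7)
The plan is to run the classical Chernoff/exponential-moment argument for martingales. First I would set $D_k := X_k - X_{k-1}$ so that, by the martingale property, $\mathbf{E}[D_k \mid \F_{k-1}]=0$, where $\F_k$ denotes the natural filtration of the process; by hypothesis $|D_k|\le c_k$ almost surely. After translating so that $X_0=0$, I would apply Markov's inequality to the nonnegative random variable $e^{-\lambda(X_t-X_0)}$ for an arbitrary $\lambda>0$:
\[
\Pr(X_t-X_0\le -B) \;=\; \Pr\!\bigl(e^{-\lambda(X_t-X_0)}\ge e^{\lambda B}\bigr) \;\le\; e^{-\lambda B}\,\mathbf{E}\!\bigl[e^{-\lambda(X_t-X_0)}\bigr].
\]

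The heart of the argument is Hoeffding's lemma: if $D$ satisfies $\mathbf{E}[D]=0$ and $|D|\le c$, then $\mathbf{E}[e^{sD}]\le e^{s^2c^2/2}$ for every real $s$. I would prove this by convexity, writing $D=\frac{c+D}{2c}\cdot c+\frac{c-D}{2c}\cdot(-c)$, applying convexity of $x\mapsto e^{sx}$, taking expectations (using $\mathbf{E}[D]=0$) to get $\cosh(sc)$, and concluding via the Taylor series comparison $\cosh(sc)\le e^{s^2c^2/2}$. Applied conditionally on $\F_{k-1}$, this yields $\mathbf{E}[e^{-\lambda D_k}\mid\F_{k-1}]\le e^{\lambda^2 c_k^2/2}$, a deterministic bound.

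Next I would iterate via the tower property: factor
\[
e^{-\lambda(X_t-X_0)}=\prod_{k=1}^{t}e^{-\lambda D_k},
\]
condition on $\F_{t-1}$ to peel off the last factor using the conditional Hoeffding bound, and induct on $t$ to obtain
\[
\mathbf{E}\!\bigl[e^{-\lambda(X_t-X_0)}\bigr]\;\le\;\exp\!\Bigl(\tfrac{\lambda^2}{2}\sum_{k=1}^{t}c_k^{2}\Bigr).
\]
Combining this with the Markov estimate gives
\[
\Pr(X_t-X_0\le -B)\;\le\;\exp\!\Bigl(-\lambda B+\tfrac{\lambda^2}{2}\sum_{k=1}^{t}c_k^{2}\Bigr),
\]
and optimizing in $\lambda$ by taking $\lambda=B/\sum_{k=1}^{t}c_k^{2}$ produces the stated tail bound.

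This is a textbook argument and there is no real obstacle; the only nontrivial ingredient is Hoeffding's lemma, while the martingale structure enters purely through the iterated conditional expectation in the tower-property step.
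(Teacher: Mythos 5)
The paper does not actually prove this lemma: it is quoted in the appendix as a standard result (with a pointer to Theorem 13.2 of the Lyons--Peres book), so there is no internal proof to compare against, and your exponential-moment/Hoeffding argument is exactly the standard proof one would supply. Your steps are correct: the conditional Hoeffding bound $\mathbf{E}[e^{-\lambda D_k}\mid \F_{k-1}]\le e^{\lambda^2 c_k^2/2}$ via convexity and $\cosh(sc)\le e^{s^2c^2/2}$, the tower-property iteration, and Markov's inequality are all fine.

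The one point to flag is the final sentence: optimizing $\exp\bigl(-\lambda B+\tfrac{\lambda^2}{2}\sum_{k\le t}c_k^2\bigr)$ at $\lambda=B/\sum_{k\le t}c_k^2$ gives
\[
\Pr(X_t-X_0\le -B)\;\le\;\exp\Bigl(\frac{-B^2}{2\sum_{i=1}^t c_i^2}\Bigr),
\]
with a factor $2$ in the denominator, not the bound $\exp\bigl(-B^2/\sum_{i=1}^t c_i^2\bigr)$ displayed in the lemma, so your argument does not ``produce the stated tail bound'' literally. The discrepancy is not a defect of your proof but a typo in the quoted statement: as written (without the $2$) the inequality is false, e.g.\ for $t=1$ and $X_1-X_0=\pm c$ with probability $\tfrac12$ each one has $\Pr(X_1-X_0\le -c)=\tfrac12>e^{-1}$. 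The version you derive is the correct standard Azuma--Hoeffding inequality, and the constant in the exponent is immaterial for the paper's only use of the lemma (exponential concentration of Doob martingales in Section 5), but you should state the conclusion with the factor $2$ rather than claim the displayed form.
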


The following Theorem is due to Paley and Zygmund (see e.g$.$ \cite{lyons2005probability} pg 162)
\begin{theorem}[\textbf{Paley and Zygmund's inequality}]
\label{thm: payleyzygmund}
If $X$ is a random variable with mean 1 and $0<t <1$, then $$\Pr(X >t) \ge \frac{(1-t)^{2}}{
 \mathbf{E}[X^2]}.$$
\end{theorem}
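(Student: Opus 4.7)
The plan is to derive this from the Cauchy--Schwarz inequality, in the standard two-line argument. The statement is intended for non-negative $X$ (this matches the use in the paper, where the role of $X$ is played by $Z_k$, a sum of nonnegative terms), so I would proceed under that hypothesis.

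First, split the expectation as
\[
\mathbf{E}[X] = \mathbf{E}[X \cdot 1_{\{X \le t\}}] + \mathbf{E}[X \cdot 1_{\{X > t\}}].
\]
Since $X \ge 0$, the first term is at most $t \cdot \Pr(X \le t) \le t$, and since $\mathbf{E}[X] = 1$ this gives the lower bound
\[
\mathbf{E}[X \cdot 1_{\{X > t\}}] \ge 1 - t.
\]

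Second, apply Cauchy--Schwarz to the random variables $X$ and $1_{\{X>t\}}$:
\[
\bigl(\mathbf{E}[X \cdot 1_{\{X > t\}}]\bigr)^2 \le \mathbf{E}[X^2] \cdot \mathbf{E}[1_{\{X>t\}}^2] = \mathbf{E}[X^2] \cdot \Pr(X > t).
\]
Rearranging and combining with the previous step yields
\[
\Pr(X > t) \ge \frac{\bigl(\mathbf{E}[X \cdot 1_{\{X > t\}}]\bigr)^2}{\mathbf{E}[X^2]} \ge \frac{(1-t)^2}{\mathbf{E}[X^2]},
\]
as desired.

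There is no real obstacle here; the only subtle point worth flagging is the implicit nonnegativity hypothesis that makes the bound $\mathbf{E}[X \cdot 1_{\{X \le t\}}] \le t$ valid. Without $X \ge 0$, this step fails in general. In all invocations of the inequality in the paper the relevant random variable is manifestly non-negative, so the statement as used causes no trouble.
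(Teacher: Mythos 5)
Your proof is correct and is the standard Cauchy--Schwarz derivation of Paley--Zygmund; the paper itself does not supply a proof but simply cites the result (to Lyons--Peres, page 162), so there is no argument in the paper to compare against. Your remark about the implicit nonnegativity hypothesis is apt: the inequality as stated fails for general real-valued $X$ with mean $1$ (e.g.\ $X$ taking values $-1$ and $3$ each with probability $1/2$ has $\mathbf{E}[X]=1$, $\mathbf{E}[X^2]=5$, and $\Pr(X>1/2)=1/2 < (1/2)^2/5$ is not violated here, but one can cook up violations with heavier negative tails), and indeed every invocation in the paper applies it to a manifestly nonnegative random variable ($Z_k$, $\bar Z_k$), so the implicit hypothesis is harmless.
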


Suppose that we have a countable (possibly finite) set of vertices, $S$,
and
a state space $\Omega:=\R^{S}$, with the product topology and the corresponding
Borel $\sigma$-algebra $\mathcal{B}$. We now define the notion of \emph{stochastic
domination}. Given $\omega_1,\omega_2 \in \Omega$, we say that $\omega_1
\leqslant \omega_2$, whenever $\omega_1(s)
\leqslant \omega_2(s)$ for any $s \in S$.
We say that a measurable function $f:\Omega \to \R$ is \emph{increasing},
whenever $\omega_1
\leqslant \omega_2 \in \Omega$ implies $f(\omega_1) \le f(\omega_2)$. When
we have real valued random variables $X=(X_{s} : s \in S)$, we sometimes
say an event is increasing with respect to $X$. By this we mean that the
indicator function of that event may be written as $f(X)$ for some increasing
function $f:\Omega \to \R$. We say it is \emph{decreasing} when $-f$ is increasing.
Given
two Borel probability measures on $\Omega$, $\mu$ and $\nu$, we say that
$\mu$ \emph{stochastically dominates} $\nu$, and denote this by $\mu \geqslant
\nu$, if for any continuous increasing function $f$ we have
 $$\int_{\Omega}fd\mu
\ge \int_{\Omega}fd\nu.$$
We say that an event is \emph{increasing} (respectively, \emph{decreasing})
if its indicator function is  \emph{increasing} (respectively, \emph{decreasing}).
It is not hard to prove that it suffices to restrict to the case that $f$
is an indicator of an increasing event. Moreover, by a simple limiting argument,
it suffices to consider increasing indicators of events that depends only
on finitely many co-ordinates. 

The following Theorem is due to Harris (1960). It is often referred to as
FKG inequality,
due to a generalization due to  Fortuin, Kasteleyn and Ginibre (1971). For
a proof, see e.g. Section 2.2. of $\cite{grimmett1999percolation}$.
\begin{theorem}[Harris/FKG inequality]
\label{thm: FKGinequality}
Suppose that we have a countable (possibly finite) set of vertices, $S$,
and a state space $\Omega=\R^{S}$. Assume we have independent $\R$ valued
random variables $X:=(X_{s} :s \in S)$. Let $f,g:\Omega \to \R$ be increasing,
then
$$\mathbf{E}[f(X)g(X)] \ge \mathbf{E}[f(X)]\mathbf{E}[g(X)].$$
The same holds when both $f$ and $g$ are decreasing. If $f$ is increasing
and $g$ is decreasing the inequality holds in the reverse direction.
\end{theorem}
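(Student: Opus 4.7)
The plan is to proceed by the classical induction on the number of coordinates after first reducing to functions depending on finitely many of the $X_s$. As the remarks preceding the statement already observe, it suffices to handle the case where $f, g$ are indicators of increasing cylinder events; the extension to general bounded increasing $f, g$ then follows from a standard monotone class / bounded convergence argument, and truncation handles unbounded $f, g$ lying in $L^2$.

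First, I would establish the base case $n = 1$. Given independent copies $X, X'$ of a single $\R$-valued random variable and increasing $f, g$, the pointwise inequality $(f(x) - f(y))(g(x) - g(y)) \ge 0$, upon expanding and taking the expectation over $(X, X')$, yields
$$2\big(\mathbf{E}[f(X)g(X)] - \mathbf{E}[f(X)]\mathbf{E}[g(X)]\big) = \mathbf{E}[(f(X) - f(X'))(g(X) - g(X'))] \ge 0.$$
For the induction step, assume the result for any collection of $n-1$ independent coordinates and consider $f, g$ on $\R^n$. Define
$$F(x) := \mathbf{E}[f(X_1, \ldots, X_{n-1}, x)], \qquad G(x) := \mathbf{E}[g(X_1, \ldots, X_{n-1}, x)].$$
Because $f(x_1, \ldots, x_{n-1}, \cdot)$ is increasing for each fixed $(x_1, \ldots, x_{n-1})$, independence of the $X_i$ forces $F$ (and similarly $G$) to be increasing in $x$. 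Applying the inductive hypothesis conditionally on $X_n$ gives $\mathbf{E}[fg \mid X_n] \ge F(X_n) G(X_n)$ almost surely, and invoking the base case for the two increasing functions $F, G$ of the single variable $X_n$ yields
$$\mathbf{E}[fg] \ge \mathbf{E}[F(X_n) G(X_n)] \ge \mathbf{E}[F(X_n)] \mathbf{E}[G(X_n)] = \mathbf{E}[f] \mathbf{E}[g].$$

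The decreasing-decreasing case follows by applying the increasing-increasing case to $-f, -g$, and the mixed case by applying it to $f, -g$ and negating. The main subtlety is the preservation of monotonicity of $F$ and $G$ in the induction step, which rests crucially on the independence of the $X_i$: if the coordinates were dependent, conditioning on $X_n = x$ would reshape the distribution of the remaining coordinates in an $x$-dependent way, and the conditional expectations would not in general be monotone. A secondary issue in the infinite-dimensional setting is establishing measurability and passing to the limit from cylindrical functions, which is handled by monotone convergence once one restricts to bounded $f, g$ and approximates by cylindrical conditional expectations $\mathbf{E}[f \mid \sigma(X_{s_1}, \ldots, X_{s_n})]$ (which remain increasing by the tower property and independence).
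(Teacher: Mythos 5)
Your proposal is correct: it is the classical Harris induction-on-coordinates argument (base case via the pointwise inequality $(f(x)-f(y))(g(x)-g(y))\ge 0$ for two i.i.d.\ copies, then conditioning on the last coordinate and using that the partial averages $F,G$ remain increasing by independence), together with the standard reduction to finitely many coordinates. The paper does not prove this theorem itself but quotes it and refers to Section 2.2 of Grimmett's \emph{Percolation}, which contains essentially this same proof, so your approach matches the paper's (referenced) argument.
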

We now present a more general Theorem we shall need. Let $k \in \N$. For
any $x,y \in \R^k$ denote by $x \wedge y$ and $x \vee y$ their coordinate-wise
minimum
and maximum, respectively. A set $S \subset \R^k$ is called a sub-lattice
if whenever $x,y \in S$ so are  $x \wedge y$ and $x \vee y$.
\begin{theorem}
\label{thm: affiliation}
Let $X:=(X_{1},\ldots,X_k)$ be independent real valued random variables.
Let $S \subset \R^k$ be a sub-lattice so that $\Pr[X \in S]>0$. Let $A,B$
be two increasing events with respect to $X$. Then,
\begin{equation}
\label{eq: affiliationinequality}
\Pr[A \cap B |X \in S] \ge \Pr[A |X \in S]\Pr[B |X \in S].
\end{equation}   
\end{theorem}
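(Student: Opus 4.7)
The plan is to reduce the claim to the classical Holley/Fortuin-Kasteleyn-Ginibre theorem by verifying that the conditional law of $X$ given $\{X \in S\}$ satisfies the FKG lattice condition.

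First, I would reduce to the discrete setting. Each $X_i$ can be approximated by a discrete random variable $X_i^{(n)}$ supported on a finite grid (e.g.\ dyadic rationals truncated to $[-n,n]$), with the $X_i^{(n)}$ still independent. A sub-lattice $S \subset \R^k$ restricts to a sub-lattice of the finite product grid, and any increasing indicator of a cylinder event can be approximated by indicators of events measurable with respect to the discretized coordinates. So it suffices to prove \eqref{eq: affiliationinequality} when each $X_i$ takes values in a finite totally ordered set and $S$ is a finite sub-lattice of the product.

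In the finite case, let $\mu(x) := \Pr[X = x]$ and let $\nu(x) := \mu(x) 1_{\{x \in S\}} / \mu(S)$ be the conditional law. The Holley/FKG theorem asserts that if a strictly positive probability measure $\nu$ on a finite distributive lattice satisfies
\[
\nu(x \wedge y) \, \nu(x \vee y) \ge \nu(x) \, \nu(y) \quad \text{for all } x,y,
\]
then all pairs of increasing events are positively correlated under $\nu$. I would verify this lattice condition for $\nu$. Because $\mu$ is a product measure, $\mu(x) = \prod_i \mu_i(x_i)$, and for every coordinate $\mu_i(x_i \wedge y_i)\mu_i(x_i \vee y_i) = \mu_i(x_i)\mu_i(y_i)$ (the pair $\{x_i \wedge y_i, x_i \vee y_i\}$ is simply $\{x_i, y_i\}$). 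Multiplying across coordinates gives $\mu(x \wedge y)\mu(x \vee y) = \mu(x)\mu(y)$. Since $S$ is a sub-lattice, whenever $x,y \in S$ both $x \wedge y, x \vee y \in S$, hence
\[
1_{\{x \wedge y \in S\}} 1_{\{x \vee y \in S\}} \ge 1_{\{x \in S\}} 1_{\{y \in S\}}.
\]
Multiplying the two displays and dividing by $\mu(S)^2$ yields $\nu(x \wedge y)\nu(x \vee y) \ge \nu(x)\nu(y)$, which is the lattice condition. (If $\nu$ is not strictly positive, restrict to its support, which is again a sub-lattice.) Applying Holley/FKG to the indicators of $A$ and $B$ gives \eqref{eq: affiliationinequality} in the discrete case; passing to the limit then gives the continuous version.

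The only place requiring genuine care is the approximation step, since FKG-type inequalities for continuous distributions must be justified by monotone/bounded convergence after discretization, and one must ensure that the events $A$, $B$ and the sub-lattice $S$ all behave well under the chosen discretization. This is the main technical obstacle, but it is routine: cylinder approximations of increasing events remain increasing, and sub-lattice structure is preserved by coordinatewise discretization, so both sides of \eqref{eq: affiliationinequality} converge to the desired continuous quantities.
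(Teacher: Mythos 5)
Your proof takes a genuinely different route from the paper's. The paper's proof (in the paragraph following the theorem) simply cites the Milgrom–Weber affiliation theorem from \cite{milgrom1982theory}: a joint \emph{density} $f$ satisfying $f(x\vee y)f(x\wedge y)\ge f(x)f(y)$ a.e.\ is affiliated in the sense of (\ref{eq: affiliationinequality}), and a product of univariate densities satisfies this trivially. You instead discretize each coordinate, then apply the Holley/FKG lattice condition to the \emph{conditioned} discrete measure $\nu(x)=\mu(x)1_S(x)/\mu(S)$, verifying the lattice inequality directly from the two ingredients: $\mu(x\wedge y)\mu(x\vee y)=\mu(x)\mu(y)$ for product measures, and $1_S(x\wedge y)1_S(x\vee y)\ge 1_S(x)1_S(y)$ from the sub-lattice property. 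Both proofs rest on the same algebraic core — log-supermodularity of the conditional law via product structure of $\mu$ plus lattice structure of $S$ — but your version is self-contained modulo Holley/FKG, and it does not implicitly assume that the $X_i$ admit densities (which the paper's cited argument does, even though the theorem statement is more general). What you pay for this is the limiting step, which you flag as "routine" but which is really where the remaining work lies: you must check that a coordinatewise monotone discretization $\phi_n$ maps $S$ to a sub-lattice (this does hold, since $\phi_n$ commutes with $\wedge,\vee$), that $\Pr[\phi_n(X)\in\phi_n(S)]\to\Pr[X\in S]$ (not automatic for arbitrary Borel sub-lattices; it needs some regularity of $S$ or of the laws of the $X_i$, both of which hold in the paper's application where the ages are $\mathrm{Uniform}[0,1]$ and $S$ is cut out by finitely many coordinate inequalities), and that the increasing events $A,B$ are approximated by increasing events on the grid in a way that makes the conditional probabilities converge. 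A complete proof would spell these out, but the skeleton is correct and arguably more robust than the paper's citation-based argument.
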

Random variables that satisfy (\ref{eq: affiliationinequality}) are called
\emph{affiliated}. It is well-known (e.g.\ the appendix of $\cite{milgrom1982theory}$)
that if $(X_{1},\ldots,X_k)$ have a joint density function $f$ which satisfies
\begin{equation}
\label{eq: affiliateddensityfunc}
f(x \vee y)f(x \wedge y) \ge f(x)f(y),
\end{equation}
 for  Lebesgue $\mathrm{a.e.}$ $(x,y) \in \R^{2k}$, then $(X_{1},\ldots,X_k)$ are \emph{affiliated}.
We call such a non-negative function satisfying (\ref{eq: affiliateddensityfunc})
 $\emph{affiliated}$. It is easy to verify that if $f(z_1,z_2)=g(z_1)h(z_2)$
and $g$ and $h$ are \emph{affiliated}, then so is $f$. Since any non-negative
$g : \R \to \R$ is trivially affiliated and a joint density function of independent
random variables factors to a product of the marginal densities, we indeed
get that independent random variables are always \emph{affiliated} as stated
in Theorem \ref{thm: affiliation}. 

\end{document}